\documentclass[11pt]{article}
\usepackage[margin=1in]{geometry}
\usepackage{amsmath, amssymb, amsthm}
\usepackage[style=trad-abbrv, doi=false, url=false, isbn=false, eprint=true, sortcites=true
]{biblatex}
\usepackage[dvipsnames]{xcolor}
\usepackage{booktabs}
\usepackage{enumitem}
\usepackage{hyperref}
\usepackage{graphicx}
\usepackage{multirow}
\newtheorem{theorem}{Theorem}[section]
\newtheorem{proposition}[theorem]{Proposition}
\newtheorem{lemma}[theorem]{Lemma}
\newtheorem{corollary}[theorem]{Corollary}
\newtheorem*{thm}{Theorem}

\newtheorem{remark}[theorem]{Remark}

\theoremstyle{definition}

\newtheorem{example}[theorem]{Example}
\newtheorem*{assumption}{Assumption}

\newcommand{\qm}[1]{\begin{bmatrix}#1\end{bmatrix}}
\newcommand{\ab}[1]{\left\langle#1\right\rangle}
\newcommand{\litab}[1]{\langle#1\rangle}

\newcommand{\rb}[1]{\left(#1\right)}

\newcommand{\cb}[1]{\left\{#1\right\}}
\newcommand{\vb}[1]{\left\vert#1\right\vert}

\newcommand{\C}{\mathbb{C}}

\newcommand{\Acal}{\mathcal{A}}
\newcommand{\Bcal}{\mathcal{B}}
\newcommand{\Xcal}{\mathcal{X}}

\newcommand{\ran}{\text{\rm range}}
\newcommand{\diag}{\mathrm{diag}}
\newcommand{\rank}{\mathrm{rank}}
\newcommand{\norm}[1]{\left\lVert #1\right\rVert}
\newcommand{\litnorm}[1]{\lVert#1\rVert}
\newcommand{\wn}{\omega_n}

\begin{document}

\title{Dynamics-Aware Sampling and Recovery for Tensor-Valued Dynamical Systems}
\author{Seok-Young Chung
\footnote{sychung@msu.edu. Department of Mathematics, Michigan State University, East Lansing, MI 48824 USA}
\and Longxiu Huang
\footnote{huangl3@msu.edu. Department of Computational Mathematics, Science and Engineering, Department of Mathematics, Michigan State University, East Lansing, MI 48824 USA}
\and Yisen Wang
\footnote{wangyise@msu.edu. Department of Computational Mathematics, Science and Engineering, Michigan State University, East Lansing, MI 48824 USA}
}
\date{}

\maketitle

\begin{abstract}
We study sampling and recovery for tensor-valued dynamical systems evolving under the t-product. Given an unknown initial tensor state, the goal is to recover it from partial spatial samples collected over multiple time instances. Although the t-product Fourier transform formally decomposes the evolution into slicewise matrix dynamics, we show that this is not the effective recovery decomposition. When eigenvalues are shared across multiple Fourier-domain frontal slices, the corresponding eigenspaces couple into aggregated spectral blocks. These blocks, rather than the individual slices, determine the tensor-specific recovery geometry and the associated space--time tradeoff.

We prove deterministic identifiability results for recovering the initial tensor state from spatio-temporal samples. In the diagonalizable case, recovery is characterized by frame conditions on the aggregated eigenspace blocks, and the analysis is extended to non-diagonalizable systems through generalized eigenspace conditions associated with Jordan structure. We also identify the finite-horizon stacked recovery system as the computational object behind Tensor Dynamical Sampling (DS) and relate it to a decoupled two-stage formulation once the temporal threshold is reached. Under independent spatial subsampling, we obtain dynamics-aware randomized recovery guarantees by analyzing empirical Gram matrices on the aggregated blocks. The resulting bounds depend on intrinsic spectral quantities, including aggregated multiplicities, incoherence, and population conditioning. We then establish stability estimates that separate the spatial frame factor from the temporal Vandermonde factor, showing that temporal oversampling can improve identifiability while worsening numerical conditioning.

Finally, we implement the resulting Tensor DS reconstruction framework and validate the theory through synthetic and real-data experiments. Phase transition experiments track the predicted blockwise sampling thresholds, while stability experiments illustrate the Vandermonde conditioning mechanism. Experiments on sea surface temperature and turbulence velocity data show that a learned t-product system operator gives more accurate and structurally faithful reconstructions than vectorized low-rank, Tucker-type, graph-regularized, and graph-based dynamical sampling baselines.
\end{abstract}

\noindent\textbf{Keywords}: Tensor dynamical sampling; t-product; aggregated eigenspaces; frame theory; randomized sampling.
\sloppy

\section{Introduction}

Many modern sensing and monitoring problems involve recovering an evolving
physical field from incomplete measurements. Examples include sea surface
temperature, where one seeks to reconstruct a large spatio-temporal
geophysical field from partial observations, and turbulence data, where
complex multi-dimensional flow patterns evolve over time while only limited measurements are available. In such settings, acquiring dense measurements at every time instance is often prohibitively expensive or simply impossible.
This naturally raises the question of whether sparse spatial measurements can be compensated for by exploiting temporal evolution.

This question lies at the heart of \emph{dynamical sampling}, which asks when an unknown initial state can be recovered from coarse spatial measurements collected at multiple time instances
\cite{lu2009spatial,aldroubi2015exact,aldroubi2017dynamical}.
A central principle in this area is the resulting \emph{space--time tradeoff},
where additional temporal samples compensate for limited spatial sensing by leveraging the underlying dynamics. Over the past decade, dynamical sampling has developed into a rich mathematical theory involving exact reconstruction, frame-theoretic characterizations, operator-theoretic analysis, robustness to noise, and system identification
\cite{aldroubi2016krylov,aldroubi2017iterative,tang2017system,
aldroubi2018dynamical,aldroubi2019frames,christensen2019frame,cabrelli2020dynamical,christensen2020frame}.

However, many evolving datasets of practical interest are not naturally
vector-valued. Instead, each temporal state is intrinsically
\emph{tensor-valued}, carrying multiway structure associated with spatial,
spectral, channel, or other physical organization. Sea surface temperature
and turbulence data are representative examples: each snapshot is naturally represented as a multidimensional array, and flattening the state into a
vector may obscure the underlying dynamical structure. This motivates a
fundamental question: how should dynamical sampling theory be formulated when the evolving states and the governing operators preserve tensor structure?

In this work, we study this question in the t-product framework for
third-order tensors \cite{kilmer2011factorization,Kilmer2013}. Let
$\Xcal\in\C^{m\times p\times n}$ denote an unknown initial tensor state and
let $\Acal\in\C^{m\times m\times n}$ be a tensor system operator. We consider the tensor-valued dynamical system
\begin{equation}\label{evol1-intro}
    \Xcal_t=\Acal^t * \Xcal,
    \qquad t=0,1,2,\dots,
\end{equation}
where $*$ denotes the t-product and powers are taken in the induced tensor
algebra. Given a spatial sampling set
$\Omega\subseteq[m]\times[p]\times[n]$ and a time horizon $T$, we observe
\begin{equation}\label{eq:T-intro}
    \Psi_{\Xcal}
    =
    \bigl\{
    [\Xcal_t]_{i,j,k}:
    (i,j,k)\in\Omega,\;
    t=0,\ldots,T-1
    \bigr\}.
\end{equation}
Our goal is to determine when these measurements uniquely determine the
initial tensor state, how many spatial samples are required, and how the
recovery can be performed effectively.

The t-product system admits an exact block-circulant lifting to a
finite-dimensional dynamical sampling problem. This lifting connects the
tensor model with classical dynamical sampling theory, but it does not reveal the tensor-specific structure governing recovery. In particular, applying the discrete Fourier transform along the third mode converts the t-product evolution into frontal-slice matrix dynamics, but recovery does not generally
decouple across the slices. When the same eigenvalue $\lambda$ appears in
multiple Fourier-domain frontal slices, the corresponding components share the same temporal signature $\lambda^t$. Therefore, their spatial contributions cannot be separated through time evolution alone and must be recovered jointly. 
The associated eigenspaces combine into an aggregated direct-sum spectral block indexed by the shared eigenvalue $\lambda$.

This aggregated spectral geometry provides the correct recovery viewpoint for tensor dynamical sampling. 
Instead of treating each frontal slice as an independent recovery problem, the effective subproblems are organized by shared spectral groups. The dimension
\begin{equation*}
    d_\lambda=\sum_{s\in I_\lambda}m_{\lambda,s}
\end{equation*}
of an aggregated block determines the number of spatial degrees of freedom
that must be captured by the sampling pattern. This observation reveals how the tensor Fourier structure modifies the classical space--time tradeoff and forms the basis of our deterministic recovery theory and randomized sampling analysis.

Our analysis proceeds in three stages. First, we characterize recovery for an arbitrary fixed spatial sampling mask. The finite-horizon stacked observation system admits a temporal--spectral factorization that separates spatial sampling limitations from temporal spectral ambiguity. 
After the temporal components become separable, recovery reduces to frame conditions on explicit sensing vectors associated with the aggregated eigenspace blocks.

Second, we study a simple randomized strategy for selecting the spatial
sampling pattern. Each coordinate is sampled independently with probability $\alpha$, and the resulting mask is kept fixed throughout the observation horizon. 
Rather than designing sensor locations through a combinatorial optimization problem, this strategy provides a simple and implementable
sampling mechanism. We show that, when the sampling rate is sufficiently
large, this random mask satisfies the required blockwise recovery conditions with high probability. The resulting sampling rate is determined by the aggregated spectral dimensions, incoherence of the eigenspaces, and the geometry of the corresponding Gram matrices.

Third, we study the stability of the recovery process. Temporal observations separate the spectral groups through a Vandermonde system. Although sufficient
temporal samples guarantee identifiability, the resulting Vandermonde inverse may still be poorly conditioned. We separate the spatial frame contribution from the temporal conditioning contribution and analyze how unequal spectral magnitudes affect the numerical stability of temporal separation.

For computation, we implement the finite-horizon stacked reconstruction
formulation, which avoids explicit eigendecomposition and numerical grouping of nearly identical eigenvalues. For real datasets, the t-product system operator is learned from training snapshots through a slicewise truncated DMD-type least-squares procedure in the Fourier domain. 
Synthetic experiments test the predicted sampling and conditioning behavior, while experiments on sea surface temperature and turbulence velocity data demonstrate the effectiveness of exploiting t-product dynamical structure for practical spatio-temporal reconstruction.

\subsection{Related work}

\medskip\noindent
\textbf{Classical dynamical sampling.}
The space--time tradeoff underlying dynamical sampling goes back at least to \cite{lu2009spatial}, with exact reconstruction theory developed in
\cite{aldroubi2015exact,aldroubi2017dynamical}.
Subsequent works established frame-theoretic and operator-theoretic characterizations, iterative and Krylov-based methods, robustness analysis, and system identification approaches \cite{aldroubi2016krylov,aldroubi2017iterative,tang2017system,
aldroubi2018dynamical,aldroubi2019frames,cabrelli2020dynamical}.

\medskip\noindent
\textbf{Multidimensional, randomized, and graph-based sampling.}
Dynamical sampling has also been studied beyond one-dimensional settings,
including multidimensional evolution systems
\cite{aceska2015multidimensional} and graph-based models. Recent work has
investigated robust estimation from randomized space--time samples, random
sampling of graph diffusion fields, and randomized sampling strategies for
graph dynamical systems
\cite{huang2024robust,huang2025random,gong2026randomized}.
These works emphasize the importance of adapting sampling strategies to the spectral structure of the underlying dynamics.

Graph signal processing provides another related framework in which sampling and reconstruction are organized by operator-adapted spectral spaces rather than ambient coordinates alone
\cite{Ortega2018}. Shift-invariant spaces, graph bandlimited spaces, and
reproducing-kernel spaces on finite graphs provide examples where spectral
geometry determines recovery conditions
\cite{ChungSun2024,ChenChungSun2026}.
In contrast, the present work develops a tensor-valued theory in which the
relevant spectral geometry arises from aggregated Fourier-slice eigenspaces of the t-product dynamics.

\medskip\noindent
\textbf{Tensor-valued dynamical systems.}
Tensor models have become important for preserving multiway structure in
remote sensing, imaging, tensor time series, and other evolving physical
systems
\cite{KoldaBader2009,Rogers2013,
WangHongHanLiYaoGaoZhangChanussot2023,MDLRTDynamicMRI2024}.
The t-product framework is particularly attractive because it combines an
algebraic tensor structure with Fourier-domain linearization
\cite{kilmer2011factorization,Kilmer2013}.
Recent works have studied tensor dynamical systems from system-theoretic and data-driven perspectives
\cite{chen2024tensor,mao2025data,HeHuLouChen2026TDMD}.

The preliminary work \cite{WangCaiHuang2024} introduced dynamical sampling
for three-dimensional tensor signals through a stacked reconstruction
formulation. The present paper develops a substantially broader theory by
establishing deterministic identifiability conditions, extending the theory to Jordan structures, providing randomized sampling guarantees, analyzing stability and temporal conditioning, and validating the framework through expanded numerical experiments.

\subsection{Contributions}

The main contributions of this paper are as follows.

\begin{itemize}

\item 
We formulate dynamical sampling for third-order tensor signals evolving under the t-product and identify the tensor-specific spectral structure that governs recovery beyond a generic lifted representation.

\item 
We show that shared eigenvalues across Fourier-domain frontal slices generate aggregated direct-sum eigenspace blocks. These blocks determine the effective recovery subproblems, and their dimensions $d_\lambda$ govern the spatial
sampling burden.

\item 
Theorem~\ref{thm:stacked} characterizes the finite-horizon stacked recovery system through a temporal--spectral factorization. Theorem~\ref{thm:det} provides necessary and sufficient frame conditions for the aggregated eigenspace blocks after temporal separation. 
The characterization is further extended to non-diagonalizable systems through generalized eigenspaces and
Jordan structure.

\item 
We analyze a Bernoulli spatial sampling strategy in which each coordinate is selected independently and the resulting mask is fixed over time.
Theorem~\ref{thm:union} establishes high-probability recovery guarantees by analyzing empirical Gram matrices on the aggregated blocks. The resulting sampling rate depends on intrinsic spectral quantities, including aggregated block dimensions and eigenspace incoherence.

\item 
We establish stability estimates separating the spatial frame factor from the temporal Vandermonde factor. Theorem~\ref{thm:temp-conditioning} identifies how unequal spectral magnitudes can deteriorate the relative conditioning of temporal spectral separation.

\item 
We implement Tensor DS reconstruction using learned t-product system operators. Synthetic experiments demonstrate sampling and conditioning trends consistent with the theory, while experiments on sea surface temperature and turbulence velocity data demonstrate the practical effectiveness of preserving t-product dynamical structure.

\end{itemize}

\subsection{Organization.}
The remainder of the paper is organized as follows.
Section~\ref{sec:prelim} reviews the t-product framework, the Fourier-domain representation of the tensor dynamics, and the lateral-slice-wise stacked recovery system.
Section~\ref{sec:det} develops deterministic identifiability results in the diagonalizable and Jordan settings. 
In particular, it analyzes the temporal--spectral structure of the stacked recovery system and derives frame conditions on aggregated eigenspace blocks. It also presents the stacked and decoupled reconstruction viewpoints.
Section~\ref{sec:rand} studies the Bernoulli spatial sampling strategy and establishes high-probability recovery guarantees through Gram-matrix concentration.
Section~\ref{sec:stability} analyzes recovery stability and temporal conditioning associated with the Vandermonde separation step.
Section~\ref{sec:experiment} presents numerical experiments, including synthetic phase-transition and temporal-conditioning studies, followed by sea-surface-temperature and turbulence-velocity reconstruction experiments.
The paper concludes with a summary of the theoretical and computational findings, and the proofs of the main results are provided afterward. Appendix~\ref{app:jordan} extends the deterministic theory to Jordan structures, while Appendix~\ref{app:scaling} discusses basis scaling and conditioning issues.

\section{Preliminaries and notation}\label{sec:prelim}
We begin by introducing the notation and basic tensor-algebraic preliminaries used throughout the paper; for convenience, the principal symbols are summarized in Table~\ref{tab:notation}.
For vectors in complex Euclidean spaces, we use the convention $\langle x,y\rangle=x^*y=\overline{x}^\top y$.

\begin{table}[ht!]
\centering
\small
\renewcommand{\arraystretch}{1.08}
\caption{Summary of notation used throughout the paper.}
\label{tab:notation}
\begin{tabular}{p{0.2\linewidth}p{0.75\linewidth}}
\toprule
\textbf{Notation} & \textbf{Description} \\
\midrule
\midrule
\multicolumn{2}{@{}l}{\textit{General notation}}\\
\midrule
 
$m,p,n$ & Tensor sizes: $\Xcal\in\C^{m\times p\times n}$, $\Acal\in\C^{m\times m\times n}$ 
\\
$T$ & Number of time steps
\\
$\Acal$ & Tensor driven operator in t-product evolution $\Xcal_t=\Acal^t*\Xcal$ 
\\
$\Xcal,\ \Xcal_t$ & Initial tensor state and tensor state at time $t$
\\
$\Omega$ & Spatial sampling set: $\Omega\subseteq[m]\times[p]\times[n]$ 
\\
$\Omega_j$ & Sampling set of the lateral slice $j$: $\Omega_j=\{(i,k):(i,j,k)\in\Omega\}\subseteq[m]\times[n]$ 
\\
$\widehat{\cdot}$ & DFT along the third mode (e.g., $\widehat{\Acal}$, $\widehat{\Xcal}$)
\\
$\wn$ & $n$-th root of unity: $\wn=e^{2\pi i/n}$
\\
\midrule
\multicolumn{2}{l}{\textit{Core spectral notation}}\\
 \midrule
$U_s,\ D_s$ & Slice diagonalization $[\widehat{\Acal}]_{:,:,s}=U_sD_sU_s^{-1}$ 
\\
$\Lambda_s$ & Set of distinct eigenvalues of $D_s$, $s\in [n]$
\\
$\Lambda$ & Set of distinct eigenvalues across all frontal slices: $\Lambda = \bigcup_{s\in [n]} \Lambda_s$
\\
$P_{\lambda,s}$ & Orthogonal projector onto the $\lambda$-eigenspace in slice $s$ 
\\
$\mathcal I_\lambda$ & Slice index set where $\lambda$ appears: $\mathcal I_\lambda=\{s\in[n]:\lambda\in\Lambda_s\}$ 
\\
$m_{\lambda,s}$ & Geometric multiplicity of $\lambda$ in slice $s$: 
$m_{\lambda,s}=\rank(P_{\lambda,s})$
\\
$d_\lambda$ & Aggregated $\lambda$-eigenspace dimension: $d_\lambda=\sum_{s\in\mathcal I_\lambda} m_{\lambda,s}$ 
\\
$\bigoplus$ & Block concatenation / direct-sum notation indexed by $\mathcal I_\lambda$ 
\\
$U_{\lambda,s}$ & Eigenvector submatrix for the $\lambda$-eigenspace in slice $s$ 
\\
$W_{\lambda,s}$ & Left inverse of $U_{\lambda,s}$ induced by $U_s^{-1}$
\\
\midrule
\multicolumn{2}{@{}l}{\textit{Stacked and temporal-decoupling notation}}\\
\midrule
$A_1(t)$ & Block-diagonal Fourier-domain evolution matrix at time $t$, defined in \eqref{eq:prelim-A1}
\\
$A_3(j)$ & Columnwise sampling operator in the Fourier domain, defined in \eqref{eq:prelim-A3}
\\
$M_j(T)$ & Columnwise stacked recovery matrix, defined in \eqref{eq:prelim-Mj}
\\
$\mathsf{V}_T(\Lambda)$ & Vandermonde matrix associated with the distinct eigenvalues in $\Lambda$, defined in \eqref{eq:vander}
\\
$V_\lambda^{(i,k)}$ & Deterministic sensing vector, given by \eqref{sensing1}
\\
\midrule
\multicolumn{2}{@{}l}{\textit{Random sampling notation}}\\
\midrule
${\bf v}_\lambda^{(i,k)}$ & Coordinate sensing vector, given by \eqref{sensing2}
\\
$\delta_{i,j,k}$,\ $\alpha$ & i.i.d.\ Bernoulli random variables with sampling rate $\alpha$
\\
$G_\lambda^j$ & Empirical Gram matrix: 
$G_\lambda^j= n \sum_{i=1}^m \sum_{k=1}^n \delta_{i,j,k}
{\bf v}_\lambda^{(i,k)}({\bf v}_\lambda^{(i,k)})^*$
\\
$\Sigma_\lambda$ & Population Gram matrix: $\mathbb E[G_\lambda^j]=\alpha\Sigma_\lambda$ 
\\
$\mu_{\lambda,s}$ & Incoherence parameter: 
$\mu_{\lambda,s} = \frac{m}{m_{\lambda,s}}  \max_{i\in [m]} \litnorm{ U_{\lambda,s}^*e_i }_2^2$
\\
$\mu_\lambda$ & Maximal incoherence over $\mathcal I_\lambda$: $\mu_\lambda=\max_{s\in\mathcal I_\lambda}\mu_{\lambda,s}$
\\
$\gamma_{\lambda,\min}$ & The smallest eigenvalue of $\Sigma_\lambda$: $\gamma_{\lambda,\min}=\lambda_{\min}(\Sigma_\lambda)$ 
\\
\bottomrule
\end{tabular}
\end{table}

\subsection{t-product and Fourier-domain dynamics}

For completeness, we briefly recall the t-product framework for third-order tensors (see \cite{kilmer2011factorization} for more details). 
Let $\Acal\in \C^{m\times r\times n}$ and $\Bcal\in \C^{r\times p\times n}$.
For each $k\in [n]:=\{1,2,\cdots,n\}$, write
$\Acal^{(k)}:=[\Acal]_{:,:,k}$ and $\Bcal^{(k)}:=[\Bcal]_{:,:,k}$
for their $k$-th frontal slices.
We define
\begin{equation*}
\operatorname{unfold}(\Bcal):=
\begin{bmatrix}
\Bcal^{(1)}\\
\Bcal^{(2)}\\
\vdots\\
\Bcal^{(n)}
\end{bmatrix},
\qquad
\operatorname{fold}\rb{\operatorname{unfold}(\Bcal)}:=\Bcal,
\end{equation*}
and
\begin{equation*}
\operatorname{bcirc}(\Acal):=
\begin{bmatrix}
\Acal^{(1)} & \Acal^{(n)} & \cdots & \Acal^{(2)}\\
\Acal^{(2)} & \Acal^{(1)} & \cdots & \Acal^{(3)}\\
\vdots & \vdots & \ddots & \vdots\\
\Acal^{(n)} & \Acal^{(n-1)} & \cdots & \Acal^{(1)}
\end{bmatrix}.
\end{equation*}
The t-product of $\Acal$ and $\Bcal$ is then defined by
\begin{equation*}
\Acal * \Bcal :=
\operatorname{fold}\rb{\operatorname{bcirc}(\Acal)\operatorname{unfold}(\Bcal)}
\in \C^{m\times p\times n}.
\end{equation*}

In this paper, we are mainly interested in the case
$\Acal\in \C^{m\times m\times n}$ and $\Xcal\in \C^{m\times p\times n}$, for which $\Acal * \Xcal \in \C^{m\times p\times n}$. 
Let $\widehat{\Acal}$ denote the discrete Fourier transform (DFT) of $\Acal$ along the third mode. Throughout the paper, we use the standard (unnormalized) DFT convention, consistent with MATLAB's \texttt{fft}. 
Let 
$\wn=e^{2\pi i/n}$.
Thus, for any tensor $\mathcal Z\in\C^{m\times p\times n}$,
\begin{equation*}
[\widehat{\mathcal Z}]_{:,:,s} =
\sum_{k=1}^n \wn^{-(s-1)(k-1)}[\mathcal Z]_{:,:,k}, \qquad s\in[n],
\end{equation*}
and
\begin{equation*}
[\mathcal Z]_{:,:,k} =  \frac{1}{n}\sum_{s=1}^n \wn^{(s-1)(k-1)}
[\widehat{\mathcal Z}]_{:,:,s},   \qquad k\in[n].
\end{equation*}
In computations,
\begin{equation*}
\widehat{\Acal}=\operatorname{fft}(\Acal,[\,],3)\in\C^{m\times m\times n}.
\end{equation*}
We write its frontal slices as
$[\widehat{\Acal}]_{:,:,s}\in \C^{m\times m}$, $s\in [n]$.
Under this transform, the t-product reduces to slicewise matrix multiplication, i.e.,
\begin{equation*}
[\widehat{\Acal * \Xcal}]_{:,:,s} =
[\widehat{\Acal}]_{:,:,s}[\widehat{\Xcal}]_{:,:,s},
\qquad s\in [n].
\end{equation*}
Consequently, the time evolution \eqref{evol1-intro} satisfies
\begin{equation*}
[\widehat{\Xcal}_t]_{:,:,s} =
[\widehat{\Acal}]_{:,:,s}^{t} [\widehat{\Xcal}]_{:,:,s},
\qquad s\in [n].
\end{equation*}

\subsection{Spectral structure across frontal slices in Fourier domain}\label{prelim:2.2}

In the main development, we restrict attention to the diagonalizable case, namely, we assume that each frontal slice $[\widehat{\Acal}]_{:,:,s}$ is diagonalizable.
Thus, for every $s\in[n]$, there exists an invertible matrix $U_s$ and a diagonal matrix $D_s$ such that
\begin{equation}\label{eq:diag}
[\widehat{\Acal}]_{:,:,s} = U_s D_s U_s^{-1},\qquad s\in[n].
\end{equation}
For each slice $s$, we denote by $\Lambda_s$ the set of distinct eigenvalues of $D_s$. We then define
\begin{equation}\label{lambda1}
    \Lambda := \bigcup_{s\in [n]} \Lambda_s,
\end{equation}
that is, $\Lambda$ is the set of all distinct eigenvalues appearing across all frontal slices, where each element is pairwise distinct by construction. 

For each $s\in[n]$ and each $\lambda\in\Lambda_s$, let $P_{\lambda,s}$ denote the orthogonal projection onto the eigenspace $E_{\lambda,s}$ associated with $\lambda$ of $D_s$, that is, $E_{\lambda,s}=\ker(D_s-\lambda I)$. Then $D_s$ admits the spectral decomposition
\begin{equation}\label{eq:spec1}
    D_s = \sum_{\lambda \in \Lambda_s} \lambda P_{\lambda,s},\quad s\in [n].
\end{equation}

\begin{remark}
If $\Acal \in \mathbb{R}^{m\times m\times n}$ is real-valued, then its Fourier-domain frontal slices satisfy the conjugate symmetry
\begin{equation*}
[\widehat{\Acal}]_{:,:,s}
=
\overline{[\widehat{\Acal}]}_{:,:,\,n-s+2},
\qquad
s=2,\ldots,\Big\lfloor \frac{n}{2}\Big\rfloor+1.
\end{equation*}
This follows from the conjugate symmetry of the discrete Fourier transform along the third mode; see, e.g., \cite{Kilmer2013}. Consequently, the eigenvalues of paired frontal slices occur in complex-conjugate pairs. In particular, any real eigenvalue must appear in both slices $s$ and $n-s+2$, which may cause the index set $\mathcal{I}_\lambda$ defined below to contain multiple slices.
\end{remark}

Motivated by this observation, for each eigenvalue $\lambda\in\Lambda$ we define the index set
\begin{equation}\label{C1}
    \mathcal{I}_\lambda := \cb{ s\in[n] : \lambda\in\Lambda_s }.
\end{equation}
Thus $\mathcal{I}_\lambda$ specifies precisely which frontal slices contain the given eigenvalue $\lambda$. 

In particular, for a fixed eigenvalue $\lambda \in \Lambda$, the associated eigenspaces across all frontal slices sharing $\lambda$ form the direct sum space
\begin{equation*}
    \bigoplus_{s\in \mathcal{I}_\lambda} \textrm{ran} \rb{P_{\lambda,s}},
\end{equation*}
whose dimension is
\begin{equation}\label{dim1}
    d_\lambda := \sum_{s\in \mathcal{I}_\lambda} m_{\lambda,s}
\end{equation}
where $m_{\lambda,s} := \rank\rb{ P_{\lambda,s}}$ denotes the geometric multiplicity of $\lambda$ within the associated slice $s\in\mathcal{I}_\lambda$.

Throughout the paper, we identify this direct sum with its canonical block representation. Accordingly, the notation $\bigoplus$ may be interpreted concretely as block concatenation indexed by $\mathcal{I}_\lambda$.
For each fixed $\lambda$, we choose and fix an arbitrary ordering of the indices in $\mathcal{I}_\lambda$, and all concatenations 
are taken consistently with respect to this ordering.  We do not distinguish between the external direct sum and its canonical block vector representation in $\mathbb{C}^{d_\lambda}$.

For each $s\in [n]$ and each $\lambda\in \Lambda_s$, let
\begin{equation*}
U_{\lambda,s}\in \C^{m\times m_{\lambda,s}}
\end{equation*}
denote the submatrix of $U_s$ whose columns correspond to the eigenvalue $\lambda$, and let
\begin{equation*}
W_{\lambda,s}\in \C^{m_{\lambda,s}\times m}
\end{equation*}
denote the corresponding row submatrix of $U_s^{-1}$.
Equivalently,
\begin{equation*}
U_{\lambda,s}W_{\lambda,s} = U_sP_{\lambda,s}U_s^{-1}, \quad W_{\lambda,s}U_{\lambda,s}=I_{m_{\lambda,s}},
\end{equation*}
where $I_{m_{\lambda,s}}$ denotes the identity matrix on $\C^{m_{\lambda,s}}$.

The importance of this grouping is that recovery is not governed by the individual frontal slices separately, but by the aggregated spectral blocks associated with common eigenvalues.  Indeed, once the temporal samples are decoupled through the distinct powers $\lambda^t$, the remaining spatial information for a fixed eigenvalue $\lambda$ lives naturally on the direct sum space
\begin{equation*}
\bigoplus_{s\in \mathcal{I}_\lambda} \ran\rb{P_{\lambda,s}} .
\end{equation*}
Thus the relevant independent recovery subproblems are indexed by $\lambda\in\Lambda$, not by the frontal slices $s\in[n]$.  This viewpoint will underlie both the deterministic injectivity characterization in Section~\ref{sec:det} and the randomized Gram-matrix analysis in Section~\ref{sec:rand}.

\subsection{The lateral-slice-wise stacked recovery system}

To connect the analysis below with the reconstruction method in \cite{WangCaiHuang2024}, we recall its lateral-slice-wise stacked formulation in the Fourier domain.

Let $P_\Omega \in \cb{0,1}^{m\times p\times n}$ be the sampling mask, that is,
\begin{equation*}
[P_\Omega]_{i,j,k} =
\begin{cases}
1, & (i,j,k)\in\Omega,\\
0, & (i,j,k)\notin\Omega.
\end{cases}
\end{equation*}
Applying the DFT along the third mode, we represent the recovery problem for fixed lateral slice $j$ in terms of the Fourier coefficients of the unknown tensor. Fix $j\in[p]$, and define
\begin{equation}\label{eq:prelim-xj}
x(j) := \qm{
[\widehat{\Xcal}]_{:,j,1}\\
[\widehat{\Xcal}]_{:,j,2}\\
\vdots\\
[\widehat{\Xcal}]_{:,j,n}
} \in \C^{mn}.
\end{equation}
For each $t = 0,\ldots,T-1$, define the sampled Fourier-domain data vector
\begin{equation}\label{eq:prelim-bjt}
b(j,t) := \qm{
[\widehat{P_\Omega(\Xcal_t)}]_{:,j,1}\\
[\widehat{P_\Omega(\Xcal_t)}]_{:,j,2}\\
\vdots\\
[\widehat{P_\Omega(\Xcal_t)}]_{:,j,n}
} \in \C^{mn}.
\end{equation}

The temporal evolution is represented by the block diagonal matrix
\begin{equation}\label{eq:prelim-A1}
A_1(t) := \textrm{blkdiag}\rb{
[\widehat{\Acal}]_{:,:,1}^{\,t},
[\widehat{\Acal}]_{:,:,2}^{\,t},
\ldots,
[\widehat{\Acal}]_{:,:,n}^{\,t}
} \in \C^{mn\times mn},
\end{equation}
where $\textrm{blkdiag}(\cdot)$ denotes the block diagonal concatenation of its matrix arguments.
This matrix collects the Fourier-domain evolution of all frontal slices at time $t$.

To encode the sampling geometry for the $j$th lateral slice, define $A_2(j)\in \C^{n\times n\times m}$ by
\begin{equation}\label{eq:prelim-A2}
[A_2(j)]_{:,:,\,\ell} := \frac{1}{n} \operatorname{circ}\rb{[\widehat{P_\Omega}]_{\ell,j,:}},
\qquad \ell\in[m],
\end{equation}
and let
\begin{equation}\label{eq:prelim-A3}
A_3(j) :=
\begin{bmatrix}
\diag([A_2(j)]_{1,1,:}) & \diag([A_2(j)]_{1,2,:}) & \cdots & \diag([A_2(j)]_{1,n,:})\\
\diag([A_2(j)]_{2,1,:}) & \diag([A_2(j)]_{2,2,:}) & \cdots & \diag([A_2(j)]_{2,n,:})\\
\vdots & \vdots & \ddots & \vdots\\
\diag([A_2(j)]_{n,1,:}) & \diag([A_2(j)]_{n,2,:}) & \cdots & \diag([A_2(j)]_{n,n,:})
\end{bmatrix}.
\end{equation}
Thus $A_3(j)\in \C^{mn\times mn}$ records how the sampling mask acts on the Fourier coefficients for fixed $j$.

With these definitions, recovery for this fixed $j$ reduces to the linear systems
\begin{equation}\label{eq:prelim-local-system}
A_3(j) A_1(t)\, x(j) = b(j,t),
\qquad t = 0,\ldots,T-1.
\end{equation}
Stacking over time, we obtain the lateral-slice-wise system
\begin{equation}\label{eq:prelim-stacked}
M_j(T)\, x(j) = \mathbf b_j(T),
\end{equation}
where
\begin{equation}\label{eq:prelim-Mj}
M_j(T) := \begin{bmatrix}
A_3(j)A_1(0)\\
A_3(j)A_1(1)\\
\vdots\\
A_3(j)A_1(T-1)
\end{bmatrix},
\qquad
{\bf b}_j(T) := \begin{bmatrix}
b(j,0)\\
b(j,1)\\
\vdots\\
b(j,T-1)
\end{bmatrix}.
\end{equation}

Hence the recovery problem decomposes into $p$ independent stacked systems, one for each lateral slice $j$. This is precisely the computational starting point of the method in \cite{WangCaiHuang2024}. In the next section, we show that, under additional spectral structure, the stacked system \eqref{eq:prelim-stacked} already contains a hidden temporal--spectral decomposition, which leads to the refined identifiability and stability analysis developed later in the paper.

\section{Deterministic identifiability: necessary and sufficient conditions}\label{sec:det}

\subsection{Lifted-matrix viewpoints: observability and classical dynamical sampling}\label{spatial}

\

\begin{figure}[ht!]
    \centering
    \includegraphics[width=1\linewidth]{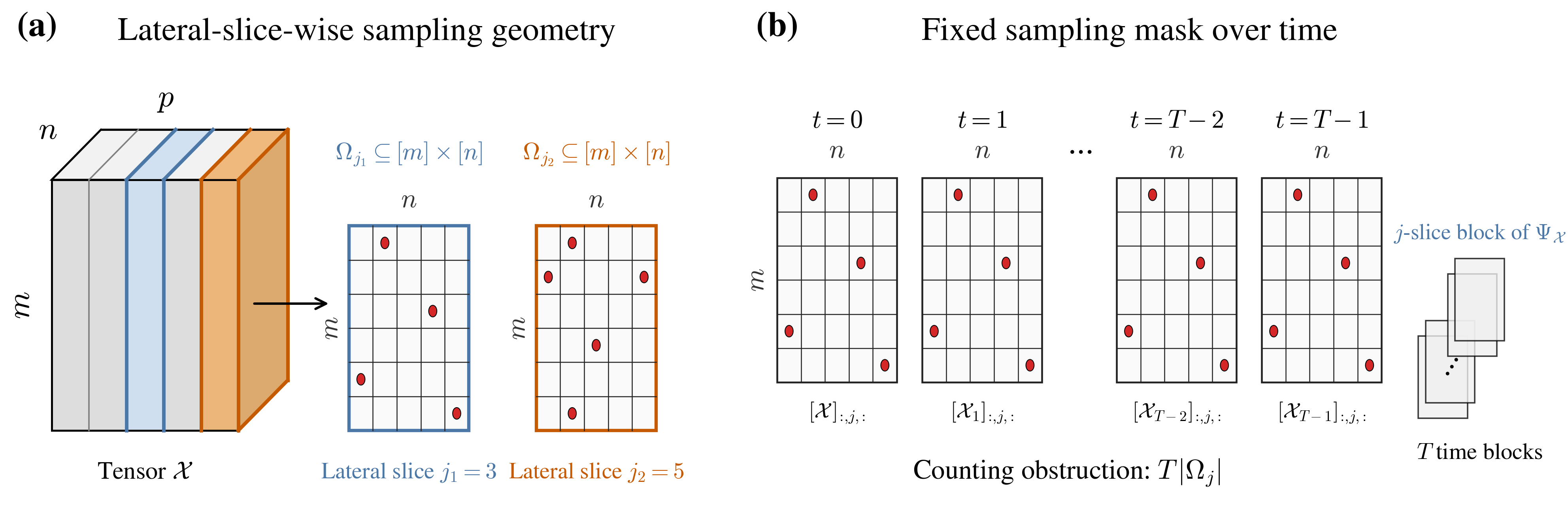}
    \caption{Time-independent lateral-slice sampling convention. (a) For each lateral slice $j$, the set $\Omega_j\subseteq[m]\times[n]$ specifies which entries of a lateral slice are observed, indexed by the mode-$1$ and mode-$3$ coordinates $(i,k)$. The mask may vary with $j$. (b) For fixed $j$, the samples at every time $t=0,\ldots,T-1$ are taken from $[\Xcal_t]_{:,j,:}$ on the same coordinate set $\Omega_j$.}
\end{figure}

We begin by interpreting the recovery problem in the spatial domain. 
Under the t-product evolution \eqref{evol1-intro}, the tensor dynamics can be represented as a linear dynamical system through vectorization. 
Let $\mathbf{M} \in \mathbb{C}^{mn \times mn}$ be the block circulant matrix associated with the t-product action $\mathcal{A} \,*\,$, defined as
\begin{equation*}
\mathbf{M} := \textrm{bcirc}(\mathcal{A}) = 
\begin{bmatrix} 
[\mathcal{A}]_{:,:,1} & [\mathcal{A}]_{:,:,n} & \dots & [\mathcal{A}]_{:,:,2} \\
[\mathcal{A}]_{:,:,2} & [\mathcal{A}]_{:,:,1} & \dots & [\mathcal{A}]_{:,:,3} \\
\vdots & \vdots & \ddots & \vdots \\
[\mathcal{A}]_{:,:,n} & [\mathcal{A}]_{:,:,n-1} & \dots & [\mathcal{A}]_{:,:,1}.
\end{bmatrix}
\end{equation*}
Then for each lateral slice $j\in[p]$, the evolution \eqref{evol1-intro} takes the form
\begin{equation*}
    \mathrm{vec}([\mathcal{X}_t]_{:,j,:})=\mathbf{M}^t \mathbf{u}_j,
\quad \mathbf{u}_j:=\mathrm{vec}([\mathcal{X}]_{:,j,:}),  
\end{equation*}
where $\mathrm{vec}(\cdot)$ denotes the standard column-stacking vectorization.

Thus, for each fixed $j\in[p]$, the tensor recovery problem can be embedded into existing finite-dimensional matrix theories. We first record the corresponding lifted-matrix observability viewpoint. At the same time, the block circulant structure of $\mathbf{M}$ already reflects a basic difference from the classical setting: dynamics are coupled across the tensor modes, so the evolution of each entry of $\mathcal{X}$ depends on all frontal slices of $\widehat{\Acal}$.

For each $j \in [p]$, define
\begin{equation}\label{index1-intro}
    \Omega_j:=\{(i,k):(i,j,k)\in\Omega\}\subseteq[m]\times[n],
\end{equation}
so that $\Omega=\bigcup_{j=1}^p \Omega_j$.
Let $\mathbf{S}_{\Omega_j} \in \{0,1\}^{|\Omega_j| \times mn}$ be the sampling matrix that extracts the coordinates of $\mathbf{u}_j\in \C^{mn}$ indexed by $(i,k)\in\Omega_j$.
The measurements collected at time $t$ are therefore
\begin{equation}\label{matrixsys}
    \mathbf{y}_{j,t}= \mathbf{S}_{\Omega_j}\mathbf{M}^t \mathbf{u}_j.
\end{equation}

Under this formulation, the recovery reduces to an observability problem for a linear dynamical system.
The following statement is a direct consequence of the classical observability criterion for linear systems (see, e.g., \cite[Chap. 6]{Chen1999}) and is recorded here to clarify the connection with our tensor sampling model.

\begin{corollary}\label{thm:spatial}
The tensor signal $\mathcal{X}$ is uniquely recoverable from the spatio-temporal samples $\Psi_\mathcal{X}$ in \eqref{eq:T-intro}  if and only if for every $j \in [p]$, the spatio-temporal observability matrix $\mathbf{O}_j$ satisfies
\begin{equation} \label{eq:observ}
    \text{rank}(\mathbf{O}_j) = mn,
\end{equation}
where $\mathbf{O}_j \in \mathbb{C}^{|\Omega_j|T \times mn}$ is defined as
\begin{equation*}
    \mathbf{O}_j := \begin{bmatrix} 
    \mathbf{S}_{\Omega_j} \mathbf{M}^0\\ 
    \mathbf{S}_{\Omega_j} \mathbf{M}^1 \\ 
    \vdots\\ 
    \mathbf{S}_{\Omega_j} \mathbf{M}^{T-1} 
    \end{bmatrix}.
\end{equation*}
In particular,
a necessary condition for \eqref{eq:observ} is
\begin{equation*}
    |\Omega_j|T \ge mn.
\end{equation*}
\end{corollary}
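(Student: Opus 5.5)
The plan is to reduce the tensor recovery question to a family of independent finite-dimensional linear inverse problems, one per lateral slice, and then apply elementary linear algebra. First, I will verify that the t-product evolution acts lateral-slice-wise. By definition, $\Acal*\Xcal=\operatorname{fold}(\operatorname{bcirc}(\Acal)\operatorname{unfold}(\Xcal))$. Column $j$ of $\operatorname{unfold}(\Xcal)$ depends only on the lateral slice $[\Xcal]_{:,j,:}$, and it equals $\mathbf{u}_j=\mathrm{vec}([\Xcal]_{:,j,:})$ under the column-stacking convention, in which the frontal index $k$ is the outer block index. Hence $\mathrm{vec}([\Acal*\Xcal]_{:,j,:})=\mathbf{M}\mathbf{u}_j$. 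Since $\Acal^t$ is defined through the t-product algebra and $\operatorname{bcirc}(\Acal*\Bcal)=\operatorname{bcirc}(\Acal)\operatorname{bcirc}(\Bcal)$, induction on $t$ gives $\mathrm{vec}([\Xcal_t]_{:,j,:})=\mathbf{M}^t\mathbf{u}_j$. Here $\Acal^0$ is the identity tensor, and its bcirc is $I_{mn}$.

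Second, I will split the data $\Psi_\Xcal$ by lateral index. The samples with second index $j$ are exactly the entries $(i,k)\in\Omega_j$ of $[\Xcal_t]_{:,j,:}$ for $t<T$. Stacking these gives the vector $\mathbf{O}_j\mathbf{u}_j$, with blocks given by \eqref{matrixsys}. The map $\Xcal\mapsto\Psi_\Xcal$ is therefore linear and equal, up to a reordering of coordinates, to $\Xcal\mapsto(\mathbf{O}_1\mathbf{u}_1,\dots,\mathbf{O}_p\mathbf{u}_p)$. The $\mathbf{u}_j$ range freely and independently over $\C^{mn}$, and $\Xcal\mapsto(\mathbf{u}_j)_j$ is a bijection. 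Unique recovery means this linear map is injective, which is equivalent to injectivity of each $\mathbf{O}_j$.

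For the direction from injectivity of the whole map to injectivity of each block, suppose some $\mathbf{O}_{j_0}$ has a nonzero kernel vector $\mathbf{v}$. Take $\Xcal$ with lateral slice $j_0$ equal to the reshaping of $\mathbf{v}$ and all other slices zero. Then $\Xcal$ and $0$ produce identical samples, so recovery fails. The converse is immediate from the block structure. Injectivity of $\mathbf{O}_j\in\C^{|\Omega_j|T\times mn}$ is equivalent to $\rank(\mathbf{O}_j)=mn$, which is the observability criterion. The necessary condition then follows because the rank is at most the number of rows, $|\Omega_j|T$.

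The only step requiring care is the first: checking that the column-stacking vectorization matches the block ordering of $\operatorname{unfold}$, and that powers in the t-product algebra correspond to matrix powers of $\mathbf{M}$. Both are routine consequences of the bcirc homomorphism property.
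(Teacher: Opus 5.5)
Your proof is correct and follows essentially the same route as the paper. The paper vectorizes each lateral slice to obtain $\mathbf{y}_{j,t}=\mathbf{S}_{\Omega_j}\mathbf{M}^t\mathbf{u}_j$ and then simply cites the classical observability criterion. You make explicit what the paper leaves implicit: the column-stacking/$\operatorname{unfold}$ match, the identity $\operatorname{bcirc}(\Acal*\Bcal)=\operatorname{bcirc}(\Acal)\operatorname{bcirc}(\Bcal)$ that yields $\mathbf{M}^t$, and the decoupling across $j$ via a tensor supported on a single lateral slice.
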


The same lifted system may also be viewed through the classical finite-dimensional dynamical sampling framework. Indeed, if $\mathbf e_{i,k}\in\C^{mn}$ denotes the standard coordinate vector associated with $(i,k)\in[m]\times[n]$ under the vectorization of $[\Xcal]_{:,j,:}$, then
\begin{equation*}
    [\Xcal_t]_{i,j,k} = \ab{\mathbf e_{i,k},\mathbf M^t \mathbf u_j}
    = \ab{(\mathbf M^*)^t \mathbf e_{i,k},\mathbf u_j}, \qquad (i,k)\in\Omega_j.
\end{equation*}
Thus, for each fixed $j$, the tensor recovery problem may be viewed as finite dynamical sampling generated by the family
\begin{equation*}
    \cb{(\mathbf M^*)^t \mathbf e_{i,k} : (i,k)\in\Omega_j,\ 0\le t\le T-1 }.
\end{equation*}

In the diagonalizable setting of \eqref{eq:diag}, the lifted matrix $\mathbf M$ and hence $\mathbf M^*$ are diagonalizable. Therefore the classical finite-dimensional dynamical sampling theorem \cite[Theorem 2.3]{aldroubi2017dynamical} applies to the lifted system \eqref{matrixsys}.
\begin{corollary}
Assume that each Fourier-domain frontal slice is diagonalizable as in \eqref{eq:diag}, and let $Q_\lambda$ denote the spectral projection of $\mathbf M^*$ onto the eigenspace associated with $\overline{\lambda}$, $\lambda\in\Lambda$. If $T \ge |\Lambda|$, then the tensor signal $\Xcal$ is uniquely recoverable from the spatio-temporal samples $\Psi_\Xcal$ if and only if, for every $j\in[p]$ and every $\lambda\in\Lambda$, the family
\begin{equation*}
    \cb{Q_\lambda \mathbf e_{i,k} : (i,k)\in\Omega_j}
\end{equation*}
forms a frame for $\ran\rb{Q_\lambda} = Q_\lambda \rb{ \C^{mn} }$. In particular, a necessary condition is
\begin{equation*}
    |\Omega_j| \ge d_\lambda, \qquad \lambda\in\Lambda.
\end{equation*}
\end{corollary}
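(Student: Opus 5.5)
The plan is to reduce the statement to the classical finite-dimensional dynamical sampling theorem \cite[Theorem 2.3]{aldroubi2017dynamical}, applied separately to each lateral slice $j$. The work lies in verifying its hypotheses for $\mathbf M^*$ and in identifying the relevant spectral data (distinct eigenvalues and eigenspace dimensions) with the quantities $\Lambda$ and $d_\lambda$ defined in Section~\ref{prelim:2.2}.

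\textbf{Step 1: block-diagonalize $\mathbf M$.} Let $F_n$ denote the unnormalized DFT matrix. The standard block-circulant identity gives
\begin{equation*}
(F_n\otimes I_m)\,\mathbf M\,(F_n\otimes I_m)^{-1}=\mathrm{blkdiag}\bigl([\widehat{\Acal}]_{:,:,1},\ldots,[\widehat{\Acal}]_{:,:,n}\bigr)=A_1(1).
\end{equation*}
Insert the slice diagonalizations \eqref{eq:diag}. With $\mathbf U:=(F_n\otimes I_m)^{-1}\mathrm{blkdiag}(U_1,\ldots,U_n)$ we get $\mathbf M=\mathbf U\,\mathrm{blkdiag}(D_1,\ldots,D_n)\,\mathbf U^{-1}$. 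Hence $\mathbf M$ is diagonalizable, and so is $\mathbf M^*=\mathbf U^{-*}\,\mathrm{blkdiag}(\overline{D_s})\,\mathbf U^{*}$. Two consequences follow directly:
\begin{itemize}
\item the set of distinct eigenvalues of $\mathbf M^*$ is exactly $\overline{\Lambda}$, so it has $|\Lambda|$ elements;
\item the $\overline\lambda$-eigenspace of $\mathbf M^*$ is the image under $\mathbf U^{-*}$ of $\bigoplus_{s\in\mathcal I_\lambda}\ran(P_{\lambda,s})$, placed in the block positions $s\in\mathcal I_\lambda$. Its dimension is therefore $\sum_{s\in\mathcal I_\lambda}m_{\lambda,s}=d_\lambda$.
\end{itemize}

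\textbf{Step 2: apply the classical theorem for each $j$.} By \eqref{matrixsys} and the identity $[\Xcal_t]_{i,j,k}=\langle(\mathbf M^*)^t\mathbf e_{i,k},\mathbf u_j\rangle$, recovering $\mathbf u_j$ is equivalent to the family $\{(\mathbf M^*)^t\mathbf e_{i,k}:(i,k)\in\Omega_j,\ 0\le t\le T-1\}$ being complete (a frame) in $\C^{mn}$. That is the setting of \cite[Theorem 2.3]{aldroubi2017dynamical} with operator $\mathbf M^*$ and initial sampling set $\Omega_j$. For a diagonalizable operator and $T$ at least the number of distinct eigenvalues, that theorem gives: the dynamical family is a frame if and only if $\{Q_\lambda\mathbf e_{i,k}\}_{(i,k)\in\Omega_j}$ is a frame for $\ran(Q_\lambda)$ for every eigenvalue. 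Here $Q_\lambda$ is the spectral projection of $\mathbf M^*$ onto its $\overline\lambda$-eigenspace.

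The one point needing care is the sufficiency direction at the threshold $T=|\Lambda|$. I would check it directly with a Vandermonde argument:
\begin{itemize}
\item write $(\mathbf M^*)^t=\sum_{\lambda\in\Lambda}\overline\lambda^{\,t}Q_\lambda$;
\item since the $\overline\lambda$ are pairwise distinct and $T\ge|\Lambda|$, the $T\times|\Lambda|$ Vandermonde matrix in the $\overline\lambda$ has full column rank;
\item consequently, $\langle(\mathbf M^*)^t\mathbf e_{i,k},\mathbf u\rangle=0$ for all $t\le T-1$ forces $\langle Q_\lambda\mathbf e_{i,k},\mathbf u\rangle=0$ for each $\lambda$ separately;
\item the frame condition then gives $Q_\lambda^*\mathbf u=0$ for all $\lambda$, and $\sum_\lambda Q_\lambda^*=I$ yields $\mathbf u=0$.
\end{itemize}
Conversely, if some frame condition fails, pick $0\neq\mathbf u\in\ran(Q_\lambda^*)$ orthogonal to all $Q_\lambda\mathbf e_{i,k}$. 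Because $Q_\mu^*\mathbf u=0$ for $\mu\ne\lambda$, this $\mathbf u$ annihilates every sample. Since the slices $j$ are decoupled, $\Xcal$ is determined if and only if every $\mathbf u_j$ is.

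\textbf{Step 3: the counting condition.} A family of $|\Omega_j|$ vectors spanning a space of dimension $\dim\ran(Q_\lambda)=d_\lambda$ (by Step~1) must satisfy $|\Omega_j|\ge d_\lambda$.

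\textbf{Main obstacle.} I expect the hardest part to be Step~1's identification of the eigenspaces of $\mathbf M^*$ with the aggregated blocks, and specifically the handling of adjoints and conjugation. One must track carefully that $\mathbf M^*$ has eigenvalues $\overline\lambda$ and that distinct $\lambda$ give distinct $\overline\lambda$. One must also note that $Q_\lambda$ is a generally oblique spectral projection, not an orthogonal one. Only its range dimension and the resolution $\sum_\lambda Q_\lambda=I$ are used, so the argument is unaffected.
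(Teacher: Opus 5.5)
Your proposal is correct and follows essentially the same route as the paper, which simply observes that $\mathbf M=\textrm{bcirc}(\Acal)$, and hence $\mathbf M^*$, is diagonalizable and invokes the classical finite-dimensional dynamical sampling theorem \cite[Theorem 2.3]{aldroubi2017dynamical} slice by slice. Your DFT block-diagonalization makes explicit the identification $\dim\ran(Q_\lambda)=d_\lambda$ behind the counting condition, which the paper leaves implicit. Your direct Vandermonde verification is the same argument the paper uses for Theorem~\ref{thm:det}, applied to the lifted operator.
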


The two lifted viewpoints above are useful because they place the fixed-$j$ tensor recovery problem within standard finite-dimensional theories. The observability criterion gives a global rank test for the matrix $\mathbf O_j$, while the classical dynamical sampling formulation gives a global projected-frame condition for the eigenspaces of $\mathbf M^*$. Both criteria are correct, but they remain lifted-matrix conditions. They do not explain how the eigenspaces of $\mathbf M=\textrm{bcirc}\rb{\Acal}$ are assembled from the Fourier-domain frontal slices of $\widehat\Acal$, nor how the tensor sampling coordinates $(i,k)$ act on those slice components. The decoupled formulation below resolves this global condition in tensor Fourier coordinates. It identifies the aggregated eigenspace blocks associated with shared eigenvalues and derives the corresponding slice-aware sensing vectors. This is the structure needed later to separate temporal Vandermonde conditioning from spatial sampling geometry and to derive the blockwise randomized bounds.

\subsection{The temporal--spectral structure of the stacked system}\label{stacked}

In this subsection, we work in the diagonalizable setting of Section~\ref{sec:prelim}. 
Thus we retain the notation $U_s$, $D_s$, $\Lambda$,  $m_{\lambda,s}$, and $d_\lambda$ introduced in subsection~\ref{prelim:2.2}. The non-diagonalizable case requires additional Jordan-block notation and is discussed separately in Appendix~\ref{app:jordan}.

We first examine the stacked Fourier-domain system before imposing the temporal threshold $T\ge|\Lambda|$.
The purpose is to reorganize the columns of $M_j(T)$ according to the distinct eigenvalues in $\Lambda$ and to identify the spatial blocks that correspond to each aggregated eigenspace.

For each $t=0,\ldots,T-1$, define block diagonal matrices
\begin{equation}\label{eqn:WDeltat}
W := \textrm{blkdiag}\rb{U_1,\ldots,U_n}, \qquad
\Delta_t := \textrm{blkdiag}\rb{D_1^t,\ldots,D_n^t}.
\end{equation}
Then the block diagonal matrix $A_1(t)$ defined in \eqref{eq:prelim-A1} satisfies
\begin{equation}\label{eq:decomp}
    A_1(t) = W \Delta_t W^{-1}.
\end{equation}
Enumerate the distinct eigenvalues in $\Lambda$ as $\Lambda = \cb{\lambda_1,\ldots,\lambda_R}$,
and choose a permutation matrix $P$ so that
\begin{equation}\label{eq:reordering1}
    P^{-1}\Delta_tP = \textrm{blkdiag}\rb{\lambda_1^t I_{d_{\lambda_1}},\ldots,\lambda_R^t I_{d_{\lambda_R}}}.
\end{equation}
Set $Q := WP$.
We partition the matrix $A_3(j)Q$ according to the eigenvalue ordering in \eqref{eq:reordering1}
\begin{equation}\label{eq:Bblock}
    A_3(j)Q = \qm{ B_{j,\lambda_1} & \cdots & B_{j,\lambda_R} }, \qquad
B_{j,\lambda}\in\C^{mn\times d_\lambda}.
\end{equation}
Thus $B_{j,\lambda}$ is the $d_\lambda$-column block of $A_3(j)Q$ corresponding to the eigenvalue $\lambda$ in the reordered coordinates \eqref{eq:reordering1}.

We also define the Vandermonde matrix associated with the distinct eigenvalues in $\Lambda$ by
\begin{equation}\label{eq:vander}
\mathsf V_T(\Lambda):=
\qm{ 1 & \cdots & 1\\
\lambda_1 & \cdots & \lambda_R\\
\vdots & & \vdots\\
\lambda_1^{T-1} & \cdots & \lambda_R^{T-1} }\in\C^{T\times R}.
\end{equation}
We use $\otimes$ to denote the Kronecker product.

\begin{theorem}\label{thm:stacked}
For fixed $j\in[p]$, the following are equivalent:
\begin{enumerate}[label={\rm(\roman*)}]
    \item $M_j(T)$ has full column rank.
    \item Each $B_{j,\lambda}$ has full column rank, and
    \begin{equation*}
        \ker\rb{\mathsf{V}_T(\Lambda)\otimes I_{mn}} \cap \ran\rb{ \text{\rm blkdiag}\rb{B_{j,\lambda_1},\ldots,B_{j,\lambda_R}}} = \cb{0}.
    \end{equation*}
\end{enumerate}
\end{theorem}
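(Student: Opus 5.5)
The plan is to factor $M_j(T)$ into three pieces: an invertible change of coordinates on the right, a block-diagonal spatial matrix in the middle, and a Kronecker–Vandermonde matrix on the left. Rank questions about $M_j(T)$ then reduce to a composition of two maps, and the theorem follows from the elementary fact that a composition $XY$ is injective if and only if $Y$ is injective and $\ker X\cap\ran Y=\{0\}$.

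First, by \eqref{eq:decomp} and $Q=WP$ with $P$ a permutation, we have
\begin{equation*}
A_3(j)A_1(t) = A_3(j)Q\,(P^{-1}\Delta_tP)\,Q^{-1}.
\end{equation*}
Using \eqref{eq:reordering1} and the partition \eqref{eq:Bblock}, this gives
\begin{equation*}
A_3(j)A_1(t)Q = \qm{\lambda_1^t B_{j,\lambda_1} & \cdots & \lambda_R^t B_{j,\lambda_R}}.
\end{equation*}
Stacking over $t=0,\ldots,T-1$, the block in row $t$ and column $r$ of $M_j(T)Q$ is $\lambda_r^t B_{j,\lambda_r}$. One checks directly, comparing block entries, that this is exactly
\begin{equation*}
M_j(T)\,Q = \rb{\mathsf V_T(\Lambda)\otimes I_{mn}}\,\textrm{blkdiag}\rb{B_{j,\lambda_1},\ldots,B_{j,\lambda_R}}.
\end{equation*}
The point is that the $(t,r)$ block of $\mathsf V_T\otimes I_{mn}$ is $\lambda_r^t I_{mn}$, and multiplying it by the $r$-th diagonal block $B_{j,\lambda_r}$ gives the stated entry. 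The dimensions are consistent: $\mathsf V_T(\Lambda)\otimes I_{mn}$ is $Tmn\times Rmn$, while the block-diagonal matrix, denote it $\mathbf B_j$, is $Rmn\times\sum_\lambda d_\lambda = Rmn\times mn$.

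Since $Q$ is invertible, $M_j(T)$ has full column rank if and only if $(\mathsf V_T\otimes I)\mathbf B_j$ is injective. This holds if and only if $\mathbf B_j$ is injective and $\ker(\mathsf V_T\otimes I)\cap\ran\mathbf B_j=\{0\}$.

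For the forward direction: if $\mathbf B_j y=0$ with $y\ne0$, the product kills $y$. If $0\ne z=\mathbf B_j y\in\ker(\mathsf V_T\otimes I)$, then $y\ne0$ and again the product kills $y$.

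For the converse: if $(\mathsf V_T\otimes I)\mathbf B_j y=0$, then $\mathbf B_j y$ lies in the intersection, so $\mathbf B_j y=0$, and injectivity gives $y=0$.

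Finally, a block-diagonal matrix is injective exactly when each diagonal block $B_{j,\lambda}$ has full column rank. This yields (ii).

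The only genuinely delicate step is the factorization identity, specifically making sure that the permutation $P$ aligns the columns of $W$ consistently with the partition \eqref{eq:Bblock}. Everything else is linear algebra.
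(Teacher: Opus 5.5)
Your proposal is correct and follows essentially the same route as the paper. The paper first establishes the factorization $M_j(T)Q=(\mathsf V_T(\Lambda)\otimes I_{mn})\,\textrm{blkdiag}(B_{j,\lambda_1},\ldots,B_{j,\lambda_R})$ (Proposition~\ref{prop:decomp}) exactly as you do, and then uses the invertibility of $Q$ together with the same ``injective composition iff inner map injective and $\ker$--$\ran$ intersection trivial'' argument; your explicit dimension check $\sum_{\lambda}d_\lambda=mn$ is a small point the paper leaves implicit.
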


Theorem~\ref{thm:stacked} separates the stacked recovery criterion into a spatial obstruction and a temporal obstruction. Thus the stacked system is not merely a black-box least-squares matrix. The full-column-rank condition on $B_{j,\lambda}$ is a spatial requirement on the aggregated $\lambda$-block, while the intersection condition describes the ambiguity left by the Vandermonde factor. This gives a structural refinement of the lateral-slice-wise stacked formulation in \cite{WangCaiHuang2024}. Rank failure can arise from an insufficient aggregated spatial block, from unresolved temporal ambiguity, or from the interaction of the two when the time window is short. More precisely, if $T<|\Lambda|$, then $\mathsf V_T(\Lambda)$ has a nontrivial kernel, and recovery depends on whether the range of the block spatial map intersects this temporal ambiguity. If $T\ge|\Lambda|$, then $\mathsf V_T(\Lambda)$ has full column rank, so the intersection condition becomes trivial and the criterion reduces to the full column rank of the aggregated spatial blocks $B_{j,\lambda}$.

Consequently, the stacked viewpoint contains both the finite-window obstruction relevant for short time horizons and the decoupled spatial obstruction that remains after temporal separation. The next subsection resolves this latter condition into frame conditions on explicit sensing vectors. The same stacked viewpoint also gives the following necessary counting conditions. Although $M_j(T)$ is written as a $Tmn\times mn$ matrix, each time block uses the same spatial sampling set $\Omega_j$.

Indeed, let $\mathcal{F}_n:=\rb{\wn^{-(s-1)(k-1)}}_{s,k=1}^n$ denote the DFT matrix. Then $\mathcal{F}_n^{-1}=n^{-1}\mathcal{F}_n^*$, and the Fourier-domain sampling matrix satisfies
\begin{equation*}
A_3(j)=(\mathcal{F}_n\otimes I_m)\mathbf S_{\Omega_j}^*\mathbf S_{\Omega_j}(\mathcal{F}_n^{-1}\otimes I_m),
\end{equation*}
up to the fixed ordering convention used in \eqref{eq:prelim-A3}. Hence
\begin{equation*}
\rank A_3(j)=\rank\rb{\mathbf S_{\Omega_j}^*\mathbf S_{\Omega_j}}=|\Omega_j|.
\end{equation*}
Using the rank subadditivity for vertically stacked matrices and the inequality $\rank(AB)\le \rank A$, we obtain
\begin{equation*}
\rank M_j(T)
\le \sum_{t=0}^{T-1}\rank\rb{A_3(j)A_1(t)}
\le T|\Omega_j|.
\end{equation*}
Therefore the full-column-rank condition $\rank M_j(T)=mn$ implies the necessary condition
\begin{equation*}
T|\Omega_j|\ge mn.
\end{equation*}
Moreover, since $B_{j,\lambda}$ is a column block of $A_3(j)Q$, the requirement that $B_{j,\lambda}$ have full column rank also implies
\begin{equation*}
|\Omega_j| \ge d_\lambda ,\qquad \lambda\in\Lambda.
\end{equation*}
Thus the stacked viewpoint already contains both the global counting obstruction and the blockwise spatial obstruction, while the decoupled formulation below identifies the latter as a frame condition on the aggregated eigenspace blocks.

\begin{corollary}\label{coro:conjecture}
If the tensor signal $\Xcal\in\mathbb{C}^{m\times p\times n}$ is uniquely recoverable from the spatio-temporal samples $\Psi_{\Xcal}$ (see~\eqref{eq:T-intro}), then
\begin{equation*}
\bigcup_{(i,j,k)\in\Omega}\cb{j} = [p].
\end{equation*}
Equivalently, $\Omega_j\ne\varnothing$ for every $j\in[p]$.
\end{corollary}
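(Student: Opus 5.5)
We argue by contraposition. Suppose that $\Omega_{j_0}=\varnothing$ for some $j_0\in[p]$. We will exhibit two distinct initial states that produce identical spatio-temporal samples $\Psi_\Xcal$.

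The key structural fact is that the t-product acts independently on lateral slices. By the lifted representation of Subsection~\ref{spatial}, $\mathrm{vec}([\Xcal_t]_{:,j,:})=\mathbf M^t\mathbf u_j$ with $\mathbf u_j=\mathrm{vec}([\Xcal]_{:,j,:})$. Hence the $j$th lateral slice of $\Xcal_t$ depends only on the $j$th lateral slice of $\Xcal$, for every $t\ge 0$. The samples taken at coordinates $(i,j,k)\in\Omega$ therefore split as $\{\mathbf S_{\Omega_j}\mathbf M^t\mathbf u_j\}_{j,t}$.

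Now fix any nonzero tensor $\mathcal Z\in\C^{m\times p\times n}$ supported on the lateral slice $j_0$, for instance $[\mathcal Z]_{1,j_0,1}=1$ with all other entries zero. Compare $\Xcal$ with $\Xcal+\mathcal Z$.
\begin{itemize}
\item For $j\neq j_0$, the lateral slices of the two tensors coincide. Their evolutions therefore agree at every entry with $j\ne j_0$.
\item For $j=j_0$, there are no samples at all, since $\Omega_{j_0}=\varnothing$.
\end{itemize}
Hence $\Psi_{\Xcal}=\Psi_{\Xcal+\mathcal Z}$ for every $T$, while $\Xcal\neq\Xcal+\mathcal Z$, contradicting unique recoverability.

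Equivalently, one can phrase this through Corollary~\ref{thm:spatial}. When $\Omega_{j_0}=\varnothing$, the observability matrix $\mathbf O_{j_0}$ has zero rows, so its rank cannot equal $mn\ge 1$. Alternatively, the necessary condition $|\Omega_{j_0}|T\ge mn$ fails outright.

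Finally, the equivalence between $\bigcup_{(i,j,k)\in\Omega}\{j\}=[p]$ and $\Omega_j\ne\varnothing$ for all $j$ is immediate from the definition \eqref{index1-intro}.

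There is no real obstacle here. The only point needing care is to state the lateral-slice decoupling of the t-product explicitly, which follows from the definition $\Acal*\Xcal=\operatorname{fold}(\operatorname{bcirc}(\Acal)\operatorname{unfold}(\Xcal))$ acting columnwise in $j$.
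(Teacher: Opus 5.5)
Your proof is correct, but it takes a different route from the paper's.

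The paper works in the Fourier-domain stacked formulation. If $\Omega_{j_0}=\varnothing$, then $[\widehat{P_\Omega}]_{\ell,j_0,:}=0$ for every $\ell$. Hence $A_2(j_0)=0$ and $A_3(j_0)=0$, so every block $B_{j_0,\lambda}$ of $A_3(j_0)Q$ vanishes. This contradicts the full-column-rank requirement in Theorem~\ref{thm:stacked}(ii). That step implicitly uses two facts: unique recovery is equivalent to $M_j(T)$ having full column rank for each $j$, and $d_\lambda\ge 1$.

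You argue directly in the spatial domain instead. You use that $\operatorname{bcirc}(\Acal)^t$ acts column by column on $\operatorname{unfold}(\Xcal)$. So a perturbation supported on lateral slice $j_0$ stays there for all $t$ and is invisible to the samples.

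Your route is more elementary and more general. It needs no diagonalizability, although Theorem~\ref{thm:stacked} and the factorization through $Q=WP$ are formulated under that assumption. It also needs no identification of recoverability with a rank condition, and it works for every $\Acal$ and every $T$. This matches the corollary as stated, which carries no spectral hypothesis.

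What the paper's route buys is narrative: it shows that Conjecture~1 of \cite{WangCaiHuang2024}, posed for the stacked system, is already encoded in the spatial obstruction $B_{j,\lambda}$ of the temporal--spectral factorization. Even there, the detour through $B_{j_0,\lambda}$ is unnecessary, since $A_3(j_0)=0$ already forces $M_{j_0}(T)=0$. That observation is essentially the Fourier-side counterpart of your argument and of your $\mathbf O_{j_0}$ remark.
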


In particular, Corollary~\ref{coro:conjecture} proves Conjecture~1 of \cite{WangCaiHuang2024}. In the special lattice case $\Omega=I\times J\times[n]$, it follows that $J=[p]$, which implies
\cite[Theorem 3.1]{WangCaiHuang2024}.

\subsection{Temporal decoupling and frame conditions on aggregated eigenspaces}\label{decoupled}

We now separate the temporal part of the recovery problem.
For each sampled spatial coordinate, the time samples form an exponential system indexed by the distinct eigenvalues in $\Lambda$. After this temporal system is inverted, the remaining recovery problem is spatial and takes place on the aggregated eigenspaces associated with those eigenvalues.

\begin{proposition}\label{prop:temp-regroup}
Assume that each frontal slice $[\widehat{\Acal}]_{:,:,s}$, $s\in[n]$, is diagonalizable as in \eqref{eq:diag}. Fix $j\in[p]$ and $(i,k)\in\Omega_j$, and define
\begin{equation*}
y_t(i,k):=[\Acal^t * \Xcal]_{i,j,k},
\qquad t=0,1,\ldots,T-1.
\end{equation*}
Then
\begin{equation}\label{sys1}
y_t(i,k)=\sum_{\lambda\in\Lambda}\lambda^t\, g_\lambda(i,k),
\end{equation}
where
\begin{equation}\label{eq:g-lambda-coord}
g_{\lambda}(i,k)=
\sum_{s\in \mathcal{I}_{\lambda}}
\frac{1}{n}\,\wn^{(s-1)(k-1)}
\ab{
U_{\lambda,s}^* e_i,\;
W_{\lambda,s}[\widehat{\Xcal}]_{:,j,s}
}.
\end{equation}
\end{proposition}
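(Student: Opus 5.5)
The plan is a direct computation in three moves: pass to the Fourier domain, diagonalize each slice, and return to spatial coordinates through the inverse DFT. Everything is slicewise up to the last step, where the eigenvalue-indexed regrouping appears.

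First, apply the t-product identity $[\widehat{\Xcal}_t]_{:,:,s}=[\widehat{\Acal}]_{:,:,s}^t[\widehat{\Xcal}]_{:,:,s}$ and restrict to the $j$-th column. This gives
\[
[\widehat{\Xcal}_t]_{:,j,s}=[\widehat{\Acal}]_{:,:,s}^t[\widehat{\Xcal}]_{:,j,s}.
\]
Substitute the diagonalization \eqref{eq:diag} and the spectral decomposition \eqref{eq:spec1}. Since $D_s^t=\sum_{\lambda\in\Lambda_s}\lambda^tP_{\lambda,s}$ and $U_sP_{\lambda,s}U_s^{-1}=U_{\lambda,s}W_{\lambda,s}$, we get
\[
[\widehat{\Acal}]_{:,:,s}^t=\sum_{\lambda\in\Lambda_s}\lambda^t\,U_{\lambda,s}W_{\lambda,s}.
\]
Here I need $P_{\lambda,s}$ to be the coordinate projector onto the $\lambda$-positions of the diagonal $D_s$. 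That holds because $D_s$ is diagonal, so the orthogonal projector onto $\ker(D_s-\lambda I)$ is a coordinate projector. I also need $(P_{\lambda,s})^t=P_{\lambda,s}$ and $P_{\lambda,s}P_{\mu,s}=0$ for $\lambda\ne\mu$, which justifies raising the sum to the $t$-th power termwise.

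Second, extract the $i$-th entry and apply the inverse DFT formula at index $k$:
\[
y_t(i,k)=\frac1n\sum_{s=1}^n\wn^{(s-1)(k-1)}\sum_{\lambda\in\Lambda_s}\lambda^t\,e_i^\top U_{\lambda,s}W_{\lambda,s}[\widehat{\Xcal}]_{:,j,s}.
\]
With the convention $\ab{x,y}=x^*y$, we have $e_i^\top U_{\lambda,s}z=(U_{\lambda,s}^*e_i)^*z=\ab{U_{\lambda,s}^*e_i,z}$, because $e_i$ is real.

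Third, interchange the double sum over $\{(s,\lambda):s\in[n],\lambda\in\Lambda_s\}$ into a sum over $\lambda\in\Lambda$ and $s\in\mathcal I_\lambda$. This is legitimate by the definitions \eqref{lambda1} and \eqref{C1}. Collecting the coefficient of $\lambda^t$ then yields exactly \eqref{sys1} with $g_\lambda(i,k)$ given by \eqref{eq:g-lambda-coord}.

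I expect the only delicate point to be bookkeeping of the conventions rather than any substantive obstacle. There are two things to get right: the sign and normalization of the inverse DFT ($\frac1n\wn^{+(s-1)(k-1)}$), and the conjugation in the inner product. Also note that $(i,k)\in\Omega_j$ plays no role in the identity itself; the formula holds for every coordinate.
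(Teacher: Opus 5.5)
Your proposal is correct and follows essentially the same route as the paper: inverse DFT along the third mode, slicewise Fourier-domain evolution, the expansion $U_sD_s^tU_s^{-1}=\sum_{\lambda\in\Lambda_s}\lambda^tU_{\lambda,s}W_{\lambda,s}$, rewriting $e_i^*U_{\lambda,s}z$ as $\ab{U_{\lambda,s}^*e_i,z}$, and regrouping the double sum over $\lambda\in\Lambda$, $s\in\mathcal I_\lambda$. Your added justifications (the coordinate-projector structure of $P_{\lambda,s}$ and the remark that $(i,k)\in\Omega_j$ is not needed for the identity) are accurate.
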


Proposition~\ref{prop:temp-regroup} shows how the slicewise Fourier dynamics merge into an aggregated spectral system.
For a fixed eigenvalue $\lambda$, the coefficient $g_\lambda(i,k)$ collects the slice contributions with indices in $\mathcal I_\lambda$.
Thus temporal separation occurs at the level of distinct eigenvalues, while the spatial component for each separated coefficient lives on the corresponding aggregated eigenspace.

Write
\begin{equation*}
\Lambda=\cb{\lambda_1,\ldots,\lambda_R}, \qquad R:=|\Lambda|,
\end{equation*}
and consider the Vandermonde matrix $\mathsf{V}_T(\Lambda)$ defined in \eqref{eq:vander}.
For each sampled sensor $(i,k)$, let
\begin{equation}\label{yg-system}
{\bf y}^{(i,k)}:=
\qm{ y_0(i,k)\\
y_1(i,k)\\
\vdots\\
y_{T-1}(i,k) }\in \C^{T} ,
\qquad
{\bf g}^{(i,k)}:=
\qm{ g_{\lambda_1}(i,k)\\
g_{\lambda_2}(i,k)\\
\vdots \\
g_{\lambda_R}(i,k) } \in \C^{R}.
\end{equation}
Then \eqref{sys1} is equivalent to
\begin{equation}\label{eq:temp-vand-threshold}
{\bf y}^{(i,k)}=\mathsf{V}_T(\Lambda)\,{\bf g}^{(i,k)}.
\end{equation}

Since the eigenvalues in $\Lambda$ are distinct, the matrix $\mathsf{V}_T(\Lambda)$ has full column rank if and only if $T\ge R$.
Therefore, once $T\ge |\Lambda|$, the temporal samples uniquely determine the coefficients $\cb{g_\lambda(i,k)}_{\lambda\in\Lambda}$ for each sampled sensor $(i,k)$.

Accordingly, the recovery problem reduces to a spatial spanning problem on each aggregated eigenspace block
\begin{equation*}
\bigoplus_{s\in\mathcal I_\lambda}\ran\rb{P_{\lambda,s}},
\qquad \lambda\in\Lambda.
\end{equation*}
Since all spaces in this paper are finite-dimensional, we use the term frame for a finite family that spans the ambient space, or equivalently has a positive lower frame bound.
This leads to the frame characterization stated in Theorem~\ref{thm:det}.

\begin{theorem}\label{thm:det}
Let $\Acal\in \C^{m\times m\times n}$ such that each frontal slice of its Fourier transform, $[\widehat{\Acal}]_{:,:,s}$, for $s\in [n]$, is diagonalizable as in \eqref{eq:diag},
and let $T \ge |\Lambda|$.
The signal $\Xcal$ is uniquely recovered from $\Psi_\Xcal$ if and only if, for each $j\in [p]$ and each $\lambda\in \Lambda$, the collection of vectors $\big\{ V_\lambda^{(i,k)} : (i,k)\in \Omega_j \big\}$ forms a frame for $\bigoplus_{s\in \mathcal{I}_\lambda}\ran\rb{P_{\lambda,s}}$, where
\begin{equation}\label{sensing1}
    V^{(i,k)}_\lambda =\bigoplus_{s\in \mathcal{I}_\lambda} \omega_n^{-(s-1)(k-1)} P_{\lambda,s} U_{s}^* e_i.
\end{equation}

\end{theorem}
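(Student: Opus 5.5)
The plan is to combine Proposition~\ref{prop:temp-regroup} with the invertibility of the Vandermonde factor, and then to identify the resulting spatial problem, block by block, with a frame condition. Recovery of $\Xcal$ is a linear injectivity question. Unique recovery therefore means exactly this: if all samples of $\Xcal$ vanish, then $\Xcal=0$. Since the DFT along the third mode is invertible and the data decouple over lateral slices, it suffices to fix $j$ and show the following. The map $[\widehat\Xcal]_{:,j,:}\mapsto \{y_t(i,k)\}_{(i,k)\in\Omega_j,\,t<T}$ is injective if and only if, for every $\lambda$, the family $\{V^{(i,k)}_\lambda\}_{(i,k)\in\Omega_j}$ spans $\bigoplus_{s\in\mathcal I_\lambda}\ran(P_{\lambda,s})$.

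\textbf{Step 1: temporal separation.} Since $T\ge|\Lambda|$, $\mathsf V_T(\Lambda)$ has full column rank. By \eqref{eq:temp-vand-threshold}, the samples at $(i,k)$ determine, and are determined by, the vector $\mathbf g^{(i,k)}$. Hence the samples vanish if and only if $g_\lambda(i,k)=0$ for all $\lambda\in\Lambda$ and all $(i,k)\in\Omega_j$.

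\textbf{Step 2: parametrize the unknown blockwise.} For each $\lambda$, set
\[
c_\lambda := \bigoplus_{s\in\mathcal I_\lambda} W_{\lambda,s}[\widehat\Xcal]_{:,j,s}.
\]
I would regard $c_\lambda$ as an element of $\bigoplus_s\ran(P_{\lambda,s})$. This works because $W_{\lambda,s}$ produces the coordinates of the $\lambda$-component of $[\widehat\Xcal]_{:,j,s}$ in the basis of $D_s$, and these coordinates are supported on $\ran(P_{\lambda,s})$. The key observation is that the map
\[
[\widehat\Xcal]_{:,j,:}\mapsto (c_\lambda)_{\lambda\in\Lambda}
\]
is a bijection onto $\prod_\lambda \bigoplus_{s\in\mathcal I_\lambda}\ran(P_{\lambda,s})$. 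The reason is that, for each $s$, the vector $U_s^{-1}[\widehat\Xcal]_{:,j,s}$ is an arbitrary vector of $\C^m$, and it splits as $\sum_{\lambda\in\Lambda_s}P_{\lambda,s}U_s^{-1}[\widehat\Xcal]_{:,j,s}$. Consequently, the blocks $c_\lambda$ are independent free parameters, and $\Xcal=0$ if and only if every $c_\lambda=0$.

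\textbf{Step 3: rewrite $g_\lambda$ as an inner product.} Rewriting \eqref{eq:g-lambda-coord} in the padded coordinates $P_{\lambda,s}U_s^{-1}[\widehat\Xcal]_{:,j,s}$ gives
\[
g_\lambda(i,k)=\tfrac1n\sum_{s}\wn^{(s-1)(k-1)}\ab{P_{\lambda,s}U_s^*e_i,\,P_{\lambda,s}U_s^{-1}[\widehat\Xcal]_{:,j,s}}=\tfrac1n\ab{V_\lambda^{(i,k)},c_\lambda},
\]
where the conjugation of $\wn^{-(s-1)(k-1)}$ in the first slot of the inner product produces the phase $\wn^{(s-1)(k-1)}$.

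\textbf{Step 4: conclude.} By Steps 1 and 3, zero samples are equivalent to $c_\lambda\perp V_\lambda^{(i,k)}$ for all $(i,k)\in\Omega_j$ and all $\lambda$. For the sufficiency direction, the frame (spanning) property forces each $c_\lambda=0$, so $\Xcal=0$ by Step 2. For the necessity direction, suppose that for some $j$ and $\lambda$ the family fails to span. Then choose a nonzero $c_\lambda$ in the orthogonal complement within the block, set all other blocks to $0$, and use the surjectivity from Step 2 to realize a nonzero $\Xcal$ (supported on lateral slice $j$) whose samples all vanish. This contradicts uniqueness.

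I expect the main obstacle to be the bookkeeping in Steps 2 and 3. One must reconcile $U_{\lambda,s}^*e_i$ and $W_{\lambda,s}$ with the padded form $P_{\lambda,s}U_s^*e_i$ in \eqref{sensing1}, getting the indexing and conjugation right. One must also verify carefully that the blocks $c_\lambda$ range independently over the full product space; this independence is what makes the necessity direction work.
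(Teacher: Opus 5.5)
Your proposal is correct and follows essentially the same route as the paper's proof. Full column rank of $\mathsf V_T(\Lambda)$ forces $g_\lambda(i,k)=0$, which is rewritten as $\langle V_\lambda^{(i,k)},X_\lambda^j\rangle=0$ with $X_\lambda^j=\bigoplus_{s\in\mathcal I_\lambda}P_{\lambda,s}U_s^{-1}[\widehat{\Xcal}]_{:,j,s}$; necessity comes from setting $[\widehat{\Xcal}]_{:,j_0,s}=U_sb_s$ for a nonzero $b$ orthogonal to the sensing family. Your explicit bijection in Step 2 simply makes precise the independence of the blocks $X_\lambda^j$ that the paper's necessity construction uses implicitly.
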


Theorem~\ref{thm:det} gives the decoupled recovery criterion.
The temporal requirement is exactly $T\ge|\Lambda|$, the number of distinct eigenvalues across all frontal slices. After this threshold is reached, recoverability is determined by spatial frame conditions on the aggregated eigenspace blocks.
In particular, shared eigenvalues increase the spatial dimension $d_\lambda$ of the corresponding block, and therefore increase the sampling burden for that eigenvalue.

\begin{remark}[Reduction to the classical dynamical sampling model]
Theorem~\ref{thm:det} subsumes the classical dynamical sampling model within the proposed tensor framework.
Indeed, setting the third-mode dimension to $n=1$ reduces the tensor algebra to the standard matrix algebra: the $t$-product coincides with ordinary matrix multiplication, and the tensors $\Acal$ and $\Xcal$ become matrices $A\in\C^{m\times m}$ and $X\in\C^{m\times p}$, respectively. 
In this specialization, the evolution \eqref{evol1-intro} decouples across matrix columns as
\begin{equation*}
X_t(:,j)=A^t X(:,j) ,
\end{equation*}
so that the tensor dynamical sampling problem reduces to
$p$ independent classical dynamical sampling systems.

In this specialization, the Fourier representation along the third mode is trivial, since there is only one frontal slice; hence $\Acal=\widehat{\Acal}$.
Moreover, $\mathcal{I}_\lambda=\{1\}$ for every eigenvalue $\lambda$, and the vectors appearing in Theorem~\ref{thm:det} reduce to
\begin{equation*}
    V_\lambda^{(i,1)}=P_{\lambda,1}U_1^*e_i.
\end{equation*}
Thus, the   condition in Theorem~\ref{thm:det} becomes
\begin{equation*}
    \operatorname{span}\{P_{\lambda,1}U_1^*e_i:(i,1)\in\Omega_j\}
    =
    \operatorname{ran}(P_{\lambda,1}),
    \qquad \lambda\in\Lambda=\Lambda_1.
\end{equation*}
This is precisely the classical frame condition for dynamical sampling; see, for example, \cite[Theorem 2.3]{aldroubi2017dynamical}.
\end{remark}

\begin{corollary}
Suppose that no eigenvalue is shared by two distinct frontal slices of $\widehat{\Acal}$, that is, $\lambda\in\Lambda_s$ and $\lambda\in\Lambda_{s'}$ imply $s=s'$.
Then condition of Theorem~\ref{thm:det} is equivalent to the requirement that for each $j\in[p]$, $s\in[n]$, and $\lambda\in\Lambda_s$,
the set of vectors
\begin{equation*}
\cb{P_{\lambda,s}U_s^*e_i : i\in I_j}
\end{equation*}
form a frame for the eigenspace $\ran(P_{\lambda,s})$, where
\begin{equation*}
I_j=\cb{i\in[m] : (i,j,k)\in\Omega \text{ for some } k\in[n] }.
\end{equation*}
\end{corollary}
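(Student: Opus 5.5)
The plan is to specialize Theorem~\ref{thm:det} directly. Its standing hypotheses stay in force: each $[\widehat{\Acal}]_{:,:,s}$ is diagonalizable as in \eqref{eq:diag}, and $T\ge|\Lambda|$. I will then show that, slice by slice, the frame condition there collapses to the stated one.

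First I reindex the spectral data. By the hypothesis, for every $\lambda\in\Lambda$ the index set $\mathcal I_\lambda$ in \eqref{C1} is a singleton $\{s(\lambda)\}$. The map $\lambda\mapsto(s(\lambda),\lambda)$ is therefore a bijection from $\Lambda$ onto the pairs $\{(s,\lambda): s\in[n],\ \lambda\in\Lambda_s\}$. So quantifying over $\lambda\in\Lambda$ is the same as quantifying over $s\in[n]$ and $\lambda\in\Lambda_s$. Moreover, for $s=s(\lambda)$ the aggregated block is just $\bigoplus_{s'\in\mathcal I_\lambda}\ran(P_{\lambda,s'})=\ran(P_{\lambda,s})$, with $d_\lambda=m_{\lambda,s}$. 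The sensing vectors \eqref{sensing1} reduce to
\begin{equation*}
V_\lambda^{(i,k)}=\wn^{-(s-1)(k-1)}\,P_{\lambda,s}U_s^*e_i .
\end{equation*}

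The key step is to compare spans. Since all spaces are finite-dimensional, "frame" means "spanning family", as the paper notes before Theorem~\ref{thm:det}. The scalar $\wn^{-(s-1)(k-1)}$ is unimodular, hence nonzero, so each $V_\lambda^{(i,k)}$ spans the same line as $P_{\lambda,s}U_s^*e_i$, and this line depends only on $i$. Consequently
\begin{equation*}
\operatorname{span}\{V_\lambda^{(i,k)}:(i,k)\in\Omega_j\}
=\operatorname{span}\{P_{\lambda,s}U_s^*e_i : i\in\pi_1(\Omega_j)\},
\end{equation*}
where $\pi_1(\Omega_j)=\{i:\exists k,\ (i,k)\in\Omega_j\}$. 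By \eqref{index1-intro}, $\pi_1(\Omega_j)$ is exactly $I_j$. Repeated vectors arising from several $k$ with the same $i$ only duplicate lines and do not affect spanning.

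Combining the two steps, the condition of Theorem~\ref{thm:det} for the pair $(j,\lambda)$ holds if and only if $\{P_{\lambda,s}U_s^*e_i:i\in I_j\}$ spans, i.e. is a frame for, $\ran(P_{\lambda,s})$ with $s=s(\lambda)$. Ranging over all $j$ and using the bijection, this gives the claimed equivalence. There is no real obstacle here. The only points needing care are checking that $I_j$ is precisely the first-coordinate projection of $\Omega_j$, and noting that frame bounds themselves may change with multiplicities although the spanning property does not.
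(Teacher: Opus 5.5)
Your argument is correct and is the same specialization of Theorem~\ref{thm:det} the paper intends (the paper gives no separate proof, only the remark that the conditions decouple across slices): since $\mathcal I_\lambda$ is a singleton, the sensing vectors are unimodular multiples of $P_{\lambda,s}U_s^*e_i$, so their span depends only on the first-coordinate projection of $\Omega_j$, which is $I_j$. Your reindexing of $\lambda\in\Lambda$ as pairs $(s,\lambda)$ with $\lambda\in\Lambda_s$, and your remark that only the spanning property (not the frame bounds) carries over, are the right points of care.
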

In this case, the recovery conditions decouple across frontal slices, and the sampling requirement depends only on the eigenspace structure within each slice.

\begin{corollary}
If the condition in Theorem~\ref{thm:det} holds, then for each
$j\in[p]$ the number of spatial samples must satisfy
\begin{equation*}
|\Omega_j| \ge
\max_{\lambda\in\Lambda}\, d_\lambda,
\end{equation*}
where $d_\lambda$ is given in \eqref{dim1} and it represents the total multiplicity of the eigenvalue $\lambda$
aggregated across all frontal slices of $\widehat{\Acal}$ that share this eigenvalue.
\end{corollary}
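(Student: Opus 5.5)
The plan is to read the claim off the frame condition of Theorem~\ref{thm:det} by a dimension count. Fix $j\in[p]$ and $\lambda\in\Lambda$. By hypothesis, the finite family $\{V_\lambda^{(i,k)}:(i,k)\in\Omega_j\}$ is a frame for
\begin{equation*}
\mathcal H_\lambda:=\bigoplus_{s\in\mathcal I_\lambda}\ran\rb{P_{\lambda,s}} .
\end{equation*}
In finite dimensions, the paper's convention identifies a frame with a spanning family, so this family spans $\mathcal H_\lambda$.

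A spanning family of a vector space has cardinality at least the dimension of that space. The family has at most $|\Omega_j|$ members: it is indexed by $\Omega_j$, and repeated vectors can only lower the count. Hence $|\Omega_j|\ge\dim\mathcal H_\lambda$.

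Next I will check that $\dim\mathcal H_\lambda=d_\lambda$. The direct sum is external, realized as block concatenation in $\C^{d_\lambda}$ through the canonical identification fixed in Section~\ref{prelim:2.2}. Its dimension is therefore the sum of the dimensions of the summands, $\sum_{s\in\mathcal I_\lambda}\rank(P_{\lambda,s})=\sum_{s\in\mathcal I_\lambda}m_{\lambda,s}=d_\lambda$, by \eqref{dim1}. The only point needing care is that the summands are not subspaces of a common space that could overlap. Each $V_\lambda^{(i,k)}$ in \eqref{sensing1} places its $s$-component in its own block, so there is no cancellation across slices.

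Since $\lambda\in\Lambda$ was arbitrary, this gives $|\Omega_j|\ge d_\lambda$ for every $\lambda$, and taking the maximum over the finite set $\Lambda$ yields the stated bound for each $j$. There is no real obstacle; the only step to state carefully is the dimension identification of the direct sum. An alternative route is the stacked viewpoint: Theorem~\ref{thm:stacked} together with $\rank A_3(j)=|\Omega_j|$ gives the same bound, since the block $B_{j,\lambda}$ must have full column rank $d_\lambda$.
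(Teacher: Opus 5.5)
Your proof is correct and is essentially the paper's argument: the bound follows from the frame condition by a dimension count, since a family indexed by $\Omega_j$ that spans the $d_\lambda$-dimensional external direct sum $\bigoplus_{s\in\mathcal I_\lambda}\ran\rb{P_{\lambda,s}}$ must have at least $d_\lambda$ members. Your stacked alternative is the version the paper records after Theorem~\ref{thm:stacked}: $B_{j,\lambda}$ is a $d_\lambda$-column block of $A_3(j)Q$ and $\rank A_3(j)=|\Omega_j|$. To start that route from the frame hypothesis, you would first need the easy equivalence between the frame condition for $\lambda$ and full column rank of $B_{j,\lambda}$.
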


This lower bound shows how cross-slice eigenvalue sharing increases the spatial sampling burden. Indeed, when several frontal slices contribute to the same eigenvalue $\lambda$, the relevant block dimension becomes $d_\lambda=\sum_{s\in\mathcal I_\lambda}m_{\lambda,s}$, and each lateral slice must contain enough sampled spatial coordinates to cover the largest aggregated block.

The decoupled sensing vectors also explain how sampling failure is tied to the two tensor coordinates. For a fixed shared eigenvalue $\lambda$, each sampled coordinate $(i,k)$ contributes
\begin{equation*}
V_\lambda^{(i,k)}=\bigoplus_{s\in\mathcal I_\lambda}\omega_n^{-(s-1)(k-1)}P_{\lambda,s}U_s^*e_i .
\end{equation*}
Thus the mode-$1$ index $i$ changes the eigenspace components inside the slice blocks, while the mode-$3$ index $k$ changes the relative Fourier phases among the blocks indexed by $s\in\mathcal I_\lambda$. The next example makes this distinction concrete. It gives two finite masks for one shared-eigenvalue block: one fails because it misses a mode-$1$ direction, and the other fails because it has too few mode-$3$ phase patterns.

\begin{example}[Mode-resolved completion of a shared-eigenvalue block]\label{ex:mode-completion}
Fix one lateral slice $j$ and one shared eigenvalue $\lambda$. Let $m=7$, $n=9$, $\mathcal I_\lambda=\cb{1,2,3}$, and $m_{\lambda,1}=m_{\lambda,2}=m_{\lambda,3}=1$, so that $d_\lambda=3$. We identify $\bigoplus_{s\in\mathcal I_\lambda}\ran(P_{\lambda,s})$ with $\mathbb C^3$ in the order $s=1,2,3$.

First, we construct a mask that fails because it does not see enough mode-$1$ directions. Let $i_a=2$, $i_b=4$, and $i_c=6$. After identifying each one-dimensional space $\ran(P_{\lambda,s})$ with $\mathbb C$, assume that
\begin{equation*}
P_{\lambda,s}U_s^*e_{i_r}=
\begin{cases}
1, & (s,r)=(1,a),(2,b),(3,c),\\
0, & \text{otherwise}.
\end{cases}
\end{equation*}
Set
\begin{align*}
\Omega_j:=\cb{(2,3),(2,4),(2,6),(2,9),(4,2),(4,5),(4,6),(4,8)}.
\end{align*}
Then every vector $V_\lambda^{(i,k)}$ with $(i,k)\in\Omega_j$ lies in $\operatorname{span}\cb{(1,0,0),(0,1,0)}$, and hence the $\lambda$-block rank is $2$. If we add one sample at $(i_c,k_\star)=(6,8)$ and set
\begin{equation*}
\Omega_j^\sharp:=\Omega_j\cup\cb{(6,8)},
\end{equation*}
then
\begin{equation*}
V_\lambda^{(6,8)}=\rb{0,0,\omega_9^{-14}}.
\end{equation*}
Thus the augmented family has rank $3=d_\lambda$, so the blockwise frame condition is restored.

Second, we construct a mask that fails because it has too few mode-$3$ phase patterns. Assume instead that, for each selected row $i$,
\begin{equation*}
P_{\lambda,s}U_s^*e_i=1,\qquad s=1,2,3.
\end{equation*}
Then
\begin{equation*}
V_\lambda^{(i,k)}=\rb{1,\omega_9^{-(k-1)},\omega_9^{-2(k-1)}}.
\end{equation*}
Set $\Omega_j:=\cb{(1,3),(2,3),(3,3),(4,3),(6,3),(1,6),(3,6),(4,6),(5,6),(7,6)}$.
Only the two mode-$3$ indices $k_0=3$ and $k_1=6$ occur, so the phase rank is at most $2$. If we add one sample at $(i_\star,k_2)=(5,8)$ and set
\begin{equation*}
\Omega_j^\sharp:=\Omega_j\cup\cb{(5,8)},
\end{equation*}
then the three phase rows are
\begin{equation*}
\rb{1,\omega_9^{-2},\omega_9^{-4}},\qquad \rb{1,\omega_9^{-5},\omega_9^{-10}},\qquad \rb{1,\omega_9^{-7},\omega_9^{-14}}.
\end{equation*}
Since the nodes $\omega_9^{-2}$, $\omega_9^{-5}$, and $\omega_9^{-7}$ are distinct, the corresponding $3\times3$ Vandermonde matrix is nonsingular. Hence the phase rank increases from $2$ to $3=d_\lambda$, and the blockwise frame condition is restored.

Equivalently, both failures are detected by the deterministic Gram matrix
\begin{equation*}
G_{j,\lambda}(\Omega_j):=\sum_{(i,k)\in\Omega_j}V_\lambda^{(i,k)}\rb{V_\lambda^{(i,k)}}^*.
\end{equation*}
In each case, the initial mask gives $\rank\, G_{j,\lambda}(\Omega_j)=2$, whereas the augmented mask gives $\rank\, G_{j,\lambda}(\Omega_j^\sharp)=3=d_\lambda$.
\end{example}

Theorem~\ref{thm:det} and the corollaries above give the deterministic recovery picture in the diagonalizable setting. They identify the temporal injectivity threshold $T\ge|\Lambda|$, the aggregated eigenspace dimensions $d_\lambda$ that determine the spatial sampling burden, and the blockwise frame condition that detects whether a given sampling mask is sufficient. Example~\ref{ex:mode-completion} shows that this condition depends not only on the number of sampled entries, but also on how the samples are distributed across the mode-$1$ and mode-$3$ coordinates.

\subsection{Reconstruction viewpoints from the stacked and decoupled systems}
\label{subsec:reconstruction-viewpoints}

The deterministic analysis above identifies the spectral structure hidden inside the stacked recovery system. It also clarifies two reconstruction viewpoints that are useful in different regimes. The first works directly with the finite-horizon stacked systems and is the computational form used for the real-data Tensor DS experiments. The second uses the decoupled spectral representation and is useful for isolating the mechanisms tested in the synthetic experiments, especially temporal separation, blockwise spatial recovery, and Vandermonde conditioning.

The stacked formulation recovers the Fourier coefficient vector of each lateral slice directly from
\begin{equation*}
M_j(T)x(j)=\mathbf b_j(T),
\end{equation*}
where $M_j(T)$ and $\mathbf b_j(T)$ are defined in \eqref{eq:prelim-Mj}. In the noiseless full-rank case this system gives exact recovery. In numerical experiments, the same finite-horizon system is solved in the regularized least-squares form
\begin{equation*}
\widehat x(j)\in \operatorname*{arg\,min}_{x\in\C^{mn}} \norm{M_j(T)x-\mathbf b_j(T)}_2^2+\rho\norm{x}_2^2 .
\end{equation*}
The recovered vectors $\widehat x(j)$, $j=1,\ldots,p$, are then assembled into the Fourier-domain tensor and transformed back by the inverse DFT along the third mode. This is the reconstruction viewpoint used by the original three-dimensional Tensor DS implementation \cite{WangCaiHuang2024} and by the real-data experiments in this paper.

This stacked formulation has an important practical advantage. It only requires the finite-horizon matrices generated by the learned t-product dynamics. It does not require explicit diagonalization of the learned Fourier-domain frontal slices, nor a numerical grouping of eigenvalues that may be close but not exactly identical. Thus the stacked system is the natural computational object when the operator is learned from data and the available time horizon is short. The spectral results above should therefore be interpreted as revealing the internal geometry of this stacked system, rather than as requiring the real-data implementation to explicitly diagonalize the learned dynamics.

The decoupled formulation gives a second reconstruction viewpoint. Assume that the Fourier-domain frontal slices are diagonalizable and that $T\ge |\Lambda|$. For each sampled coordinate $(i,k)\in\Omega_j$, Proposition~\ref{prop:temp-regroup} gives
\begin{equation*}
{\bf y}^{(i,k)}=\mathsf V_T(\Lambda){\bf g}^{(i,k)}.
\end{equation*}
Since $\mathsf V_T(\Lambda)$ has full column rank in this regime, the temporal coefficients can be separated by
\begin{equation*}
\widehat{\bf g}^{(i,k)}=\mathsf V_T(\Lambda)^\dagger{\bf y}^{(i,k)} .
\end{equation*}
For each $\lambda\in\Lambda$, the corresponding component of the separated data then gives the spatial system
\begin{equation*}
\widehat g_\lambda(i,k)=\ab{{\bf v}_\lambda^{(i,k)},{\bf x}_\lambda^j}, \qquad (i,k)\in\Omega_j,
\end{equation*}
and the aggregated block can be recovered by
\begin{equation*}
\widehat{\bf x}_\lambda^j\in \operatorname*{arg\,min}_{{\bf x}\in\C^{d_\lambda}} \sum_{(i,k)\in\Omega_j}\vb{\widehat g_\lambda(i,k)-\ab{{\bf v}_\lambda^{(i,k)},{\bf x}}}^2 .
\end{equation*}
After this is done for all $\lambda\in\Lambda$ and all $j\in[p]$, the recovered spectral blocks are mapped back through the slice-wise eigenvector matrices and the inverse DFT.

This two-stage formulation is useful for interpreting the synthetic tests, where the spectral structure can be controlled by construction. It separates the two mechanisms in the theory. The Vandermonde solve isolates the temporal spectral coefficients, while the second-stage least-squares problems test the frame condition on the aggregated eigenspace blocks. This is also the viewpoint that leads directly to the randomized Gram matrices studied in Section~\ref{sec:rand} and to the temporal-conditioning experiment in Subsection~\ref{sec:synthetic}.

For learned real-data dynamics, however, the stacked formulation is the appropriate computational object. The decoupled formulation applies only after the temporal threshold $T\ge|\Lambda|$ has been reached, and it depends on a reliable spectral decomposition and eigenvalue grouping. Moreover, even when $T\ge|\Lambda|$, the inverse Vandermonde step may be poorly conditioned. The stability analysis in Section~\ref{sec:stability} quantifies this obstruction. Thus the decoupled formulation explains the spectral recovery mechanism, while the stacked formulation remains the practical reconstruction framework used in the real-data experiments.

\section{Random sampling robustness via Gram matrix concentration}\label{sec:rand}
In this section, we convert the deterministic injectivity condition in Theorem~\ref{thm:det} into a high-probability statement under independent random spatial sampling.
By Proposition~\ref{prop:temp-regroup}, once $T\ge |\Lambda|$, the temporal samples determine the coefficients
$\cb{g_\lambda(i,k)}_{\lambda\in\Lambda}$, and the recovery problem reduces to a spatial sensing problem on each aggregated $\lambda$-eigenspace block.
To analyze this condition probabilistically, we now pass to the coordinate realization of the block variables and sensing vectors appearing in Theorem~\ref{thm:det}.

For each $j\in[p]$ and $\lambda\in\Lambda$, let
$X_\lambda^j \in \bigoplus_{s\in\mathcal I_\lambda}\ran\rb{P_{\lambda,s}}$
denote the aggregated signal block, and let
$V_\lambda^{(i,k)} \in \bigoplus_{s\in\mathcal I_\lambda}\ran\rb{P_{\lambda,s}}$
be the sensing vectors from \eqref{sensing1}.
Under the coordinate identification induced by the bases
$U_{\lambda,s}$, $s\in\mathcal I_\lambda$, we write their coordinate representations as
${\bf x}_\lambda^j\in\C^{d_\lambda}$ and
${\bf v}_\lambda^{(i,k)}\in\C^{d_\lambda}$, respectively, namely
\begin{align}
{\bf x}_\lambda^j &:=
\bigoplus_{s\in\mathcal I_\lambda}
W_{\lambda,s}[\widehat{\Xcal}]_{:,j,s}
\in \C^{d_\lambda}, \label{sensing3} \\
{\bf v}_\lambda^{(i,k)} &:=
\frac{1}{n}
\bigoplus_{s\in\mathcal I_\lambda}
\wn^{-(s-1)(k-1)}\, U_{\lambda,s}^* e_i
\in \C^{d_\lambda}. \label{sensing2}
\end{align}
With this notation, the coefficient formula in Proposition~\ref{prop:temp-regroup} becomes
\begin{equation*}
g_\lambda(i,k)=\langle {\bf v}_\lambda^{(i,k)}, {\bf x}_\lambda^j \rangle, \qquad (i,k)\in\Omega_j,
\end{equation*}
Therefore, for fixed $j\in[p]$ and $\lambda\in\Lambda$, the frame condition in Theorem~\ref{thm:det} is equivalent to the positive definiteness of the associated Gram matrix
\begin{equation*}
    G_\lambda^j := n \sum_{(i,k)\in\Omega_j} {\bf v}_\lambda^{(i,k)}\rb{{\bf v}_\lambda^{(i,k)}}^*.
\end{equation*}
We now study this condition under an independent Bernoulli sampling model.

\subsection{Bernoulli sampling and population geometry}

We impose an independent Bernoulli sampling model where each sensor location $(i,j,k)$ is selected with probability $\alpha$. Specifically, let $\delta_{i,j,k}\sim\mathrm{Bernoulli}(\alpha)$ be i.i.d.\ random variables, and define the sampling set as
\begin{equation*}
    \Omega = \cb{ (i,j,k)\in [m]\times [p] \times [n] : \delta_{i,j,k}=1 }.
\end{equation*}
The  empirical Gram matrix can then be expressed as
\begin{equation*}
G_\lambda^j=n \sum_{i=1}^m \sum_{k=1}^n\delta_{i,j,k}
{\bf v}_\lambda^{(i,k)}({\bf v}_\lambda^{(i,k)})^*.
\end{equation*}
To analyze its stability, we define the population (deterministic) Gram matrix $\Sigma_\lambda$ as
\begin{equation*}
\Sigma_\lambda= n  \sum_{i=1}^m \sum_{k=1}^n
{\bf v}_\lambda^{(i,k)}({\bf v}_\lambda^{(i,k)})^*.
\end{equation*}
Then $\mathbb{E}[G_\lambda^j] = \alpha \Sigma_\lambda$.

\begin{lemma}\label{lem:diag}
Fix $\lambda\in\Lambda$ and let $\mathcal{I}_\lambda=\{s_1,\dots,s_r\}\subseteq[n]$.
Then $\Sigma_\lambda$ is $r\times r$ block diagonal matrix with different block sizes $m_{\lambda,s_i}$, $i=1,\ldots,r$.
More precisely, we have
\begin{equation*}
    \Sigma_\lambda = \mathrm{diag} \rb{ U_{\lambda,s_1}^* U_{\lambda,s_1}, \ldots, U_{\lambda,s_r}^* U_{\lambda,s_r} }.
\end{equation*}
\end{lemma}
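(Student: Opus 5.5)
The plan is to compute $\Sigma_\lambda$ block by block, using the coordinate realization \eqref{sensing2} of the sensing vectors. Partition $\Sigma_\lambda$ into blocks $(\Sigma_\lambda)_{a,b}\in\C^{m_{\lambda,s_a}\times m_{\lambda,s_b}}$ according to the fixed ordering $s_1,\dots,s_r$ of $\mathcal I_\lambda$. By \eqref{sensing2}, the $a$-th block component of ${\bf v}_\lambda^{(i,k)}$ is $\frac1n\,\wn^{-(s_a-1)(k-1)}U_{\lambda,s_a}^*e_i$. Hence
\begin{equation*}
(\Sigma_\lambda)_{a,b}
= n\sum_{i=1}^m\sum_{k=1}^n \frac{1}{n^2}\,\wn^{-(s_a-1)(k-1)}\,\overline{\wn^{-(s_b-1)(k-1)}}\,U_{\lambda,s_a}^*e_ie_i^*U_{\lambda,s_b}.
\end{equation*}
The summand factors into a part depending only on $k$ and a part depending only on $i$. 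The double sum therefore splits into a product of a scalar phase sum and a matrix sum.

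The first step is the sum over $i$. Since $\sum_{i=1}^m e_ie_i^*=I_m$, the matrix factor is $U_{\lambda,s_a}^*U_{\lambda,s_b}$.

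The second step is the sum over $k$. Using $\overline{\wn}=\wn^{-1}$, it equals
\begin{equation*}
\frac1n\sum_{k=1}^n \wn^{(s_b-s_a)(k-1)}.
\end{equation*}
This is the standard discrete orthogonality of characters. It equals $1$ when $s_a\equiv s_b \pmod n$, and $0$ otherwise, because $\wn^{s_b-s_a}$ is then an $n$-th root of unity different from $1$. Since $s_a,s_b\in[n]$, we have $|s_b-s_a|<n$, so the congruence holds only for $a=b$.

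Combining the two steps, the off-diagonal blocks vanish and the diagonal blocks are $U_{\lambda,s_a}^*U_{\lambda,s_a}$ of size $m_{\lambda,s_a}$. This is exactly the claimed block-diagonal form.

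There is no real obstacle. The only points needing care are bookkeeping:
\begin{itemize}
\item the normalization: the factor $n$ in $\Sigma_\lambda$ and the $1/n^2$ from the two copies of ${\bf v}_\lambda^{(i,k)}$ combine to the $1/n$ that normalizes the phase sum;
\item the conjugation convention $\langle x,y\rangle=x^*y$, which determines which phase gets conjugated;
\item the use of distinct elements $s_a\neq s_b$ of $\mathcal I_\lambda\subseteq[n]$, which keeps the root of unity in the off-diagonal phase sums nontrivial.
\end{itemize}
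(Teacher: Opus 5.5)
Your proposal is correct and follows essentially the same route as the paper's proof. Like the paper, you compute each block of $\Sigma_\lambda$ from \eqref{sensing2}, collapse the sum over $i$ using $\sum_i e_ie_i^*=I_m$, and kill the off-diagonal blocks with the discrete orthogonality $\sum_{k=1}^n \omega_n^{-(s-s')(k-1)}=n\,\mathbf 1_{\{s=s'\}}$. Your bookkeeping of the $n\cdot n^{-2}$ normalization and of the conjugated phase matches the paper's computation.
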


Since each $U_{\lambda,s}$ has full column rank, the matrices $U_{\lambda,s}^* U_{\lambda,s}$ are positive definite for all $s\in\mathcal I_\lambda$. Hence, by Lemma~\ref{lem:diag}, $\Sigma_\lambda$ is positive definite on $\C^{d_\lambda}$. We therefore define
\begin{equation*}
    \gamma_{\lambda,\min}:=\lambda_{\min}(\Sigma_\lambda) = \min_{s\in \mathcal{I}_\lambda} \,\lambda_{\min}\rb{ U_{\lambda,s}^* U_{\lambda,s} } >0.
\end{equation*}

To convert the deterministic positivity of $\Sigma_\lambda$ into a high-probability lower bound for the random Gram matrix $G_\lambda^j$, two ingredients are needed. The first is the population conditioning parameter $\gamma_{\lambda,\min}$, which measures how well conditioned the aggregated eigenspace block is at the deterministic level. The second is a uniform bound on the size of a single random summand, which determines the concentration scale in the matrix Chernoff estimate. The next subsection introduces the corresponding incoherence parameter and derives the required spectral bound.

\subsection{Incoherence and Matrix Chernoff bound}

Define
\begin{equation*}
    \mu_{\lambda,s} := \frac{m}{m_{\lambda,s}}  \max_{i\in [m]} \norm{ U_{\lambda,s}^* e_i }_2^2
\end{equation*}
and $\mu_\lambda := \max_{s\in \mathcal{I}_\lambda} \mu_{\lambda,s}$.
Small $\mu_\lambda$ corresponds to spatial spreading.

Then
\begin{equation}\label{incoherence1}
R_\lambda:= n \max_{(i,k)}\litnorm{{\bf v}_\lambda^{(i,k)}}_2^2 = \frac{1}{n}\max_{i\in [m]} \sum_{s\in \mathcal{I}_\lambda} \norm{U_{\lambda,s}^*e_i}_2^2
\le\frac{\mu_\lambda d_\lambda}{mn}.
\end{equation}

Define
\begin{equation*}
X_{i,k}=n\, \delta_{i,j,k}
{\bf v}_\lambda^{(i,k)}
({\bf v}_\lambda^{(i,k)})^*.
\end{equation*}

Then $X_{i,k}\succeq0$ and the matrices $\{X_{i,k}\}$ are independent.
Moreover,
\begin{equation*}
\lambda_{\max}(X_{i,k})\le R_\lambda= n
\max_{(i,k)} \|{\bf v}_\lambda^{(i,k)}\|_2^2.
\end{equation*}

We also have
\begin{equation*}
\mathbb E[G_\lambda^j]= \alpha \Sigma_\lambda.
\end{equation*}

We are now in a position to apply the matrix Chernoff inequality to the sum $G_\lambda^j=\sum_{i,k} X_{i,k}$. The population lower bound is controlled by $\gamma_{\lambda,\min}$, while the size of a single random summand is controlled by $R_\lambda$. This gives a high-probability lower bound for the minimum eigenvalue of $G_\lambda^j$.

\begin{theorem}\label{thm:union}
Let $\eta,\varepsilon \in (0,1)$, and assume that $T\ge |\Lambda|$. If the sampling rate satisfies
\begin{equation}\label{eq:alpha_uniform}
    \alpha \ge \max_{\lambda \in \Lambda} \frac{3 R_\lambda}{\eta^2 \gamma_{\lambda, \min}} \log \left( \frac{d_\lambda p |\Lambda|}{\varepsilon} \right),
\end{equation}
then with probability at least $1-\varepsilon$, 
\begin{equation*}
    \lambda_{\min}(G_\lambda^j)\ge (1-\eta)\alpha \gamma_{\lambda,\min}
\end{equation*}
for all $j \in [p]$ and all $\lambda \in \Lambda$. In particular, 
the empirical Gram matrices $G_\lambda^j$ are non-singular, and hence the tensor signal $\Xcal$ is uniquely recoverable from the spatio-temporal samples $\Psi_\Xcal$.

Moreover, it is sufficient that
\begin{equation}\label{eq:suff}
    \alpha \ge \max_{\lambda \in \Lambda}
    \frac{3 \mu_\lambda d_\lambda}{\eta^2 mn \gamma_{\lambda,\min}}
    \log \rb{ \frac{d_\lambda p |\Lambda|}{\varepsilon} }.
\end{equation}
\end{theorem}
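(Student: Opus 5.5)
The plan is to apply the lower-tail matrix Chernoff inequality (Tropp's version) to each fixed pair $(j,\lambda)$, and then take a union bound over all $p|\Lambda|$ pairs. Fix $j\in[p]$ and $\lambda\in\Lambda$, and write $G_\lambda^j=\sum_{i=1}^m\sum_{k=1}^n X_{i,k}$ with $X_{i,k}=n\,\delta_{i,j,k}{\bf v}_\lambda^{(i,k)}({\bf v}_\lambda^{(i,k)})^*$. These summands are independent positive semidefinite $d_\lambda\times d_\lambda$ matrices, and each satisfies $\lambda_{\max}(X_{i,k})\le n\|{\bf v}_\lambda^{(i,k)}\|_2^2\le R_\lambda$ almost surely. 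By Lemma~\ref{lem:diag} and $\mathbb E[G_\lambda^j]=\alpha\Sigma_\lambda$, the mean has smallest eigenvalue $\mu_{\min}:=\alpha\gamma_{\lambda,\min}>0$. The matrix Chernoff bound then gives
\begin{equation*}
\mathbb P\bigl\{\lambda_{\min}(G_\lambda^j)\le(1-\eta)\mu_{\min}\bigr\}\le d_\lambda\left[\frac{e^{-\eta}}{(1-\eta)^{1-\eta}}\right]^{\mu_{\min}/R_\lambda}\le d_\lambda\exp\!\left(-\frac{\eta^2\alpha\gamma_{\lambda,\min}}{2R_\lambda}\right),
\end{equation*}
where the last step uses the elementary inequality $-\eta-(1-\eta)\log(1-\eta)\le-\eta^2/2$ for $\eta\in(0,1)$. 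The statement uses the constant $3$ rather than $2$; this is weaker, so the bound $\exp(-\eta^2\alpha\gamma_{\lambda,\min}/(3R_\lambda))$ is also valid and matches \eqref{eq:alpha_uniform}.

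Next, assume $\alpha$ satisfies \eqref{eq:alpha_uniform}. Then for every $\lambda$ we have $\eta^2\alpha\gamma_{\lambda,\min}/(3R_\lambda)\ge\log(d_\lambda p|\Lambda|/\varepsilon)$. Hence the failure probability for the pair $(j,\lambda)$ is at most $d_\lambda\cdot\varepsilon/(d_\lambda p|\Lambda|)=\varepsilon/(p|\Lambda|)$. A union bound over the $p|\Lambda|$ pairs gives total failure probability at most $\varepsilon$. On the complementary event, $\lambda_{\min}(G_\lambda^j)\ge(1-\eta)\alpha\gamma_{\lambda,\min}>0$ for all $j$ and $\lambda$, so every $G_\lambda^j$ is positive definite.

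To conclude recoverability, recall from the discussion preceding the theorem that positive definiteness of $G_\lambda^j$ is equivalent to the family $\{{\bf v}_\lambda^{(i,k)}:(i,k)\in\Omega_j\}$ spanning $\C^{d_\lambda}$. Under the coordinate identification by the bases $U_{\lambda,s}$, this is the frame condition of Theorem~\ref{thm:det} for $\{V_\lambda^{(i,k)}\}$. I expect this identification to be the only step needing care. The vectors ${\bf v}_\lambda^{(i,k)}$ carry $U_{\lambda,s}^*e_i$, while $V_\lambda^{(i,k)}$ carries $P_{\lambda,s}U_s^*e_i$, and the two differ by a fixed invertible (blockwise coordinate) map that does not depend on $(i,k)$. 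The spanning property is preserved under such a map, and the scalar factor $1/n$ does not affect it. Since $T\ge|\Lambda|$ is assumed, Theorem~\ref{thm:det} then gives unique recovery of $\Xcal$.

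Finally, the sufficient condition \eqref{eq:suff} follows by substituting the incoherence bound \eqref{incoherence1}, $R_\lambda\le\mu_\lambda d_\lambda/(mn)$, into \eqref{eq:alpha_uniform}. Any $\alpha$ satisfying \eqref{eq:suff} therefore also satisfies \eqref{eq:alpha_uniform}.
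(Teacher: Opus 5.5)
Your proposal is correct and follows essentially the same route as the paper: you apply the matrix Chernoff bound to $G_\lambda^j=\sum_{i,k}X_{i,k}$ with $d=d_\lambda$, $R=R_\lambda$, $\mu_{\min}=\alpha\gamma_{\lambda,\min}$, take a union bound over the $p|\Lambda|$ pairs, and obtain \eqref{eq:suff} by substituting $R_\lambda\le\mu_\lambda d_\lambda/(mn)$. Your refinements (the sharper $\eta^2/2$ exponent, and the explicit isometric identification of $n{\bf v}_\lambda^{(i,k)}$ with $V_\lambda^{(i,k)}$ used to invoke Theorem~\ref{thm:det}) only make explicit what the paper leaves implicit.
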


Theorem~\ref{thm:union} converts the deterministic frame condition in Theorem~\ref{thm:det} into an explicit sampling-rate requirement for random Bernoulli sampling. The bound has three main components. The quantity $d_\lambda$ is the dimension of the aggregated spectral block that must be captured, the incoherence parameter $\mu_\lambda$ measures how evenly that block is distributed across the spatial coordinates, and $\gamma_{\lambda,\min}$ measures the conditioning of the corresponding population sensing geometry. 
Thus the sufficient sampling rate deteriorates when the relevant aggregated blocks are large, poorly spread, or poorly conditioned. A brief discussion of the dependence of $\gamma_{\lambda,\min}$ on the choice of slice-wise eigenbases, and of the role of column normalization across slices, is given in Appendix~\ref{app:scaling}.

For the random sampling strategy to be practically meaningful, the sufficient lower bound on $\alpha$ should remain below one, ideally with $\alpha\ll1$. According to \eqref{eq:suff},
this is achievable when the sensing vectors are sufficiently spread across spatial coordinates, so that $\mu_\lambda$ remains small, ideally of order $\mathcal{O}(1)$. This motivates the following standard incoherence assumption, which is closely related to analogous hypotheses in compressed sensing and low-rank matrix recovery; see, for example, \cite{CandesRecht2009}.

\begin{assumption}[Eigenspace Incoherence]
For a fixed constant $\mu_0 \ge 1$ independent of the ambient dimensions $m$ and $n$, the normalized maximum sensing energy is bounded by $\mu_0$
\begin{equation*}
    \mu_{\lambda,s} = \frac{m}{m_{\lambda,s}}  \max_{i\in [m]} \norm{ U_{\lambda,s}^*e_i }_2^2 \le \mu_0 \quad \forall \lambda\in \Lambda,\ \forall s\in\mathcal I_\lambda.
\end{equation*}
\end{assumption}

The assumption above is abstract but natural: it requires the eigenspaces of the Fourier-domain dynamics to be sufficiently spread across the spatial coordinates. The following example provides a canonical benchmark in which this spreading is maximal. It also shows that, in such an ideal incoherent regime, the sufficient sampling rate in Theorem~\ref{thm:union} is governed by the largest aggregated eigenspace dimension $d_\lambda$, rather than by the full ambient dimension $mn$, and is therefore sharp up to logarithmic factors.

\begin{example}[DFT eigenbasis]
Consider the  case in which every Fourier-domain frontal slice is
diagonalized by the same unitary DFT matrix $F_m\in\C^{m\times m}$:
\begin{equation*}
    [\widehat{\Acal}]_{:,:,s}=F_mD_sF_m^*,
\qquad s\in[n],
\end{equation*}
where
\begin{equation*}
[F_m]_{a,b} = \frac{1}{\sqrt m}\exp\rb{-\frac{2\pi i}{m}(a-1)(b-1)},
\qquad a,b\in[m].
\end{equation*}
Thus $U_s=F_m$ for every slice $s$. For each eigenvalue
$\lambda\in\Lambda_s$, the matrix $U_{\lambda,s}$ is obtained by selecting
from $F_m$ the columns corresponding to the $\lambda$-eigenspace in slice $s$.  This model represents a maximally incoherent spatial geometry. Indeed, since
every entry of $F_m$ has modulus $m^{-1/2}$, for every coordinate $i\in[m]$,
\begin{equation*}
    \|U_{\lambda,s}^*e_i\|_2^2 = \frac{m_{\lambda,s}}{m}.
\end{equation*}
Consequently,
\begin{equation*}
    \mu_{\lambda,s} = \frac{m}{m_{\lambda,s}}
\max_{i\in[m]}\|U_{\lambda,s}^*e_i\|_2^2 =1, \qquad \mu_\lambda=1.
\end{equation*}
Thus the incoherence parameter takes its smallest possible value.

The population Gram matrix is also optimally conditioned. By Lemma~\ref{lem:diag}
and the orthonormality of the columns of $F_m$,
\begin{equation*}
    \Sigma_\lambda =
\mathrm{blkdiag}\big( I_{m_{\lambda,s_1}},
\ldots, I_{m_{\lambda,s_r}} \big) =
I_{d_\lambda}, \qquad \gamma_{\lambda,\min}=1,
\end{equation*}
where $\mathcal I_\lambda=\{s_1,\ldots,s_r\}$.

Finally, the maximal single-sample energy is
\begin{equation*}
    R_\lambda =
\frac{1}{n} \max_{i\in[m]} \sum_{s\in\mathcal I_\lambda} \|U_{\lambda,s}^*e_i\|_2^2 = \frac{1}{n} \sum_{s\in\mathcal I_\lambda} \frac{m_{\lambda,s}}{m}
= \frac{d_\lambda}{mn}.
\end{equation*}
Therefore, the uniform recovery guarantee in Theorem~\ref{thm:union} becomes
\begin{equation*}
    \alpha \ge \frac{3}{\eta^2 mn} \max_{\lambda\in\Lambda} d_\lambda \log\!\rb{\frac{d_\lambda p|\Lambda|}{\varepsilon}}.
\end{equation*}

Thus, in this ideal incoherent regime, the sufficient sampling rate in Theorem~\ref{thm:union} scales like
\begin{equation*}
\alpha \asymp \frac{1}{mn}\max_{\lambda\in\Lambda} d_\lambda
\end{equation*}
up to logarithmic factors. In particular, the recovery threshold is governed by the largest aggregated spectral block, rather than by the full ambient dimension $mn$.

This scaling is sharp up to logarithmic factors. Indeed, by the deterministic necessary condition in the decoupled formulation, one must have
\begin{equation*}
|\Omega_j| \ge \max_{\lambda\in\Lambda} d_\lambda
\end{equation*}
for every $j\in[p]$. Under Bernoulli sampling with rate $\alpha$, the expected number of samples in one lateral slice is $\alpha mn$, so any recovery guarantee must require, at least at the level of scaling,
\begin{equation*}
\alpha \gtrsim \frac{1}{mn}\max_{\lambda\in\Lambda} d_\lambda.
\end{equation*}
Hence Theorem~\ref{thm:union} matches the deterministic lower bound up to the logarithmic factor in this canonical Fourier-diagonalizable model.
\end{example}

For comparison, randomized dynamical sampling bounds such as \cite{huang2024robust} are also coherence-based. In the graph diffusion setting, the sampling complexity is governed by a spectral graph weighted coherence together with the intrinsic bandlimit of the signal. Thus the present result should not be viewed as a departure from the general coherence-based philosophy of randomized dynamical sampling.

The difference is where the random embedding is applied. A direct vectorized treatment of the tensor system would lead to a global coherence condition for the eigenspaces of $\textrm{bcirc}\rb{\Acal}$, or equivalently for a lifted space--time sampling matrix. Such a formulation is valid, but it does not separate the temporal Vandermonde mechanism from the spatial sensing geometry inside each shared-eigenvalue block. In contrast, Theorem~\ref{thm:union} first uses the temporal decoupling to separate the distinct eigenvalues in $\Lambda$, and then applies concentration to the empirical Gram matrices on the aggregated blocks
\begin{equation*}
\bigoplus_{s\in\mathcal I_\lambda}\ran\rb{P_{\lambda,s}}.
\end{equation*}
This is why the sufficient sampling rate is governed by $\max_{\lambda\in\Lambda}d_\lambda$ rather than by an ambient-dimensional or global lifted surrogate.

Consequently, in the canonical incoherent Fourier-diagonalizable model above, Theorem~\ref{thm:union} gives
\begin{equation*}
\alpha mn \asymp \max_{\lambda\in\Lambda}d_\lambda
\end{equation*}
up to logarithmic factors, matching the deterministic blockwise lower bound $|\Omega_j|\ge \max_{\lambda\in\Lambda}d_\lambda$ at the level of scaling. The phase transition experiments in Subsection~\ref{sec:synthetic}, especially Figure~\ref{fig:phase_transition_three}, provide numerical evidence that this scaling is sharp up to logarithmic factors in the incoherent regime. The empirical recovery boundary is closely tracked by the theoretical curve derived from Theorem~\ref{thm:union}, suggesting that the dominant bottleneck is the largest aggregated spectral block rather than a generic ambient-dimensional count.

\section{Recovery stability and temporal conditioning}\label{sec:stability}
For the stability discussion, we work with the coordinate realizations ${\bf x}_\lambda^j$ and ${\bf v}_\lambda^{(i,k)}$ of the aggregated eigenspace component \eqref{sensing3} and the sensing vectors \eqref{sensing2}, respectively.

Throughout this subsection, write $R:=|\Lambda|$.
For noisy temporal data $\widetilde{\bf y}^{(i,k)}$, define
\begin{equation*}
\widetilde{\bf g}^{(i,k)} := \underset{{\bf g}\in\C^R}{\textrm{arg min}}
\norm{\widetilde{\bf y}^{(i,k)}-\mathsf V_T(\Lambda){\bf g}}_2 = \mathsf{V}_T(\Lambda)^\dagger\widetilde{\bf y}^{(i,k)}.
\end{equation*}
For a fixed $\lambda\in\Lambda$, the least-squares reconstruction $\widehat{\bf x}_\lambda^j$ below is obtained by solving
\begin{equation*}
\widetilde{\bf x}_\lambda^j \in \underset{{\bf x}\in\C^{d_\lambda}}{\textrm{arg min}} \sum_{(i,k)\in\Omega_j} \vb{ \widetilde g_\lambda(i,k)-\ab{{\bf v}_\lambda^{(i,k)},{\bf x}} }^2,
\end{equation*}
where $\widetilde g_\lambda(i,k)$ denotes the $\lambda$-component of $\widetilde{\bf g}^{(i,k)}$, with the ordering fixed in \eqref{yg-system}.

\begin{proposition}\label{prop:temp-stability}
Fix $j\in[p]$ and $\lambda\in\Lambda$, and assume that $T\ge |\Lambda|$. Suppose that the family
\begin{equation*}
\big\{ {\bf v}_\lambda^{(i,k)} : (i,k)\in\Omega_j \big\}
\end{equation*}
has lower frame bound $C_\lambda^j > 0$. 

Let the temporal data be corrupted by additive noise:
\begin{equation*}
\widetilde{\bf y}^{(i,k)} = {\bf y}^{(i,k)} + {\boldsymbol\eta}^{(i,k)}, \qquad (i,k)\in\Omega_j.
\end{equation*}
Then least-squares reconstruction $\tilde{\bf x}_\lambda^j$ obtained from the noisy temporal data satisfies
\begin{equation*}
    \norm{ \widetilde{\bf x}_\lambda^j - {\bf x}_\lambda^j }_2 \le (C_\lambda^j)^{-1/2} 
    \frac{1}{\sigma_{\min}\rb{ \mathsf{V}_T(\Lambda) }}  \rb{ \sum_{(i,k)\in\Omega_j} \norm{ {\boldsymbol\eta}^{(i,k)} }_2^2 }^{1/2}.
\end{equation*}
\end{proposition}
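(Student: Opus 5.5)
The proof is a two-stage error propagation: first through the Vandermonde pseudoinverse, then through the blockwise least-squares problem.

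\emph{Stage 1 (temporal separation).} Since $T\ge|\Lambda|$ and the eigenvalues in $\Lambda$ are distinct, $\mathsf V_T(\Lambda)$ has full column rank. Hence $\mathsf V_T(\Lambda)^\dagger\mathsf V_T(\Lambda)=I_R$ and $\|\mathsf V_T(\Lambda)^\dagger\|_2=1/\sigma_{\min}(\mathsf V_T(\Lambda))$. By \eqref{eq:temp-vand-threshold}, ${\bf y}^{(i,k)}=\mathsf V_T(\Lambda){\bf g}^{(i,k)}$, so
\begin{equation*}
\widetilde{\bf g}^{(i,k)}-{\bf g}^{(i,k)}=\mathsf V_T(\Lambda)^\dagger{\boldsymbol\eta}^{(i,k)}.
\end{equation*}
Taking only the $\lambda$-coordinate, we write $e_\lambda(i,k):=\widetilde g_\lambda(i,k)-g_\lambda(i,k)$. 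Then
\begin{equation*}
|e_\lambda(i,k)|\le\|\widetilde{\bf g}^{(i,k)}-{\bf g}^{(i,k)}\|_2\le \sigma_{\min}(\mathsf V_T(\Lambda))^{-1}\|{\boldsymbol\eta}^{(i,k)}\|_2 .
\end{equation*}
Summing the squares over $(i,k)\in\Omega_j$ gives $\|{\bf e}\|_2\le\sigma_{\min}^{-1}(\sum\|{\boldsymbol\eta}^{(i,k)}\|_2^2)^{1/2}$, where ${\bf e}=(e_\lambda(i,k))_{(i,k)\in\Omega_j}$.

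\emph{Stage 2 (spatial least squares).} Define the analysis operator $S:\C^{d_\lambda}\to\C^{|\Omega_j|}$ by $(S{\bf x})_{(i,k)}=\langle{\bf v}_\lambda^{(i,k)},{\bf x}\rangle$. By the coefficient formula following \eqref{sensing2}, the noiseless data satisfy $g_\lambda(i,k)=\langle{\bf v}_\lambda^{(i,k)},{\bf x}_\lambda^j\rangle$, that is, ${\bf g}_\lambda=S{\bf x}_\lambda^j$. The lower frame bound gives $S^*S\succeq C_\lambda^j I$. Consequently $S$ is injective, the minimizer is unique, and
\begin{equation*}
\widetilde{\bf x}_\lambda^j=(S^*S)^{-1}S^*\widetilde{\bf g}_\lambda .
\end{equation*}
Subtracting the exact relation yields $\widetilde{\bf x}_\lambda^j-{\bf x}_\lambda^j=S^\dagger{\bf e}$. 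Since $\sigma_{\min}(S)\ge (C_\lambda^j)^{1/2}$, we have $\|S^\dagger\|_2=1/\sigma_{\min}(S)\le (C_\lambda^j)^{-1/2}$. Combining this with Stage 1 gives the claimed bound.

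The only point needing care is the convention for the frame bound. The estimate relies on the lower frame bound being stated for the vectors ${\bf v}_\lambda^{(i,k)}$ themselves, that is, $\sum_{(i,k)\in\Omega_j}|\langle{\bf v}_\lambda^{(i,k)},{\bf x}\rangle|^2\ge C_\lambda^j\|{\bf x}\|_2^2$. It does not use the factor $n$ that appears in $G_\lambda^j$. If the frame bound were instead taken relative to $G_\lambda^j$, the constant would change by $\sqrt n$, so I would fix this convention explicitly at the start. I would also remark that discarding the other coordinates of $\widetilde{\bf g}^{(i,k)}-{\bf g}^{(i,k)}$ in Stage 1 is harmless, since it only loosens the bound.
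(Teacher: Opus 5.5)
Your proposal is correct and follows the paper's proof essentially step for step. Both arguments apply the Vandermonde pseudoinverse bound $\|\mathsf V_T(\Lambda)^\dagger\|_2=1/\sigma_{\min}(\mathsf V_T(\Lambda))$ at each sensor, use the coordinatewise inequality $|\widetilde g_\lambda(i,k)-g_\lambda(i,k)|\le\|\widetilde{\bf g}^{(i,k)}-{\bf g}^{(i,k)}\|_2$, and invoke the left-inverse norm $(C_\lambda^j)^{-1/2}$ of the analysis operator. Your explicit identity $\widetilde{\bf x}_\lambda^j-{\bf x}_\lambda^j=S^\dagger{\bf e}$ and your remark that $C_\lambda^j$ refers to the unweighted family (not to $G_\lambda^j$, which carries the extra factor $n$) only spell out what the paper leaves implicit.
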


Proposition~\ref{prop:temp-stability} gives a two-factor stability estimate. The spatial factor is determined by the lower frame bound $C_\lambda^j$, while the temporal factor is determined by the smallest singular value of $\mathsf{V}_T(\Lambda)$. Thus exact recovery in Theorem~\ref{thm:det} does not by itself guarantee stable recovery. The Vandermonde step may still amplify noise even after the temporal injectivity threshold has been reached.

The role of the Vandermonde factor can be compared with the space-time lifting viewpoint in dynamical sampling \cite{aldroubi2021}. More specifically, in randomized graph dynamical sampling, the temporal profile $\rb{1,\lambda,\ldots,\lambda^{T-1}}$ is normalized by its $\ell^2$ norm before applying a coherence estimate \cite{huang2024robust}.
 In the present notation this norm is
\begin{equation}\label{eq:phiT-def}
\phi_T(\rho):=\rb{\sum_{t=0}^{T-1}\rho^{2t}}^{1/2}=
\begin{cases}
\rb{\dfrac{1-\rho^{2T}}{1-\rho^2}}^{1/2}, & \rho\ne 1,\\
\sqrt{T}, & \rho=1.
\end{cases}
\end{equation}
In a lifted-space sampling analysis, such a factor is used to normalize the temporal columns before applying a coherence estimate. Here the situation is different. We must invert the unnormalized Vandermonde matrix $\mathsf V_T(\Lambda)$ in order to separate the coefficients associated with distinct eigenvalues. 
Hence unequal temporal column norms become a source of noise amplification rather than merely a normalization factor.

To make this obstruction explicit, write
\begin{equation*}
D_T:=\textrm{diag}\rb{\phi_T(|\lambda_1|),\ldots,\phi_T(|\lambda_R|)},\qquad \widetilde{\mathsf V}_T:=\mathsf V_T(\Lambda)D_T^{-1}.
\end{equation*}
Then $\widetilde{\mathsf V}_T$ has unit-norm columns and
\begin{equation*}
\mathsf V_T(\Lambda)=\widetilde{\mathsf V}_T D_T.
\end{equation*}
Thus the conditioning of $\mathsf V_T(\Lambda)$ contains at least two effects: the angular conditioning of the normalized temporal profiles, encoded by $\widetilde{\mathsf V}_T$, and the imbalance of the column norms, encoded by $D_T$. 
The estimate below isolates the second effect. It does not characterize the full conditioning of the Vandermonde matrix. Instead, it shows that, outside the stable-spectrum regime described next, imbalance in the magnitudes of the eigenvalues alone can force temporal instability as $T$ grows.

Write
\begin{equation}\label{eq:rho-pm}
\rho_+:=\max_{\lambda\in\Lambda}\vb{\lambda},\qquad \rho_-:=\min_{\lambda\in\Lambda}\vb{\lambda}.
\end{equation}
In the stable-spectrum regime, temporal oversampling does not by itself create this Vandermonde blow-up. Indeed, if
\begin{equation*}
\rho_+ := \max_{\lambda\in\Lambda}\vb{\lambda}<1,
\end{equation*}
then the Vandermonde Gram matrices satisfy
\begin{equation*}
\mathsf V_T(\Lambda)^*\mathsf V_T(\Lambda)=\rb{\sum_{t=0}^{T-1}\overline{\lambda_a}^{\,t}\lambda_b^t}_{a,b=1}^{|\Lambda|}
\end{equation*}
and converge to
\begin{equation*}
\rb{\frac{1}{1-\overline{\lambda_a}\lambda_b}}_{a,b=1}^{|\Lambda|}.
\end{equation*}
Moreover, the difference between two consecutive Gram matrices is positive semidefinite, since it is the rank-one contribution from the new time row. Since the eigenvalues are distinct, the first $|\Lambda|$ rows already form a nonsingular square Vandermonde matrix. Hence $\kappa_2\rb{\mathsf V_T(\Lambda)}$ is uniformly bounded for $T\ge |\Lambda|$. Thus, when $\rho_+<1$, increasing $T$ does not cause the temporal-conditioning deterioration considered below. Ill-conditioning may still occur if the eigenvalues are nearly colliding, but it is not caused by temporal oversampling.

\begin{theorem}\label{thm:temp-conditioning}
Assume that $\rho_-<\rho_+$. For $T\ge |\Lambda|$, define
\begin{equation*}
    \Phi_T := \frac{\phi_T(\rho_-)}{\phi_T(\rho_+)}.
\end{equation*}
Then the following hold:
\begin{enumerate}[label={\rm(\roman*)}]
    \item For every $T\ge |\Lambda|$,
    \begin{equation*}
        \frac{1}{\kappa_2\rb{ \mathsf{V}_T(\Lambda) }} \le \Phi_T.
    \end{equation*}
    
    \item The sequence $\cb{ \Phi_T }_{T\ge R}$ is strictly decreasing in $T$.

    \item If $\rho_+ \ge 1$, then
    \begin{equation*}
        \Phi_T \to 0 \qquad\text{as } T\to\infty,
    \end{equation*}
\end{enumerate}
where $\kappa_2(A):=\sigma_{\max}(A)/\sigma_{\min}(A)$ denotes the standard $2$-norm condition number of $A$.
\end{theorem}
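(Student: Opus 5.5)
For part (i) I will compare two columns of $\mathsf V_T(\Lambda)$. Pick $\lambda_+\in\Lambda$ with $|\lambda_+|=\rho_+$ and $\lambda_-\in\Lambda$ with $|\lambda_-|=\rho_-$. These are distinct because $\rho_-<\rho_+$. The column of $\mathsf V_T(\Lambda)$ indexed by $\lambda$ is $(1,\lambda,\ldots,\lambda^{T-1})^\top$, and its Euclidean norm is exactly $\phi_T(|\lambda|)$ from \eqref{eq:phiT-def}. For any matrix $A$ and standard basis vector $e_a$ we have $\sigma_{\min}(A)\le\|Ae_a\|_2\le\sigma_{\max}(A)$. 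Applying this with $a$ the index of $\lambda_-$ and of $\lambda_+$ gives
\begin{equation*}
\sigma_{\min}\rb{\mathsf V_T(\Lambda)}\le\phi_T(\rho_-),\qquad \sigma_{\max}\rb{\mathsf V_T(\Lambda)}\ge\phi_T(\rho_+).
\end{equation*}
Since $T\ge|\Lambda|$, the matrix $\mathsf V_T(\Lambda)$ has full column rank, so $\sigma_{\min}>0$. Dividing the two bounds yields $1/\kappa_2(\mathsf V_T(\Lambda))\le\Phi_T$.

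For part (ii) I will work with $\Phi_T^2=S_T(a)/S_T(b)$, where $a=\rho_-^2<b=\rho_+^2$ and $S_T(x)=\sum_{t=0}^{T-1}x^t$. Strict decrease means $S_{T+1}(a)S_T(b)<S_T(a)S_{T+1}(b)$. Using $S_{T+1}(x)=S_T(x)+x^T$, this is equivalent to $a^TS_T(b)<b^TS_T(a)$. Rewriting, the condition becomes
\begin{equation*}
\sum_{t=0}^{T-1}a^Tb^t<\sum_{t=0}^{T-1}b^Ta^t,\quad\text{i.e.}\quad \sum_{t=0}^{T-1}(ab)^t\bigl(a^{T-t}-b^{T-t}\bigr)<0 .
\end{equation*}
This holds term by term, since $a<b$ and $T-t\ge1$. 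The case $\rho_-=0$ needs one check: then $a=0$ and the left side is $0$, while $b^TS_T(0)=b^T>0$, so the inequality is still strict. This algebraic step is the main point needing care, but it is elementary.

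For part (iii), if $\rho_+\ge1$ then $\phi_T(\rho_+)\to\infty$: it equals $\sqrt T$ when $\rho_+=1$ and grows geometrically when $\rho_+>1$. I will split into two cases.

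\emph{Case $\rho_-<1$.} Here $\phi_T(\rho_-)\le(1-\rho_-^2)^{-1/2}$ stays bounded, so $\Phi_T\to0$.

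\emph{Case $1\le\rho_-<\rho_+$.} Here $\rho_+>1$. If $\rho_->1$, the closed form gives
\begin{equation*}
\Phi_T^2=\frac{\rho_-^{2T}-1}{\rho_+^{2T}-1}\cdot\frac{\rho_+^2-1}{\rho_-^2-1}\sim C\,(\rho_-/\rho_+)^{2T}\to0 .
\end{equation*}
If $\rho_-=1$, then $\Phi_T^2=T(\rho_+^2-1)/(\rho_+^{2T}-1)\to0$.

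Together these cases give (iii).
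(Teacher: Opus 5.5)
Your proposal is correct and follows essentially the same route as the paper. It uses the column-norm bounds $\sigma_{\min}\le\phi_T(\rho_-)$ and $\sigma_{\max}\ge\phi_T(\rho_+)$ for (i), the termwise inequality $\rho_-^{2T}\sum_{t=0}^{T-1}\rho_+^{2t}<\rho_+^{2T}\sum_{t=0}^{T-1}\rho_-^{2t}$ for (ii) (which the paper packages as strict monotonicity of $h_T(\rho)=\rho^{2T}/S_T(\rho)$), and a case split of $\rho_\pm$ relative to $1$ for (iii). Your explicit treatment of $\rho_-=0$ is a small improvement, since the paper's write-up argues only for $\rho>0$ in (ii) and its subcases in (iii) likewise state $0<\rho_-<1$.
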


Theorem~\ref{thm:temp-conditioning} shows that temporal oversampling is not automatically beneficial.
When the spectral magnitudes are unbalanced, the columns of $\mathsf V_T(\Lambda)$ become increasingly uneven in norm, and this forces deterioration of the Vandermonde conditioning.

\begin{corollary}\label{coro:explicit}
Assume that $0\le \rho_-<1<\rho_+$.
Let $\tau>0$. Then any temporal depth $T\ge |\Lambda|$ satisfying
\begin{equation}\label{eq:Ttau}
\rho_+^{T-1}\ge \frac{1}{\tau\sqrt{1-\rho_-^2}}
\end{equation}
guarantees
\begin{equation*}
\frac{1}{\kappa_2\rb{\mathsf V_T(\Lambda)}}\le \tau.
\end{equation*}
Moreover, a more transparent sufficient condition for \eqref{eq:Ttau} is
\begin{equation}\label{eq:Ttau-rough}
T\ge \max\cb{ |\Lambda|,\, 1+\frac{\rho_+}{\rho_+-1}
\log\rb{\frac{1}{\tau\sqrt{1-\rho_-^2}}} }.
\end{equation}
In particular, the required temporal depth grows at most logarithmically in $1/\tau$ and becomes larger as $\rho_+$ approaches $1$ from above.
\end{corollary}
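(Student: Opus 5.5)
The plan is to use Theorem~\ref{thm:temp-conditioning}(i), which gives $1/\kappa_2(\mathsf V_T(\Lambda))\le\Phi_T=\phi_T(\rho_-)/\phi_T(\rho_+)$ for every $T\ge|\Lambda|$. It then suffices to show that \eqref{eq:Ttau} forces $\Phi_T\le\tau$. This amounts to bounding the numerator from above and the denominator from below by quantities independent of the fine structure of $\Lambda$.

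\textbf{Numerator.} Since $0\le\rho_-<1$, the definition \eqref{eq:phiT-def} gives
\[
\phi_T(\rho_-)^2=\sum_{t=0}^{T-1}\rho_-^{2t}\le\sum_{t=0}^{\infty}\rho_-^{2t}=\frac{1}{1-\rho_-^2},
\]
so $\phi_T(\rho_-)\le(1-\rho_-^2)^{-1/2}$. The case $\rho_-=0$ is included, with $\phi_T(0)=1$.

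\textbf{Denominator.} Keeping only the last term of the sum gives $\phi_T(\rho_+)\ge\rho_+^{T-1}$. Combining the two bounds,
\[
\Phi_T\le\frac{1}{\rho_+^{T-1}\sqrt{1-\rho_-^2}}.
\]
The right-hand side is at most $\tau$ exactly when \eqref{eq:Ttau} holds, which proves the first claim.

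\textbf{The sufficient condition \eqref{eq:Ttau-rough}.} Take logarithms in \eqref{eq:Ttau}. It is equivalent to
\[
(T-1)\log\rho_+\ge\log\Bigl(\tfrac{1}{\tau\sqrt{1-\rho_-^2}}\Bigr).
\]
If the right side is $\le 0$, the condition holds for every $T\ge1$ because $\rho_+>1$, and the $|\Lambda|$ term in the maximum suffices. Otherwise, use the elementary inequality
\[
\log x\ge 1-\frac1x=\frac{x-1}{x},\qquad x>0,
\]
with $x=\rho_+$, which gives $\log\rho_+\ge(\rho_+-1)/\rho_+$. Hence
\[
T-1\ge\frac{\rho_+}{\rho_+-1}\log\Bigl(\tfrac{1}{\tau\sqrt{1-\rho_-^2}}\Bigr)
\]
implies $(T-1)\log\rho_+\ge\log\bigl(1/(\tau\sqrt{1-\rho_-^2})\bigr)$, which is \eqref{eq:Ttau}.

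\textbf{Concluding remarks.} The required depth is logarithmic in $1/\tau$ and carries the factor $\rho_+/(\rho_+-1)$, which blows up as $\rho_+\downarrow1$. The only step that needs care is the logarithm inequality and the sign cases. Everything else rests on Theorem~\ref{thm:temp-conditioning}(i), which is taken as given.
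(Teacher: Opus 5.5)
Your proposal is correct and follows the paper's proof essentially step for step. Both bound $\phi_T(\rho_-)$ by the full geometric series and $\phi_T(\rho_+)$ by its last term, conclude via Theorem~\ref{thm:temp-conditioning}(i), and obtain \eqref{eq:Ttau-rough} from $\log\rho_+\ge(\rho_+-1)/\rho_+$ after handling the trivial case $1/(\tau\sqrt{1-\rho_-^2})\le 1$ separately.
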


Corollary~\ref{coro:explicit} makes the preceding obstruction explicit in the mixed-modulus case $0\le \rho_-<1<\rho_+$.
In this regime,
\begin{equation*}
\Phi_T \le \frac{1}{\sqrt{1-\rho_-^2}}\,\rho_+^{-(T-1)}.
\end{equation*}
Thus the upper bound for $1/\kappa_2\rb{\mathsf V_T(\Lambda)}$ decays at least geometrically with $T$.
The rough bound \eqref{eq:Ttau-rough} gives the corresponding temporal scale: the depth needed to certify $1/\kappa_2\rb{\mathsf V_T(\Lambda)}\le\tau$ grows logarithmically in $1/\tau$ and becomes larger as $\rho_+$ approaches $1$.

This conditioning mechanism is consistent with the reconstruction viewpoint described in Subsection~\ref{subsec:reconstruction-viewpoints}. The decoupled two-stage formulation is well suited for controlled spectral diagnostics, where the Vandermonde factor can be isolated, whereas the real-data experiments use the finite-horizon stacked Tensor DS system to avoid an explicit and potentially ill-conditioned spectral separation step.

\section{Numerical Experiments}\label{sec:experiment}

We present numerical experiments designed to test the theoretical and computational claims of the paper. The experiments have two complementary purposes. First, synthetic tensor dynamical systems are used to validate the spectral-block randomized sampling threshold in Theorem~\ref{thm:union} and the Vandermonde-conditioning mechanism in Proposition~\ref{prop:temp-stability} and Theorem~\ref{thm:temp-conditioning}. Second, real spatio-temporal datasets are used to evaluate the proposed Tensor DS method against low-rank, tensor-completion, graph-based, and dynamical sampling baselines. Thus the synthetic experiments are designed to validate the theory, while the real-data experiments test whether the same tensor dynamical modeling principles lead to accurate reconstruction in practical settings.

\subsection{Synthetic phase transitions and temporal stability}\label{sec:synthetic}

We first consider synthetic tensor dynamical systems in the diagonalizable setting. These experiments isolate two numerical consequences of the theory. The phase transition experiments test whether the empirical recovery boundary follows the randomized sampling bound in Theorem~\ref{thm:union}. 
The temporal stability experiment tests a different issue, namely the gap between identifiability and numerical conditioning. Theorem~\ref{thm:det} shows that temporal separation is possible once $T\ge |\Lambda|$, while Proposition~\ref{prop:temp-stability} shows that stable recovery also depends on the Vandermonde factor $\sigma_{\min}\rb{\mathsf V_T(\Lambda)}^{-1}$. Thus this subsection tests both the sampling threshold predicted by the randomized theory and the temporal conditioning mechanism developed in the stability analysis.

\medskip \noindent
\textbf{Phase transition experiments.}
To validate the sampling threshold predicted by the theory, we performed phase transition experiments for the random sampling model. 
The experiments are written in the tensor notation of the paper, with signal tensors $\Xcal\in\C^{m\times p\times n}$ and tensor operators $\Acal\in\C^{m\times m\times n}$. Thus $m$ is the first-mode dimension, $p$ is the second-mode dimension (number of lateral slices), and $n$ is the third-mode dimension.
The goal is to examine empirically how the recovery success probability changes with the sampling rate $\alpha$ and with the structural parameters of the problem.

For each grid point in the phase diagrams, we generated $40$ independent trials. A trial was declared successful if the relative reconstruction error was smaller than $10^{-6}$. The color value at each grid point is the empirical success probability, obtained by averaging these $40$ binary success indicators, and therefore lies between $0$ and $1$.

The three phase-transition panels are generated as follows.
\begin{itemize}[leftmargin=1.5em]
    \item \textbf{$m$-sweep.} We vary $m=4,\ldots,30$ and the sampling rate $\alpha$, while fixing $p=4$, $n=5$, and $d_\lambda=10$. We use $|\Lambda|=10$ for $m=4,\ldots,20$ and $|\Lambda|=15$ for $m=21,\ldots,30$.

    \item \textbf{$n$-sweep.} We vary $n=4,\ldots,30$ and the sampling rate $\alpha$, while fixing $p=4$, $m=5$, and $d_\lambda=10$. We use $|\Lambda|=10$ for $n=4,\ldots,20$ and $|\Lambda|=15$ for $n=21,\ldots,30$.

    \item \textbf{$d_\lambda$-sweep.} We fix $m=10$, $p=4$, $n=10$, $|\Lambda|=35$, and vary $d_\lambda=3,\ldots,14$. For each value of $d_\lambda$, the eigenvalue assignment is generated so that one target eigenvalue has aggregated multiplicity $d_\lambda$, while the remaining eigenvalue positions are assigned to the other distinct eigenvalues. This keeps the number of distinct eigenvalues fixed and isolates the effect of the aggregated block size.
\end{itemize}
In all three panels, the temporal depth is chosen as $T=|\Lambda|+3$, so that the temporal injectivity threshold $T\ge|\Lambda|$ is satisfied with mild oversampling.
Here $d_\lambda$ is the aggregated eigenspace dimension defined in \eqref{dim1}, and $|\Lambda|$ is the number of distinct eigenvalues appearing across the Fourier-domain frontal slices.

The eigenvalues used in these phase transition experiments satisfy $\rho_+<1$. Thus, even when $T>|\Lambda|$, the phase diagrams isolate the randomized spatial sampling threshold from the temporal Vandermonde blow-up discussed in Section~\ref{sec:stability}. The complementary regime $1<\rho_+$ is tested separately in the temporal stability experiment below.

\begin{figure*}[ht!]
    \centering
    \includegraphics[width=0.30\textwidth]{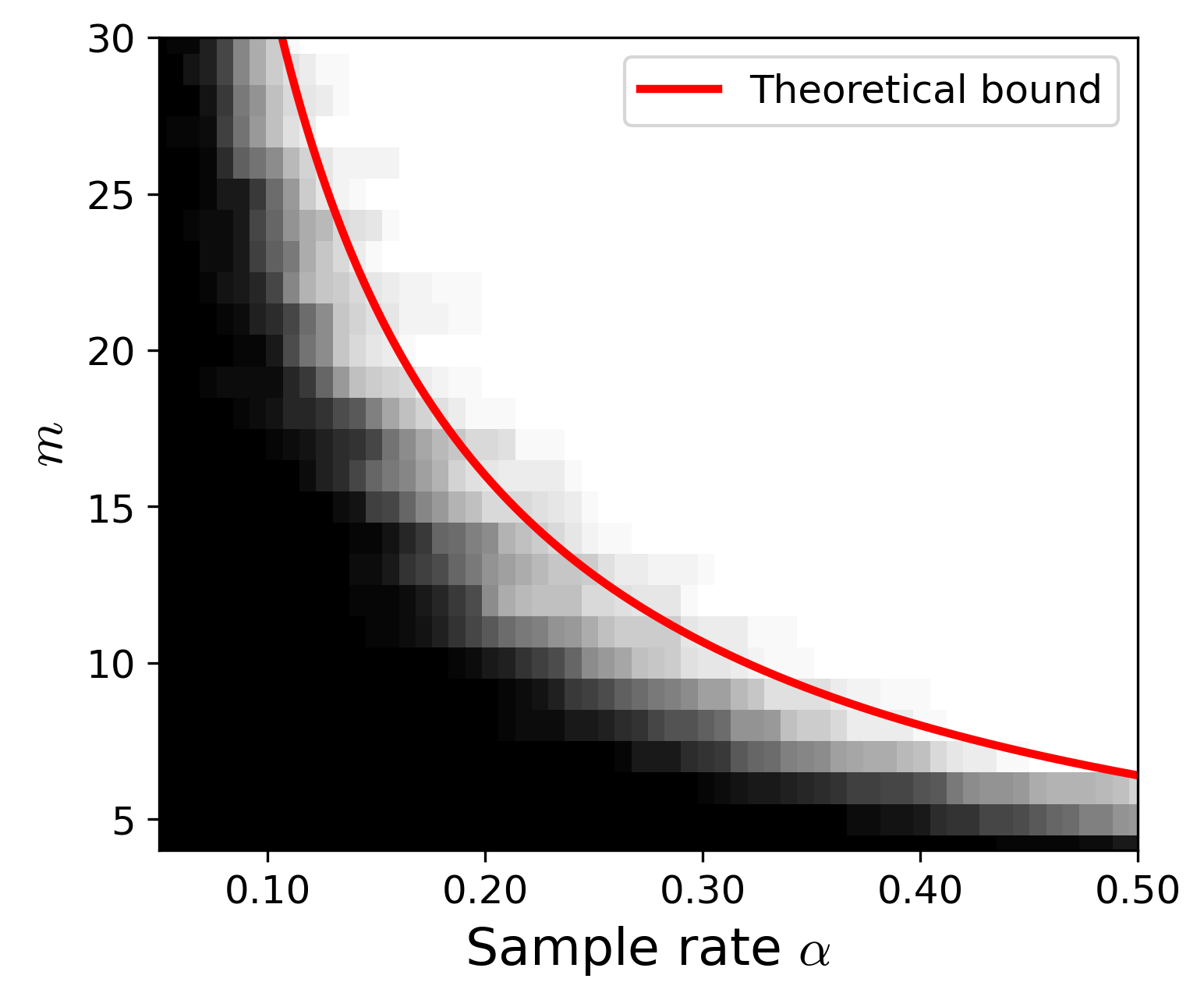}
    \hfill
    \includegraphics[width=0.30\textwidth]{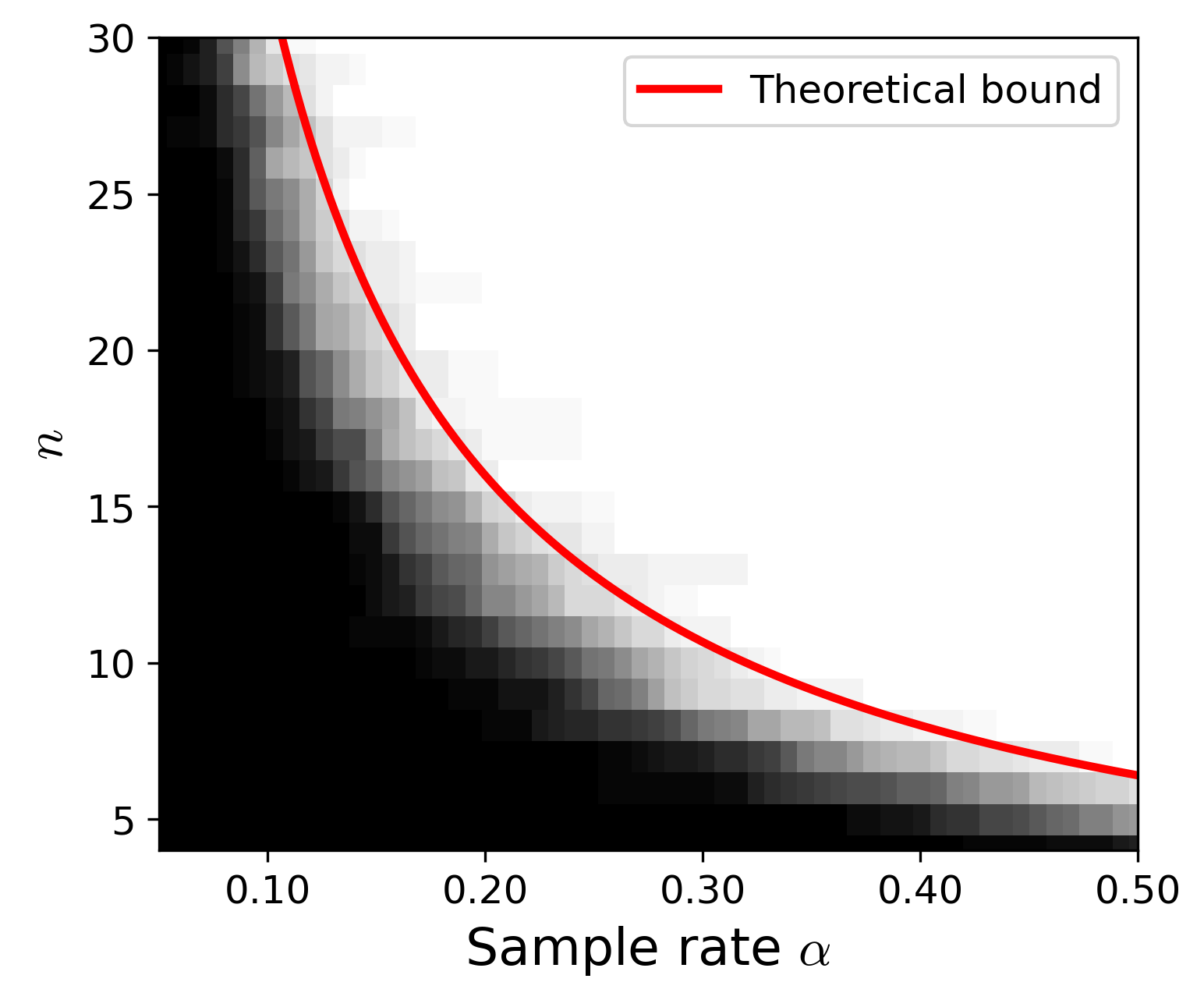}
    \hfill
    \includegraphics[width=0.30\textwidth]{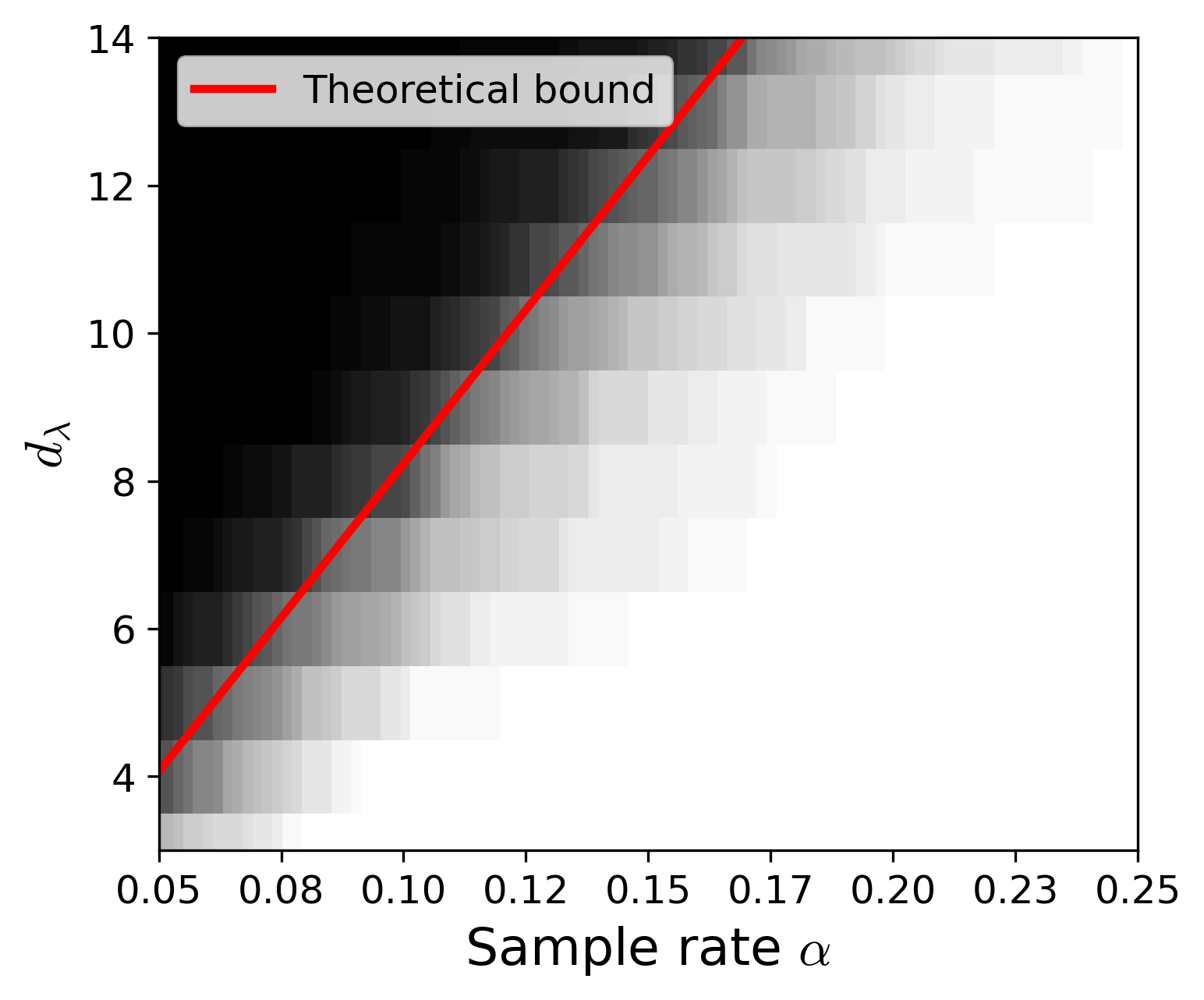}

    \vspace{0.5em}
    \includegraphics[width=0.42\textwidth]{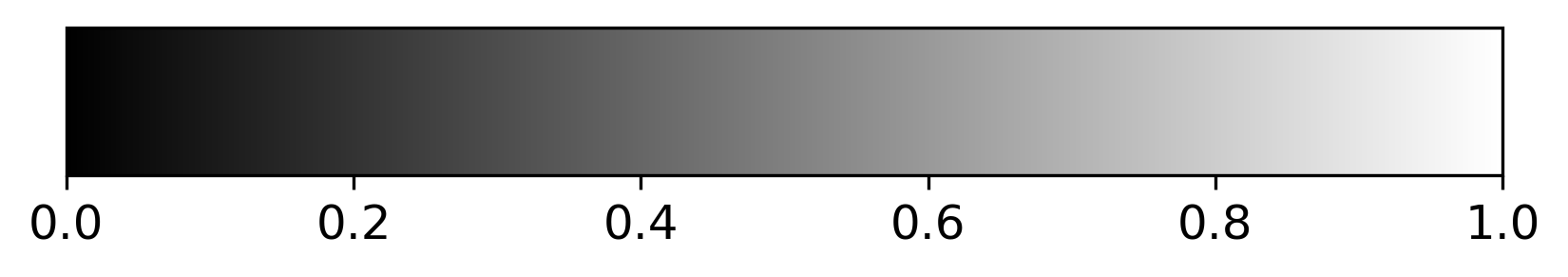}

    \caption{Phase transition diagrams for the synthetic random sampling experiments. Each grid point reports the empirical recovery probability over $40$ independent trials, with success declared when the relative reconstruction error is below $10^{-6}$.
    The left panel varies the first-mode dimension $m$, the middle panel varies the third-mode dimension $n$, and the right panel varies the aggregated spectral block dimension $d_\lambda$. The red curves indicate the theoretical sampling trends predicted by 
Theorem~\ref{thm:union}.
Across all three panels, the empirical phase-transition boundaries closely
follow the predicted trends: the critical sampling rate decreases as $m$
or $n$ increases and increases as $d_{\lambda}$ grows.
This agreement provides numerical evidence consistent with the theoretical
finding that the leading-order recovery threshold scales as
$\alpha_{\mathrm{crit}} \asymp d_{\lambda}/(mn)$.}
    \label{fig:phase_transition_three}
\end{figure*}

Figure~\ref{fig:phase_transition_three} summarizes the empirical
phase-transition behavior with respect to the sampling rate $\alpha$ and the
structural parameters $m$, $n$, and $d_\lambda$. In all three panels, a clear
transition from failure to successful recovery is observed: below the
transition boundary, the empirical success probability remains close to zero,
whereas above it, the success probability rapidly approaches one. More
importantly, the empirical transition boundaries closely follow the theoretical
trends represented by the red curves. In particular, the critical sampling
rate decreases as either $m$ or $n$ increases and increases as the aggregated
eigenspace dimension $d_\lambda$ grows. 
These numerical trends are consistent with the sampling-rate dependence
predicted by Theorem~\ref{thm:union}, namely,
\begin{equation*}
    \alpha_{\mathrm{crit}}
    \asymp
    \frac{d_\lambda}{mn},
\end{equation*}
up to multiplicative constants and logarithmic factors. The agreement between
the empirical phase-transition boundaries and the theoretical curves provides
numerical evidence that the predicted spectral-block scaling captures the
dependence of the recovery threshold on the problem dimensions. In particular,
the experiments support the theoretical finding that the dominant spatial
sampling burden is governed by the aggregated eigenspace dimension
$d_\lambda$, rather than by a generic ambient-dimensional count.

\medskip \noindent
\textbf{Temporal stability experiment.}
We next examine the effect of the temporal depth $T$ on numerical stability. In the decoupled formulation, the temporal Vandermonde matrix $\mathsf V_T(\Lambda)$ first becomes injective when $T\ge |\Lambda|$. This injectivity threshold is necessary for separating the coefficients associated with distinct eigenvalues, but it does not guarantee numerical stability. Proposition~\ref{prop:temp-stability} shows that the reconstruction error contains the temporal factor $\sigma_{\min}\rb{\mathsf V_T(\Lambda)}^{-1}$, while Theorem~\ref{thm:temp-conditioning} shows that imbalance among the eigenvalue magnitudes can force the conditioning of $\mathsf V_T(\Lambda)$ to deteriorate as $T$ grows. 
Similar conditioning effects were also observed numerically in the earlier work \cite{WangCaiHuang2024}.

In the experiment, we use a synthetic diagonalizable t-product system with
\begin{equation*}
(m,p,n)=(10,4,3), \qquad |\Lambda|=5,\qquad \max_{\lambda\in\Lambda}d_\lambda=9.
\end{equation*}
The eigenvalues are chosen in the mixed-modulus regime with
\begin{equation*}
\rho_- =0.45,\qquad \rho_+ =1.95,
\end{equation*}
so that the temporal Vandermonde matrix is identifiable once $T\ge |\Lambda|$ but may become increasingly ill-conditioned as $T$ grows. We fix the sampling rate at $\alpha=0.7$ and use a fixed Bernoulli spatial mask across all time samples. For each value of $T$, the initial tensor is recovered by the decoupled two-stage procedure described in Subsection~\ref{subsec:reconstruction-viewpoints}. 
The reported reconstruction errors are the median relative errors over $20$ repeated noise realizations for each noise level.

We also mark the sufficient temporal scale predicted by Corollary~\ref{coro:explicit}. For the tolerance $\tau=10^{-4}$ used in the experiment, define
\begin{equation*}
T_\tau:=\min\cb{T\ge |\Lambda|:\rho_+^{T-1}\ge \frac{1}{\tau\sqrt{1-\rho_-^2}}}.
\end{equation*}
Then Corollary~\ref{coro:explicit} guarantees that every $T\ge T_\tau$ satisfies
\begin{equation*}
\frac{1}{\kappa_2\rb{\mathsf V_T(\Lambda)}}\le \tau.
\end{equation*}
Thus $T_\tau$ is not the injectivity threshold $T\ge |\Lambda|$, but a certified conditioning scale beyond which the reciprocal condition number is guaranteed to be at most $\tau$.

\begin{figure*}[ht!]
    \centering
    \includegraphics[width=0.49\textwidth]{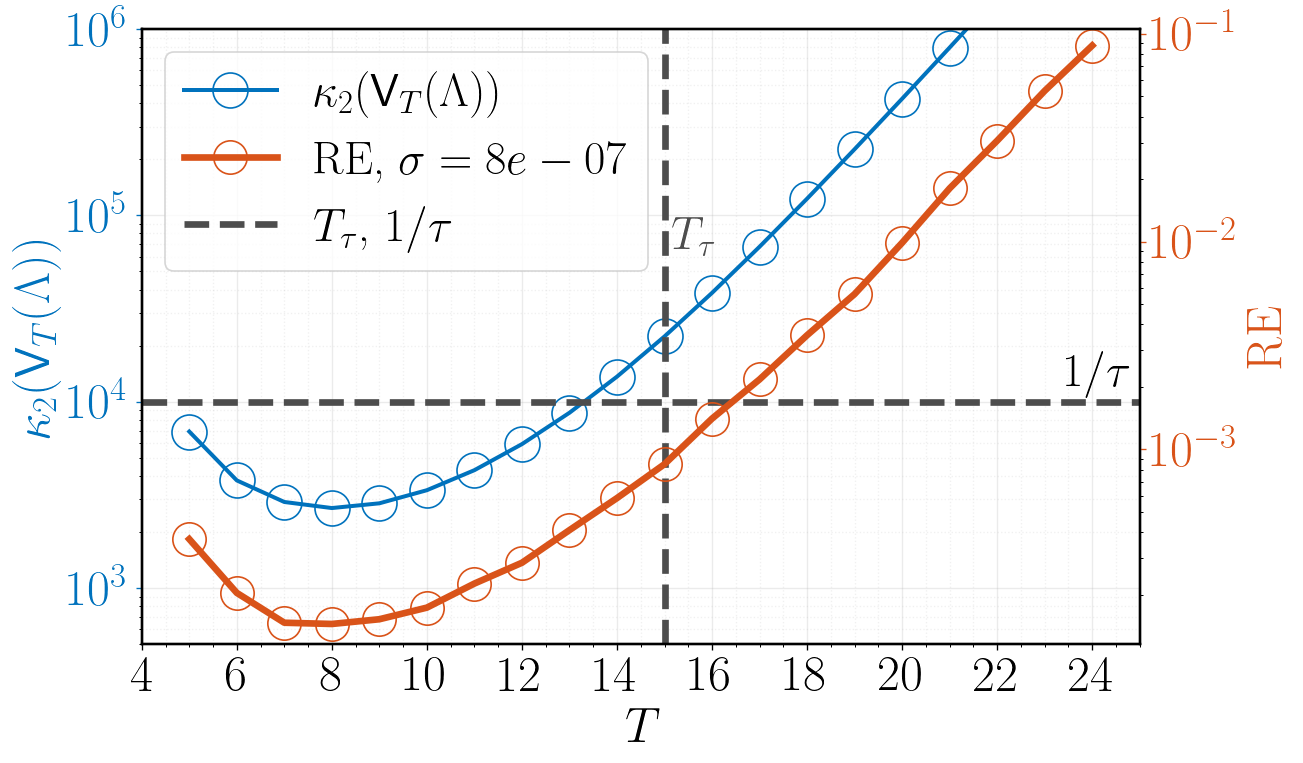}
    \hfill
    \includegraphics[width=0.49\textwidth]{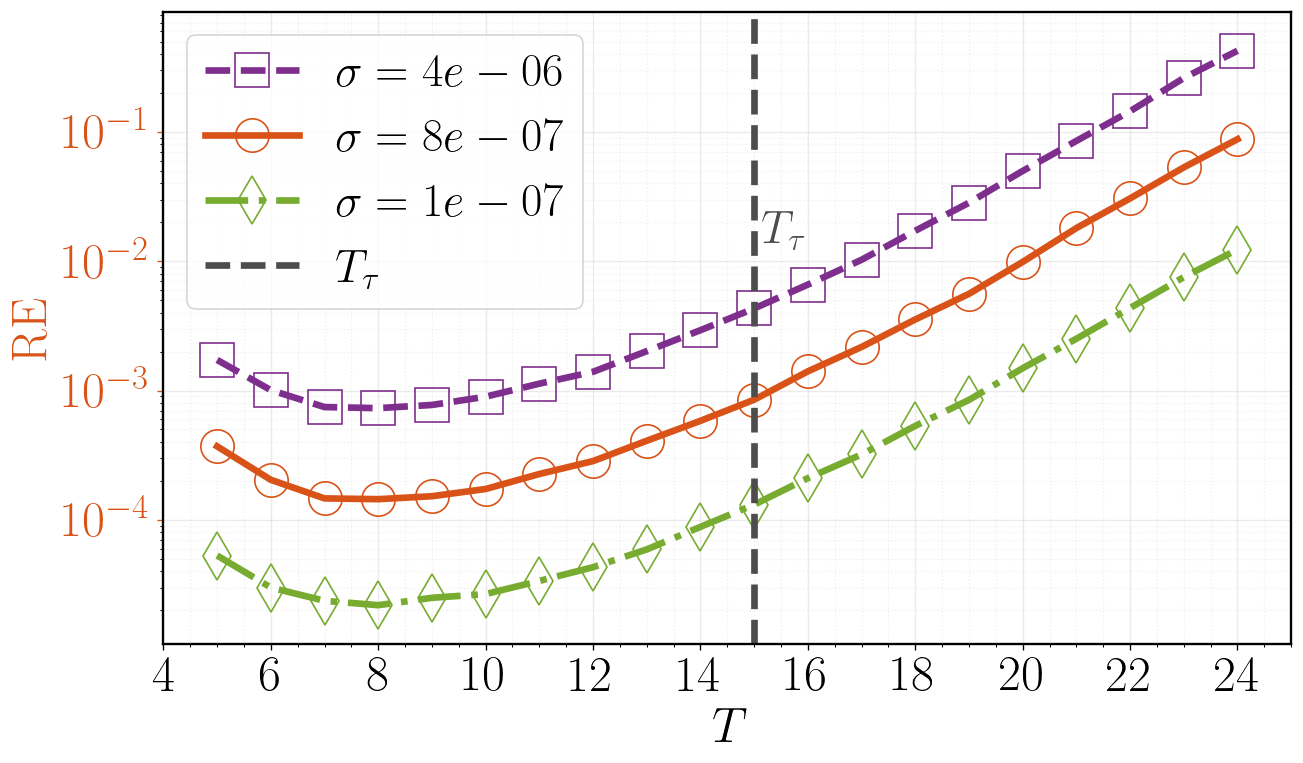}
    \caption{Vandermonde conditioning and two-stage recovery error as the temporal depth $T$ increases. The left panel shows the condition number $\kappa_2\rb{\mathsf V_T(\Lambda)}$ together with the representative relative reconstruction error for $\sigma=8\times10^{-7}$, while the right panel shows the relative reconstruction error of the decoupled two-stage recovery under several scaled additive Gaussian noise levels. The horizontal dashed line marks $1/\tau$, and the vertical dashed line marks the certified conditioning scale $T_\tau$ from Corollary~\ref{coro:explicit}, with $\tau=10^{-4}$. Although $T\ge|\Lambda|$ is sufficient for temporal identifiability, increasing $T$ can worsen numerical conditioning in the mixed-modulus regime $0\le\rho_-<1<\rho_+$, and this deterioration is reflected in the reconstruction error.}
    \label{fig:vandermonde_cond}
\end{figure*}

Figure~\ref{fig:vandermonde_cond} illustrates the two competing effects of increasing the temporal depth $T$. Once the injectivity threshold $T\ge|\Lambda|$ is reached, additional temporal samples can improve the two-stage reconstruction for moderate values of $T$. For larger $T$, however, the unnormalized Vandermonde columns become increasingly imbalanced in norm in the mixed-modulus regime, and the condition number $\kappa_2\rb{\mathsf V_T(\Lambda)}$ grows rapidly. The resulting increase in the reconstruction error is consistent with Proposition~\ref{prop:temp-stability} and Theorem~\ref{thm:temp-conditioning}. Thus temporal oversampling can improve identifiability, but it may degrade stability when the Vandermonde separation step is ill-conditioned.

\subsection{Real-data reconstruction setup}

We next evaluate the proposed Tensor DS method on real spatio-temporal data. Unlike the synthetic phase transition experiments, these experiments do not assume an exactly diagonalizable generative model with known spectral blocks. Instead, they test whether the tensor dynamical structure learned from training data gives stable and accurate reconstruction from partial observations.
This subsection describes the datasets, sampling protocol, baselines, and evaluation metrics used in both real-data experiments. The dataset-specific results are then reported separately in the following two subsections.

\medskip \noindent
\textbf{Datasets and preprocessing.}
We consider two real datasets: a monthly sea surface temperature (SST) dataset from the NOAA Optimum Interpolation (OI) SST V2 product, obtained through the NOAA Physical Sciences Laboratory (PSL) archive \cite{reynolds2002improved}, 
and a velocity dataset extracted from the forced isotropic turbulence dataset of the Johns Hopkins Turbulence Database (JHTDB) \cite{li2008public,perlman2007data}.

For the SST experiment, we crop a spatial subregion of size $60\times 60$ from the original field, using the first monthly record as the starting time and the 1-based array indices $61{:}120$ in latitude and $121{:}180$ in longitude. This yields a sequence of monthly SST snapshots.
The first $60$ months are used for training, and the subsequent six months are used as the dynamical-sampling and evaluation horizon.
The data are normalized to $[0,1]$ by min-max scaling.

For the velocity experiment, each frame is originally a three-dimensional field with three velocity components from the JHTDB forced isotropic turbulence dataset. We first convert the three components into velocity magnitude and then extract a fixed two-dimensional slice to form the spatio-temporal field used in the experiment.

\medskip \noindent
\textbf{Sampling protocol.} 
For both datasets, the sampling rate is varied over
\begin{equation*}
\alpha\in\cb{0.2,0.3,\ldots,0.9},
\end{equation*}
and reconstruction performance is evaluated under multiple independent random masks.

This sampling protocol is consistent with the independent spatial sampling model used in Section~\ref{sec:rand}, although the real datasets need not satisfy the exact spectral assumptions imposed in the synthetic experiments. Thus the real-data experiments should be interpreted as algorithmic validation rather than as direct tests of the exact hypotheses of Theorem~\ref{thm:union}.

\medskip \noindent
\textbf{Compared methods.}
We compare the proposed Tensor DS method with the following five baselines.

\begin{itemize}[leftmargin=1.5em]
\item \textbf{POD-LD.}
POD-LD is a POD/SVD-based low-rank reconstruction baseline with learned latent dynamics. The spatio-temporal data are unfolded into a matrix whose columns are vectorized spatial snapshots. From the fully observed training snapshots, we remove the temporal mean and compute a rank-$r_{\mathrm{mc}}$ POD/SVD basis. A first-order linear transition is then fitted in the latent coordinates. Given fixed-mask observations over the testing horizon, the initial latent state is recovered by ridge-regularized least squares and then propagated by the learned latent transition. This baseline follows the common POD-based strategy of estimating low-dimensional coefficients from incomplete observations \cite{everson1995,willcox2006,peherstorfer2020}, but is adapted here by coupling the POD coefficients through a learned latent linear dynamics.

\item \textbf{Tucker.}
This is a Tucker-type reduced dynamical baseline based on low multilinear-rank factorization \cite{tucker1966,KoldaBader2009}. The spatial factor matrices are learned from the training data, the snapshots are projected onto a low-dimensional Tucker core space, and a linear transition is fitted between consecutive core coefficients. The initial core coefficient is then estimated from the fixed-mask multi-time observations and propagated by the learned reduced core dynamics.

\item \textbf{GR.}
GR is a graph-regularized interpolation baseline for graph-structured data \cite{narang2013}. Each spatial snapshot is viewed as a graph signal on the spatial grid graph, and missing entries are reconstructed by balancing agreement with the sampled locations and smoothness with respect to the graph Laplacian. This baseline exploits spatial graph regularity, but it does not learn a tensor-valued system operator.

\item \textbf{Vector DS.}
Vector DS is used as a fully vectorized homogeneous dynamical sampling baseline, motivated by the graph dynamical sampling framework of \cite{gong2026randomized}. In contrast to the graph-spectral affine formulation in that work, our implementation does not impose graph-bandlimitedness or recover an additional source term. Each snapshot is reshaped into a vector, a full-state linear evolution operator is learned from consecutive training snapshots, and the initial testing state is recovered from the fixed-mask multi-time observations by ridge-regularized least squares before being propagated by the learned vector operator. Thus this baseline isolates the effect of vectorizing the spatial field and learning a high-dimensional vector evolution model, without preserving the tensor geometry.

\item \textbf{SRS.}
SRS is a static random sampling baseline for bandlimited graph signals \cite{puy2018}. Each testing frame is reconstructed independently from sampled graph nodes using a graph-spectral reconstruction with a training-selected spectral cutoff and regularization parameter. Unlike Tensor DS and the dynamical baselines, SRS does not fit a multi-time dynamical observation system for the initial state.
\end{itemize}

These baselines separate the effects of vectorized low-rank modeling, low multilinear-rank tensor modeling, graph smoothness, full vectorized dynamics, and graph-spectral static interpolation. In particular, Vector DS serves as the closest fully vectorized non-tensor dynamical sampling counterpart. 
Tensor DS differs from all of them by learning and exploiting a t-product system operator, thereby preserving the tensor-valued dynamical structure rather than reducing the data to a matrix subspace, a Tucker-core model, or a full vectorized model.

\medskip \noindent
\textbf{Learning the t-product system operator.}
For both real-data experiments, the Tensor DS method first learns a t-product system operator $\mathcal{A}$ for the evolution model \eqref{evol1-intro}. Using the Fourier-domain representation reviewed in Section~\ref{sec:prelim}, we fit each frequency-slice operator from consecutive training snapshots by a truncated DMD-type least-squares estimate \cite{schmid2010dynamic,tu2014dynamic}. The truncation parameter $r_{\mathrm{tdmd}}$ specifies the SVD rank used in this slice-wise fit, and the learned slice operators are transformed back by the inverse DFT before reconstruction.
The resulting operator is then inserted into the finite-horizon stacked Tensor DS system described in Subsection~\ref{subsec:reconstruction-viewpoints}, with the ridge parameter listed in the dataset-specific tables.

\medskip \noindent
\textbf{Evaluation metrics.}
We evaluate reconstruction quality using relative error (RE), signal-to-noise ratio (SNR), mean absolute error (MAE), and structural similarity (SSIM). Let $x_{\mathrm{true}}\in\mathbb{R}^N$ denote the ground-truth signal and $x_{\mathrm{rec}}\in\mathbb{R}^N$ denote the reconstructed signal. The relative error is defined by
\begin{equation*}
    \mathrm{RE} :=
\frac{\|x_{\mathrm{rec}}-x_{\mathrm{true}}\|_2}{\|x_{\mathrm{true}}\|_2},
\end{equation*}
and the reconstruction SNR is defined by
\begin{equation*}
    \mathrm{SNR} := 20\log_{10}
\frac{\|x_{\mathrm{true}}\|_2}{\|x_{\mathrm{rec}}-x_{\mathrm{true}}\|_2},
\end{equation*}
In the numerical implementation, a small value $\varepsilon=10^{-12}$ is added to the denominator only to avoid division by zero.

The mean absolute error is defined as
\begin{equation*}
    \mathrm{MAE} =
\frac{1}{N}\sum_{i=1}^{N}
\left|x_{\mathrm{rec},i}-x_{\mathrm{true},i}\right|.
\end{equation*}

For image-like data, we also report the structural similarity index (SSIM). Given two images $X_{\mathrm{rec}}$ and $X_{\mathrm{true}}$, SSIM is defined by
\begin{equation*}
    \mathrm{SSIM}(X_{\mathrm{rec}},X_{\mathrm{true}})
= \frac{
(2\mu_{\mathrm{rec}}\mu_{\mathrm{true}}+C_1)
(2\sigma_{\mathrm{rec,true}}+C_2)
}{
(\mu_{\mathrm{rec}}^2+\mu_{\mathrm{true}}^2+C_1)
(\sigma_{\mathrm{rec}}^2+\sigma_{\mathrm{true}}^2+C_2) },
\end{equation*}
where $\mu_{\mathrm{rec}}$ and $\mu_{\mathrm{true}}$ are the image means, $\sigma_{\mathrm{rec}}^2$ and $\sigma_{\mathrm{true}}^2$ are the variances, and $\sigma_{\mathrm{rec,true}}$ is the covariance between the reconstructed and true images. The constants $C_1$ and $C_2$ are used to stabilize the ratio.

\subsection{Sea surface temperature reconstruction}
\

\medskip \noindent
\textbf{Experimental setup and ROI interpretation.}
Table~\ref{tab:sst-hyperparameters} summarizes the experimental settings used in the SST experiments.

\begin{table}[ht!]
\centering
\caption{Experimental settings for the SST experiments.}
\label{tab:sst-hyperparameters}
\begin{tabular}{lll}
\toprule
Category & Setting & Value \\
\midrule
\midrule
\multirow{5}{*}{Dataset} 
& Spatial crop indices & lat $61{:}120$, lon $121{:}180$ \\
& Spatial crop size & $60 \times 60$ \\
& Training months & $1\text{-}60$ \\
& Dynamical-sampling / evaluation months & $61\text{-}66$ \\
& Normalization & min--max scaling to $[0,1]$ \\
\midrule
\multirow{4}{*}{Sampling} 
& Sampling rates & $0.2,0.3,\dots,0.9$ \\
& Number of trials & $10$ \\
& Sampling mask & fixed across the observed months \\
& Noise level $\sigma$ & $0$ \\
\midrule
\multirow{3}{*}{Tensor DS} 
& Tensor size $(m,p,n)$ & $(60,1,60)$ \\
& SVD rank cap $r_{\mathrm{tdmd}}$ & $55$\\
& Least-squares ridge parameter & $10^{-6}$ \\
\midrule
\multirow{2}{*}{POD-LD} 
& Low-rank truncation $r_{\mathrm{mc}}$ & $10$ \\
& Ridge parameter & $10^{-6}$ \\
\midrule
\multirow{2}{*}{Tucker} 
& Tucker ranks & $(14,14)$ \\
& Ridge parameter & $10^{-6}$ \\
\midrule
\multirow{3}{*}{GR} 
& Graph type & $60 \times 60$ grid graph \\
& \multirow{2}{*}{Regularization parameters} & spatial Laplacian $10^{-2}$\\ 
&&temporal smoothing $5\times10^{-2}$ \\
\midrule
\multirow{2}{*}{Vector DS} 
& Learning ridge parameter & $10^{-4}$ \\
& Reconstruction ridge parameter & $10^{-3}$ \\
\midrule
\multirow{5}{*}{SRS}
& \multirow{2}{*}{Spectral cutoff rule} & energy ratio $0.995$\\
&& max $300$, min $100$ modes \\
& Effective spectral cutoff & $100$ modes \\
& Reconstruction parameter & selected from training data \\
& Selected value & $10^{-5}$ \\
\bottomrule
\end{tabular}
\end{table}

For the tensor dynamical sampling method, each monthly SST frame is represented as a third-order tensor of size $60 \times 1 \times 60$. The t-product system operator is learned from the first $60$ months using the slice-wise truncated DMD-type procedure described above.

The subsequent six months, Months $61$--$66$, form the dynamical-sampling and evaluation horizon. For each random trial, a fixed spatial mask is applied to all six months. Tensor DS uses these masked multi-time observations to recover the initial testing state at Month $61$, and the learned t-product system operator is then applied recursively to reconstruct the remaining months in the same horizon. No observations beyond Month $66$ are used in this experiment.

The remaining rank and regularization parameters in Table~\ref{tab:sst-hyperparameters} specify the reduced models used by the competing baselines. For POD-LD, $r_{\mathrm{mc}}$ is the dimension of the learned latent subspace. For Tucker, the listed ranks are the two spatial Tucker ranks used to form the reduced core dynamics. The graph-based baselines, GR and SRS, use the same $60\times 60$ grid graph. Vector DS learns a full vectorized dynamical operator from the training snapshots and reconstructs the initial testing state by ridge-regularized multi-time least squares. SRS performs framewise static graph-spectral reconstruction with the spectral cutoff and regularization parameter selected from the training data.

The SST domain used in this experiment spans approximately $29.5^\circ\mathrm{N}$--$29.5^\circ\mathrm{S}$ and $120.5^\circ\mathrm{E}$--$179.5^\circ\mathrm{E}$, covering a broad portion of the tropical western Pacific. This region overlaps the Indo-Pacific warm pool, which is one of the warmest oceanic regions on Earth and plays a major role in global oceanographic and climatic processes \cite{dedeckker2016warmpool}. The two highlighted regions of interest (ROIs) are therefore not arbitrary visual patches. The first ROI, located over approximately $6.5^\circ\mathrm{N}$--$23.5^\circ\mathrm{S}$ and $123.5^\circ\mathrm{E}$--$153.5^\circ\mathrm{E}$, lies on the western-to-southern flank of the warm-pool region, where the SST field exhibits a pronounced warm band and stronger thermal gradients. The second ROI, located over approximately $5.5^\circ\mathrm{N}$--$12.5^\circ\mathrm{S}$ and $160.5^\circ\mathrm{E}$--$178.5^\circ\mathrm{E}$, lies close to the reported heat-center region of the western Pacific warm pool, whose climatological mean position is near $0.4^\circ\mathrm{S},169.0^\circ\mathrm{E}$ \cite{hu2012heatcenter}. These ROIs provide physically meaningful local windows for comparing whether a method preserves warm-band structure, thermal contrast, and localized SST gradients.

\medskip \noindent
\textbf{Quantitative comparison.}

\begin{figure}[ht!]
    \centering
    \includegraphics[width=0.32\textwidth]{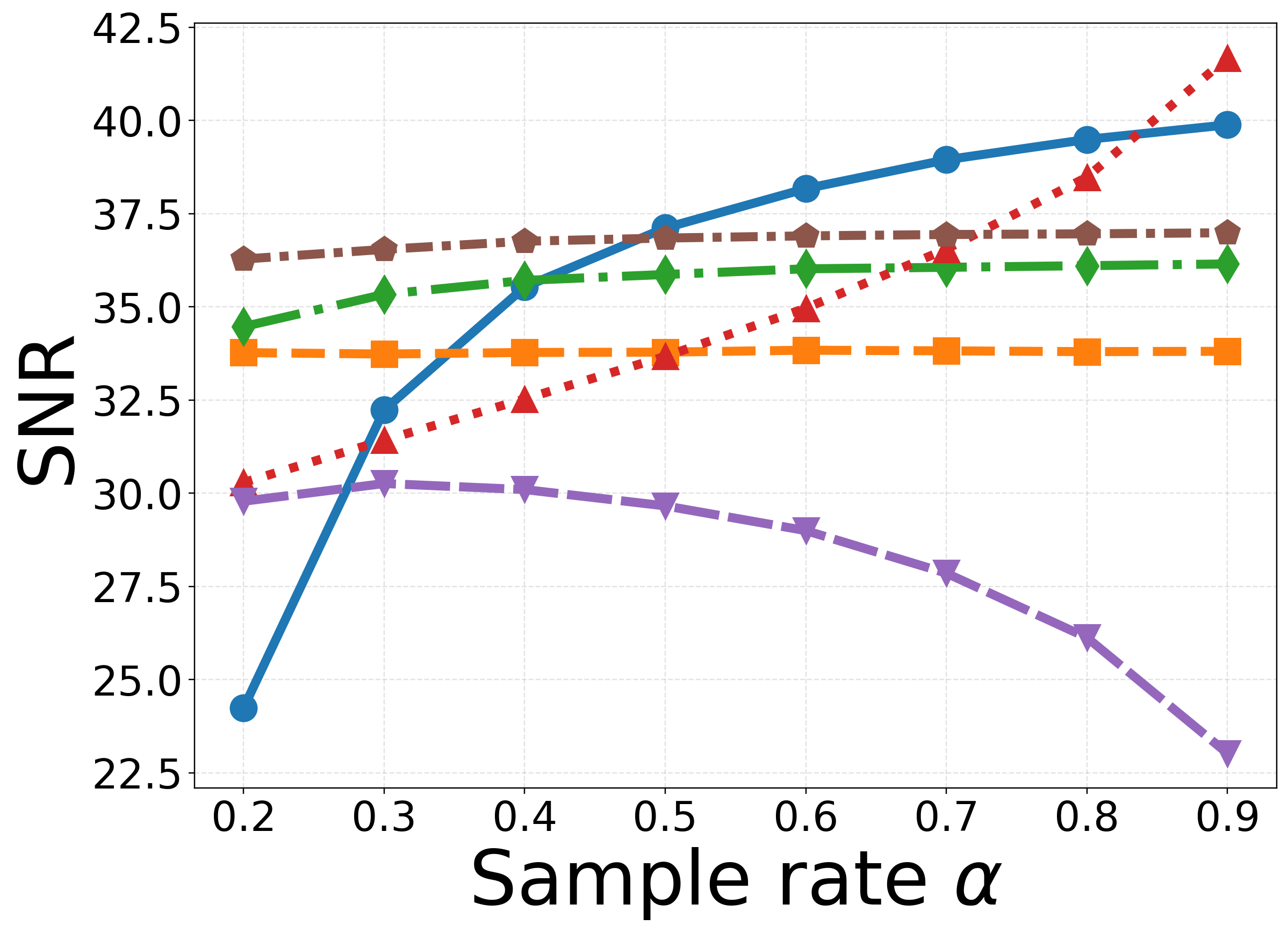}
    \includegraphics[width=0.32\textwidth]{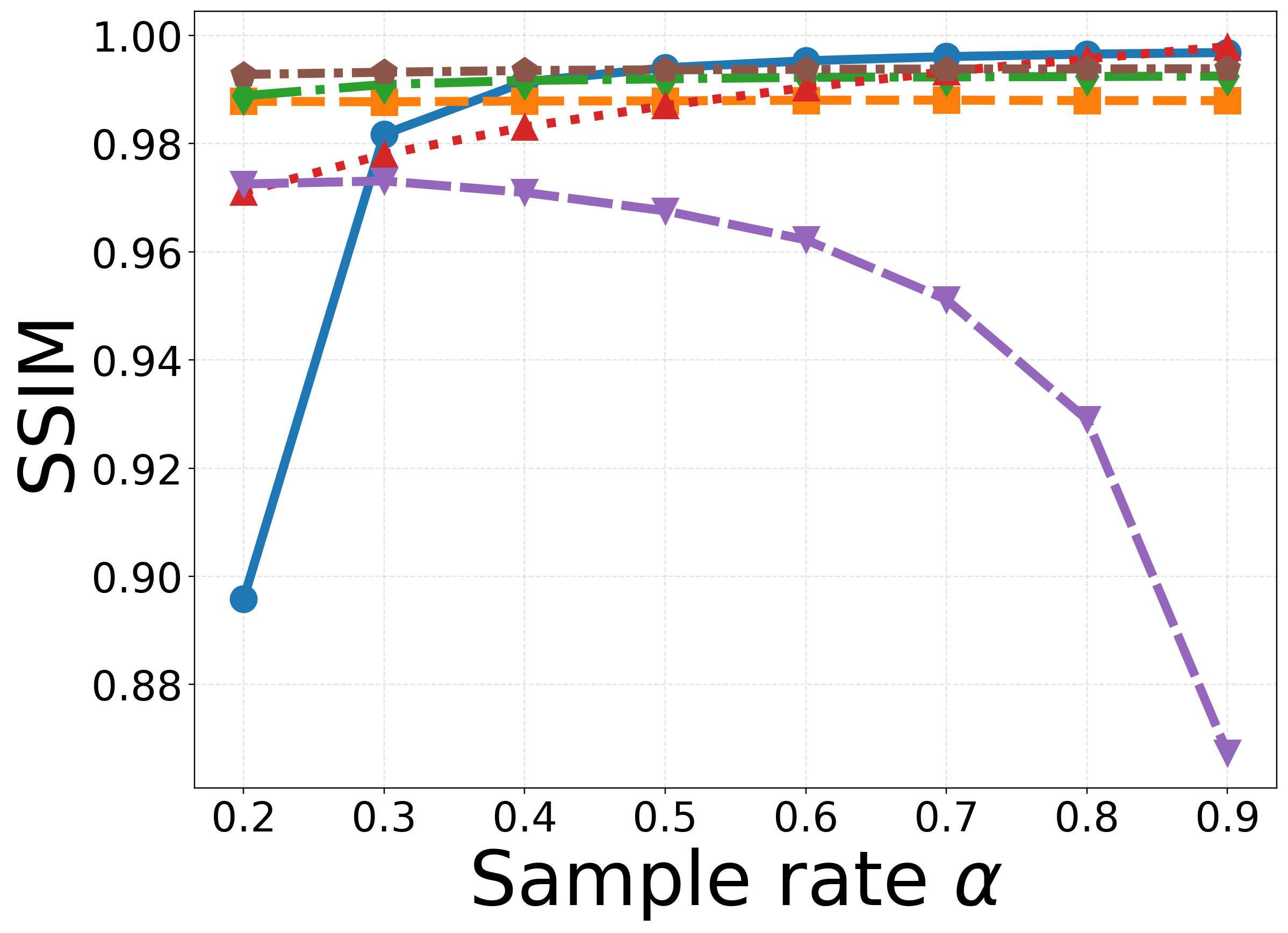}
    \includegraphics[width=0.32\textwidth]{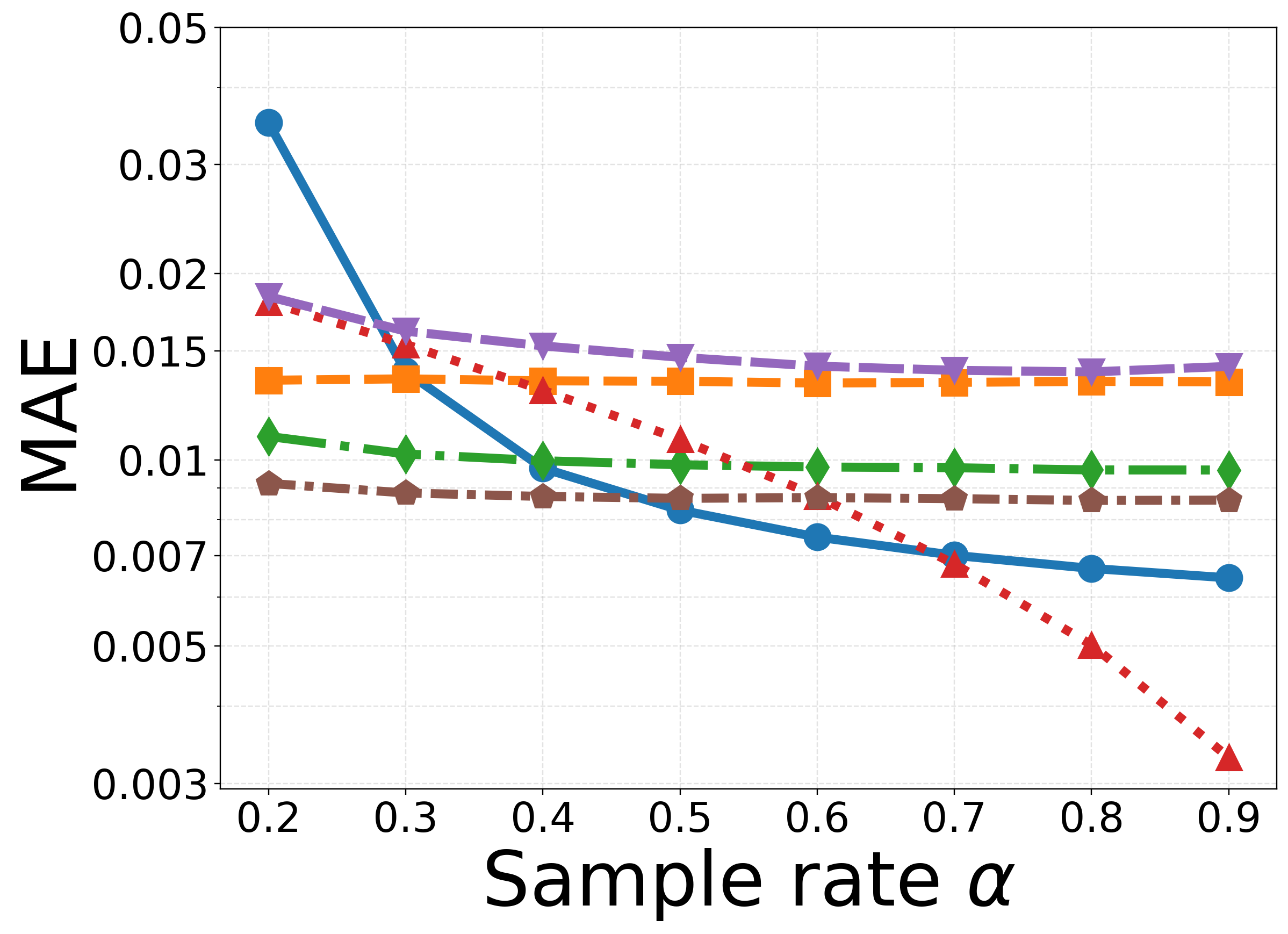}

  \vspace{-0.2in}
    \includegraphics[width=0.88\textwidth]{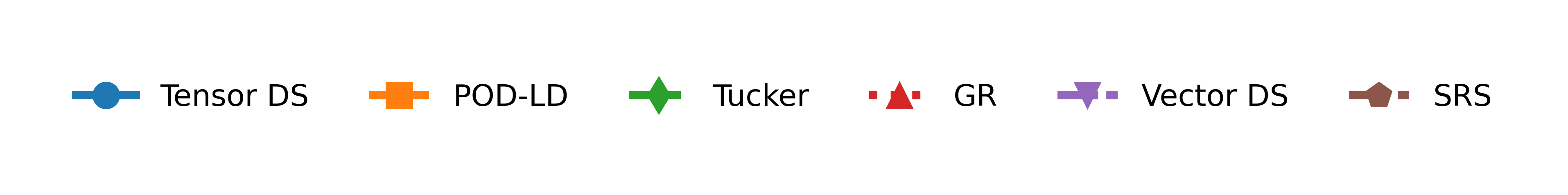}
    \caption{Initial-state metric comparison on the SST dataset. From left to right: SNR, SSIM, and MAE versus sample rate $\alpha$. Higher SNR and SSIM and lower MAE indicate better reconstruction. Tensor DS improves rapidly as $\alpha$ increases and gives strong initial-state recovery in the moderate-to-high sampling regime. SRS is also competitive on this smooth field, while the full Vector DS baseline is less effective for the SST initial state. The shared legend is shown separately below.}
    \label{fig:sst_initial_metrics}
\end{figure}

\begin{figure}[ht!]
    \centering
    \includegraphics[width=0.32\textwidth]{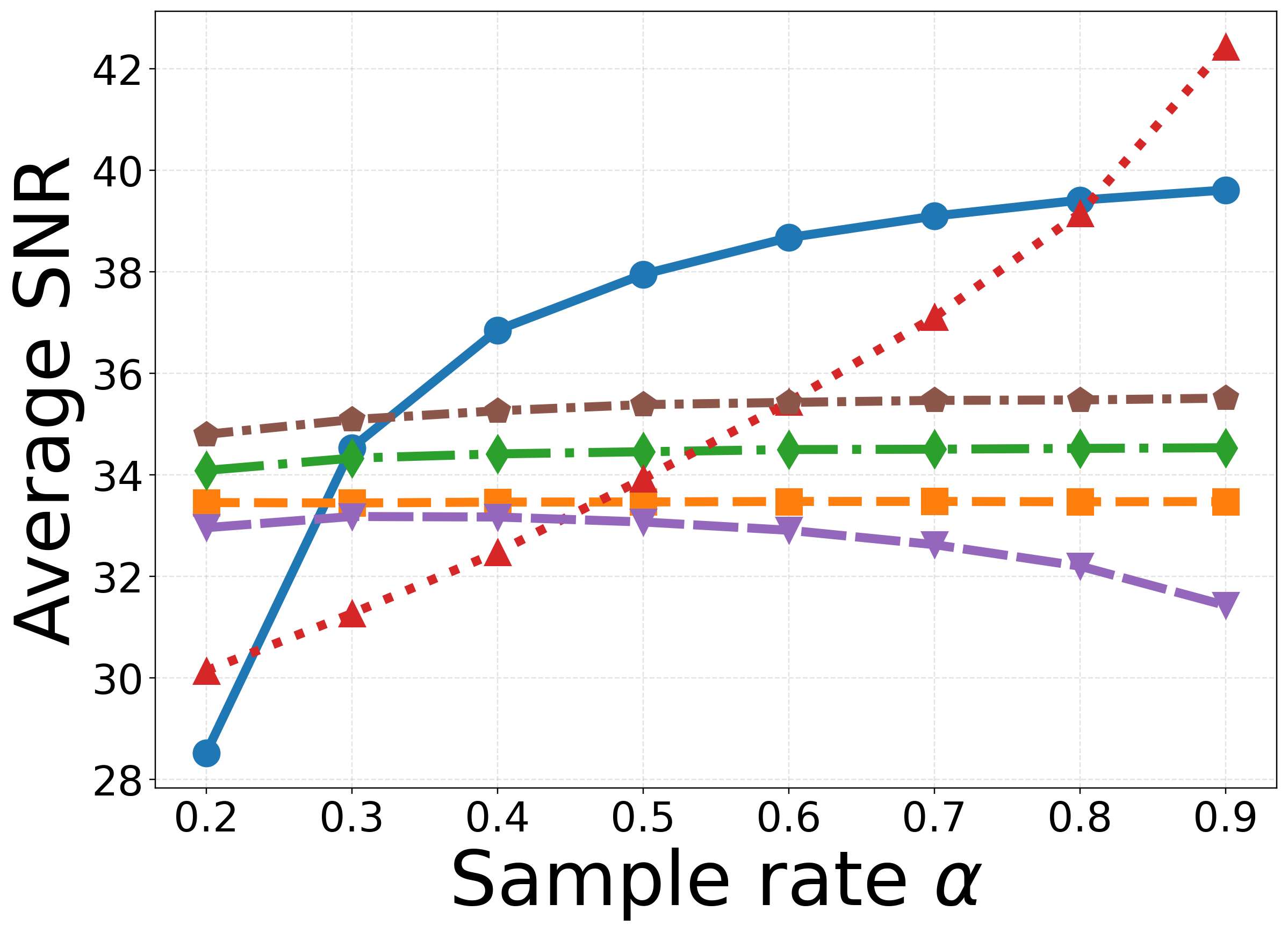}
    \includegraphics[width=0.32\textwidth]{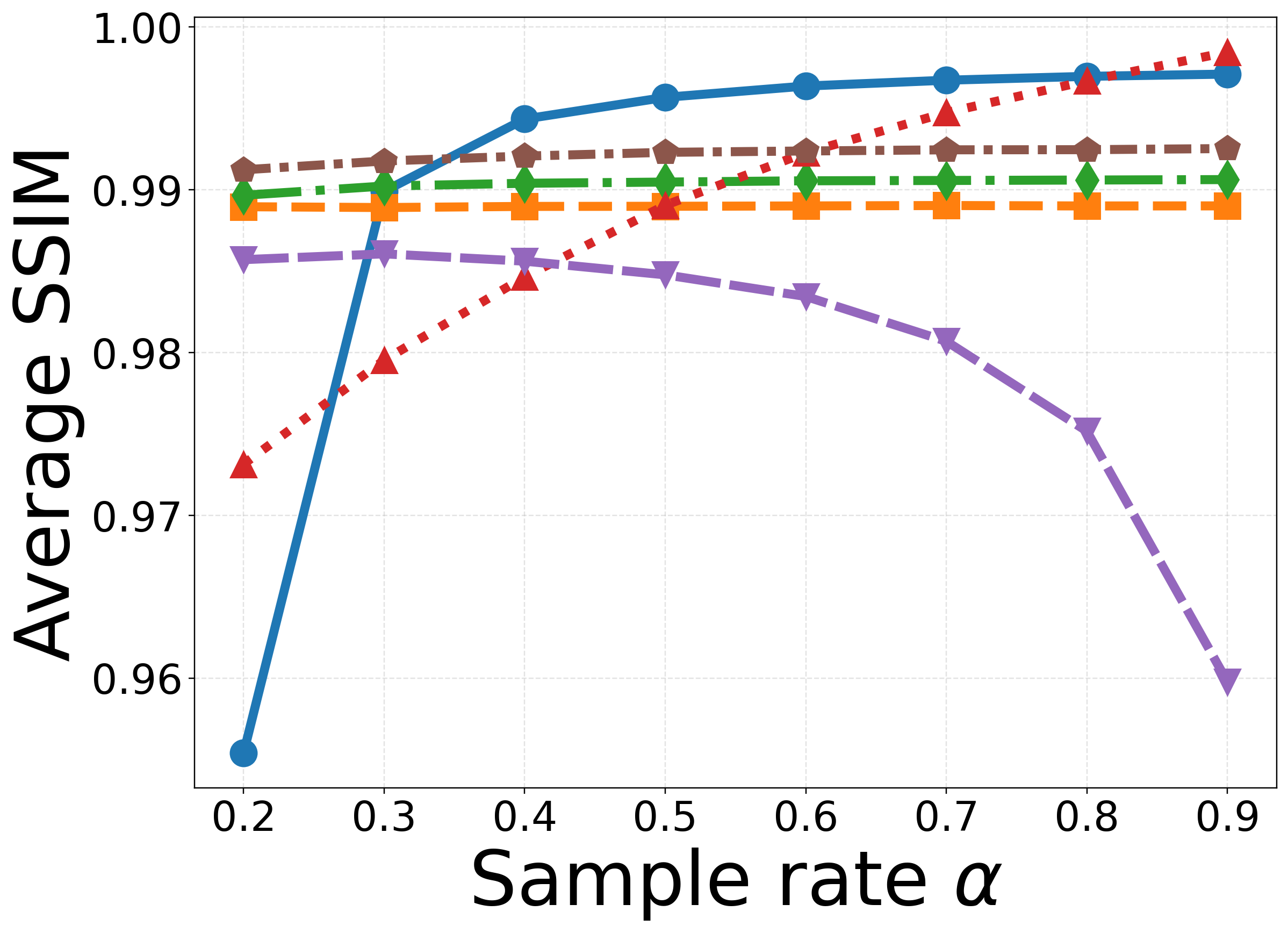}
    \includegraphics[width=0.32\textwidth]{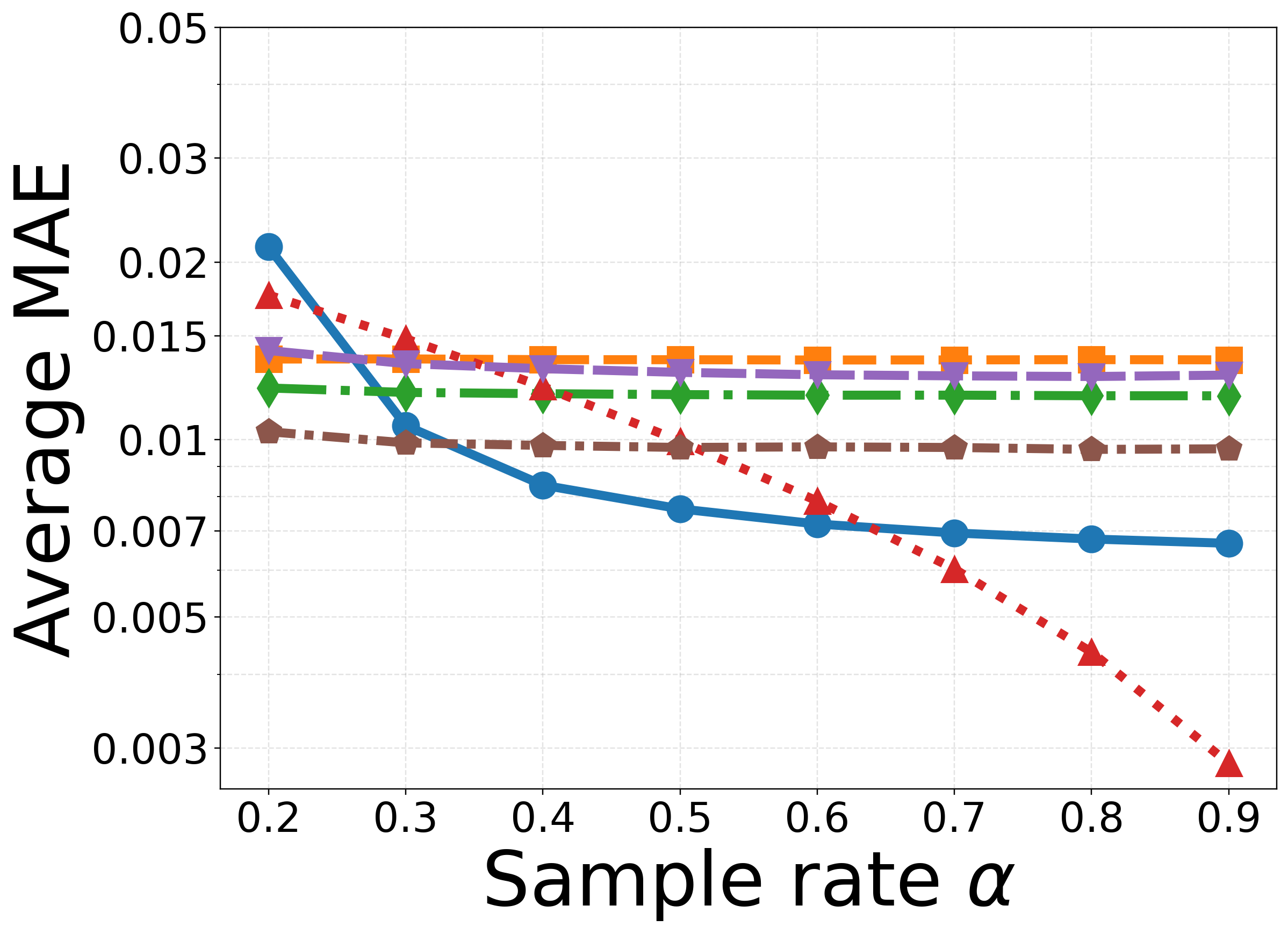}

    \vspace{-0.2in}
    \includegraphics[width=0.88\textwidth]{Figures/Fig4-5/sst_legend_only.png}

    \caption{Average reconstruction performance over the first six testing months on the SST dataset. From left to right: averaged SNR, averaged SSIM, and averaged MAE versus sample rate $\alpha$. The advantage of Tensor DS persists over the reconstruction horizon. Once enough spatial samples are available, t-product system model maintains better average reconstruction quality and lower average error than the competing baselines. The shared legend is shown separately below.}
    \label{fig:sst_6month_avg_metrics}
\end{figure}
Figure~\ref{fig:sst_initial_metrics} shows the initial-state reconstruction quality on the SST dataset. Tensor DS is not uniformly dominant at the lowest reported sampling rate, but it improves rapidly as $\alpha$ increases and becomes one of the strongest performers in SNR, SSIM, and MAE in the moderate-to-high sampling regime. This behavior is consistent with the role of spatial sampling in the recovery theory. Once the observed entries provide enough information for the relevant tensor recovery blocks, the learned tensor dynamics can be exploited effectively for initial-state reconstruction. The competitive performance of SRS reflects that the SST field is spatially smooth and has a strong low-frequency graph-spectral component.

Figure~\ref{fig:sst_6month_avg_metrics} further reports the average performance over the first six testing months. The same pattern persists over the temporal reconstruction horizon. Tensor DS maintains strong average accuracy once the sampling rate is sufficiently large, whereas the competing low-rank and graph-based baselines either saturate or oversmooth the evolving SST field. The full Vector DS baseline is less competitive on SST, which indicates that vectorization followed by a high-dimensional homogeneous linear fit does not by itself provide the same structural constraint as the t-product model. 
Thus the advantage of Tensor DS is not limited to a single initial frame, but reflects a more faithful learned dynamical representation of the spatio-temporal SST evolution.

For the fixed-rate monthly error table and the qualitative visualization below, we use the representative sampling rate $\alpha=0.6$. The quantitative curves in Figures~\ref{fig:sst_initial_metrics} and~\ref{fig:sst_6month_avg_metrics} are obtained by averaging over the random-mask trials for each reported sampling rate.

\begin{table}[ht!]
\centering
\caption{SST monthly RE at $\alpha=0.6$ with $T=6$.}
\label{tab:sst_re_t6_alpha06}
\setlength{\tabcolsep}{4pt}
\resizebox{\linewidth}{!}{
\begin{tabular}{c|cccccc}
\toprule
Step & Tensor DS & POD-LD & Tucker & GR & Vector DS & SRS \\
\midrule
\midrule
1 & $\mathbf{1.23\times10^{-2}}$ & $2.03\times10^{-2}$ & $1.58\times10^{-2}$ & $1.79\times10^{-2}$ & $3.55\times10^{-2}$ & $\underline{1.43\times10^{-2}}$ \\
2 & $\mathbf{1.07\times10^{-2}}$ & $2.09\times10^{-2}$ & $1.91\times10^{-2}$ & $\underline{1.46\times10^{-2}}$ & $1.87\times10^{-2}$ & $1.66\times10^{-2}$ \\
3 & $\mathbf{1.21\times10^{-2}}$ & $2.37\times10^{-2}$ & $2.08\times10^{-2}$ & $2.05\times10^{-2}$ & $2.02\times10^{-2}$ & $\underline{1.93\times10^{-2}}$ \\
4 & $\mathbf{1.16\times10^{-2}}$ & $2.00\times10^{-2}$ & $2.00\times10^{-2}$ & $\underline{1.53\times10^{-2}}$ & $1.95\times10^{-2}$ & $1.79\times10^{-2}$ \\
5 & $\underline{1.18\times10^{-2}}$ & $2.32\times10^{-2}$ & $1.98\times10^{-2}$ & $\mathbf{1.02\times10^{-2}}$ & $1.95\times10^{-2}$ & $1.69\times10^{-2}$ \\
6 & $\mathbf{1.19\times10^{-2}}$ & $2.79\times10^{-2}$ & $1.75\times10^{-2}$ & $2.31\times10^{-2}$ & $1.75\times10^{-2}$ & $\underline{1.43\times10^{-2}}$ \\
\bottomrule
\end{tabular}
}
\par\smallskip
\footnotesize Bold and underlined entries denote the smallest and second-smallest RE values in each row, respectively.
\end{table}

Table~\ref{tab:sst_re_t6_alpha06} reports the trial-averaged monthly relative errors at $\alpha=0.6$ over the six-month horizon. Tensor DS gives the smallest error in five of the six months and is second-best in the remaining month. SRS is often the closest competitor, reflecting the effectiveness of graph-spectral interpolation for the dominant smooth SST component. GR gives the smallest error at Month $65$ but is less consistent across the full horizon, while Vector DS is less effective for this dataset. The qualitative comparison below further shows that a small global RE does not always imply equally accurate preservation of localized thermal contrast in the highlighted ROIs.

\medskip \noindent
\textbf{Qualitative comparison.}

\begin{figure}[ht!]
    \centering
    \includegraphics[width=1\linewidth]{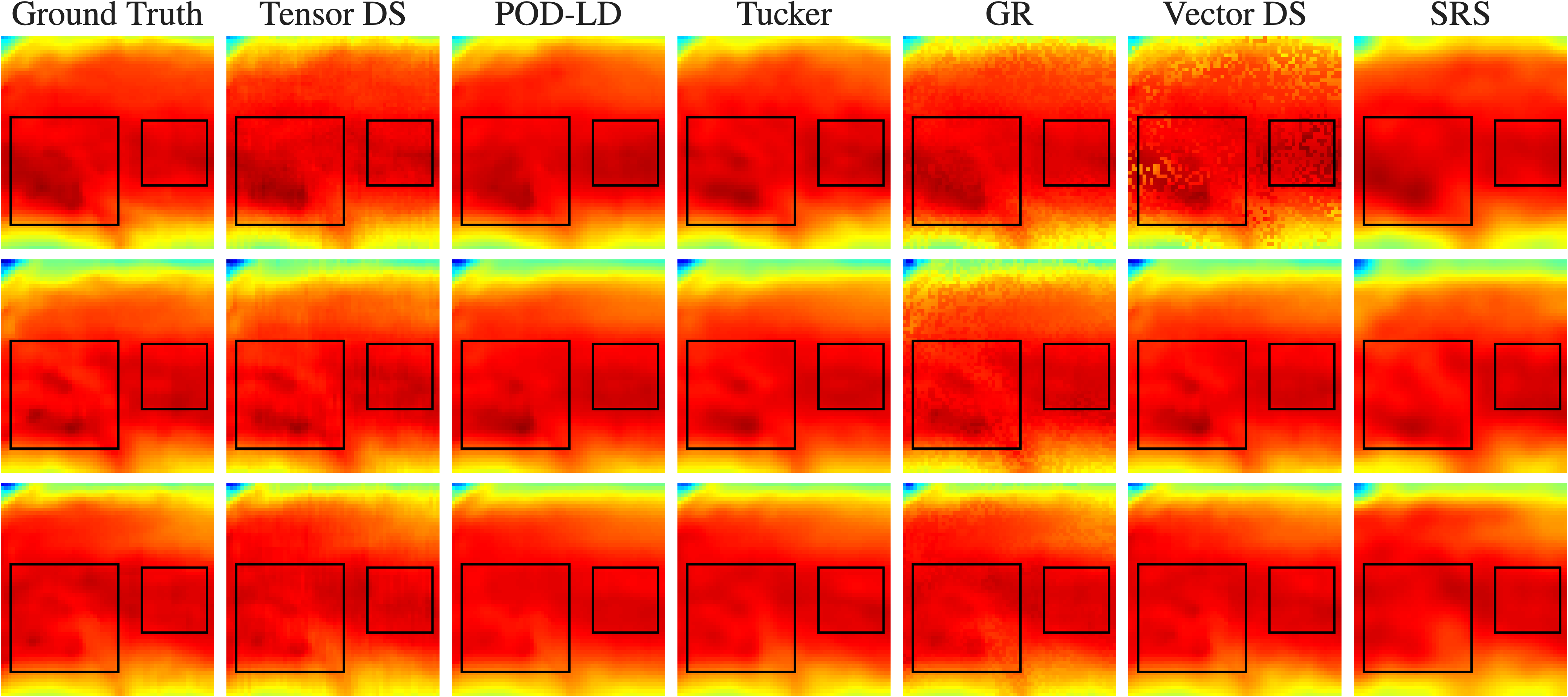}
    \caption{Visual comparison on SST data from Months 61, 63 and 65, with highlighted regions of interest. The selected domain lies in the tropical western Pacific and overlaps the Indo-Pacific warm pool. The two boxed ROIs highlight a warm-pool flank region and a heat-center-adjacent region of the western Pacific warm pool, where warm-band structure, localized SST gradients, and thermal contrast are more pronounced. Tensor DS faithfully preserves the warm-band geometry and local ROI contrast. SRS captures the large-scale smooth temperature field well, but less sharply preserves localized thermal contrast and warm-band boundary detail. POD-LD and Tucker lose some local thermal detail, while GR and Vector DS show visible local artifacts in the highlighted regions.}
    \label{fig:sst_visual_compare_box}
\end{figure}

Figure~\ref{fig:sst_visual_compare_box} presents the qualitative reconstruction results from Months 61, 63, and 65. Tensor DS most faithfully reproduces the large-scale warm-band pattern and preserves the smooth thermal transitions observed in the ground truth. POD-LD and Tucker recover the overall temperature distribution but tend to blur local details. Among the graph-based baselines, SRS captures the dominant smooth SST component well, which is consistent with its competitive RE values in Table~\ref{tab:sst_re_t6_alpha06}, but the ROI views show some loss of localized thermal contrast and warm-band boundary detail. GR shows local speckled artifacts and may distort thermal contrast in the highlighted regions. Vector DS also shows local artifacts near the boxed regions, especially at the initial frame. The boxed ROIs further highlight the local differences. Tensor DS better preserves the continuity and shape of the warm structures, especially in the warm-pool-related regions where thermal gradients are more pronounced.

\subsection{Turbulence velocity reconstruction}
\

\medskip \noindent
\textbf{Experimental setup and ROI interpretation.}
Table~\ref{tab:velocity-hyperparameters} summarizes the experimental settings used in the velocity experiments. 
The overall sampling protocol, compared methods, operator-learning procedure, and metrics are the same as in the common real-data setup above. The table records the dataset-specific dimensions and the method parameters used for this dataset.

\begin{table}[ht!]
\centering
\caption{Experimental settings for the turbulence velocity experiments.}
\label{tab:velocity-hyperparameters}
\begin{tabular}{lll}
\toprule
Category & Setting & Value \\
\midrule
\midrule
\multirow{6}{*}{Dataset} 
& Scalar field & velocity magnitude \\
& 2D slice & third-coordinate index $16$ \\
& Frame size & $32 \times 112$ \\
& Training frames & $1\text{-}60$ \\
& Dynamical-sampling / evaluation frames & $61\text{-}66$ \\
& Normalization & min--max scaling to $[0,1]$ \\
\midrule
\multirow{4}{*}{Sampling} 
& Sampling rates & $0.2,0.3,\dots,0.9$ \\
& Number of trials & $10$ \\
& Sampling mask & fixed across the observed frames \\
& Noise level $\sigma$ & $0$ \\
\midrule
\multirow{3}{*}{Tensor DS} 
& Tensor size $(m,p,n)$ & $(32,1,112)$ \\
& SVD rank cap $r_{\mathrm{tdmd}}$ & $60$ \\
& Ridge parameter & $10^{-6}$ \\
\midrule
\multirow{2}{*}{POD-LD} 
& Low-rank truncation $r_{\mathrm{mc}}$ & $10$ \\
& Ridge parameter & $10^{-6}$ \\
\midrule
\multirow{2}{*}{Tucker} 
& Tucker ranks & $(14,14)$ \\
& Ridge parameter & $10^{-6}$ \\
\midrule
\multirow{3}{*}{GR} 
& Graph type & $32 \times 112$ grid graph \\
& \multirow{2}{*}{Regularization parameters} & spatial Laplacian $10^{-2}$\\ 
&&temporal smoothing $5\times10^{-2}$ \\
\midrule
\multirow{2}{*}{Vector DS} 
& Learning ridge parameter & $10^{-4}$ \\
& Reconstruction ridge parameter & $10^{-3}$ \\
\midrule
\multirow{5}{*}{SRS}
& \multirow{2}{*}{Spectral cutoff rule} & energy ratio $0.995$\\
&& max $300$, min $100$ modes \\
& Effective spectral cutoff & $100$ modes \\
& Reconstruction parameter & selected from training data \\
& Selected value & $10^{-5}$ \\
\bottomrule
\end{tabular}
\end{table}

We also evaluate the methods on a velocity dataset extracted from the JHTDB forced isotropic turbulence dataset \cite{li2008public,perlman2007data}. Each raw frame contains three velocity components on a three-dimensional spatial grid. We convert the velocity vector to its magnitude and take the two-dimensional slice at the third-coordinate index $16$. This index refers to the original three-dimensional turbulence field, not to the third mode of the tensor representation. The resulting sequence consists of $32\times112$ velocity-magnitude frames. The first $60$ frames are used for training, and Frames $61$--$66$ form the dynamical-sampling and evaluation horizon.

For Tensor DS, each velocity-magnitude frame is represented as a third-order tensor of size $32\times1\times112$. The t-product system operator is learned from the training frames using the same slice-wise truncated DMD-type procedure described in the common setup. For each random trial, a fixed spatial mask is applied to all six observed frames. Tensor DS uses these masked multi-time observations to recover the initial testing state at Frame $61$, and the learned t-product system operator is then applied recursively to reconstruct the remaining frames in the same horizon.

The highlighted ROIs are selected from local patches where the velocity-magnitude field exhibits stronger instantaneous spatial variation and richer fine-scale texture. In homogeneous isotropic turbulence, such localized complexity is commonly associated with intermittency and intense vortical structures that appear as filamentary or worm-like regions in the flow \cite{she1990intermittent,jimenez1993structure,ghira2022characteristics,vela2021synchronisation}. These ROIs therefore provide informative local windows for comparing reconstruction quality.
In contrast with the SST experiment, which mainly tests recovery of a smooth geophysical field with coherent thermal bands, the velocity experiment tests whether the same tensor dynamical model can preserve sharper transitions and localized texture in a more irregular turbulent field.

\medskip \noindent
\textbf{Quantitative comparison.}

\begin{figure*}[ht!]
\centering
\includegraphics[width=0.32\textwidth]{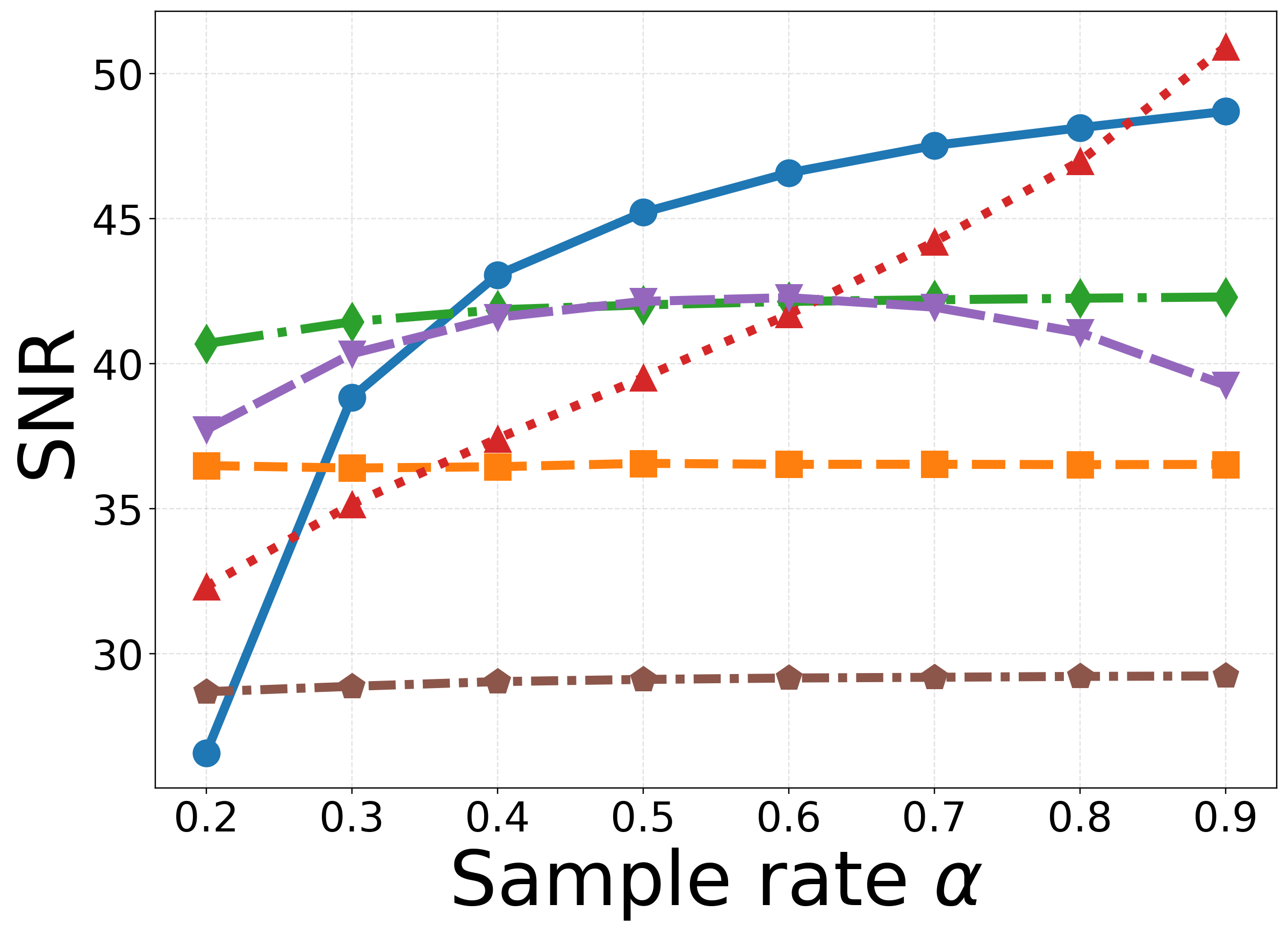}
\includegraphics[width=0.32\textwidth]{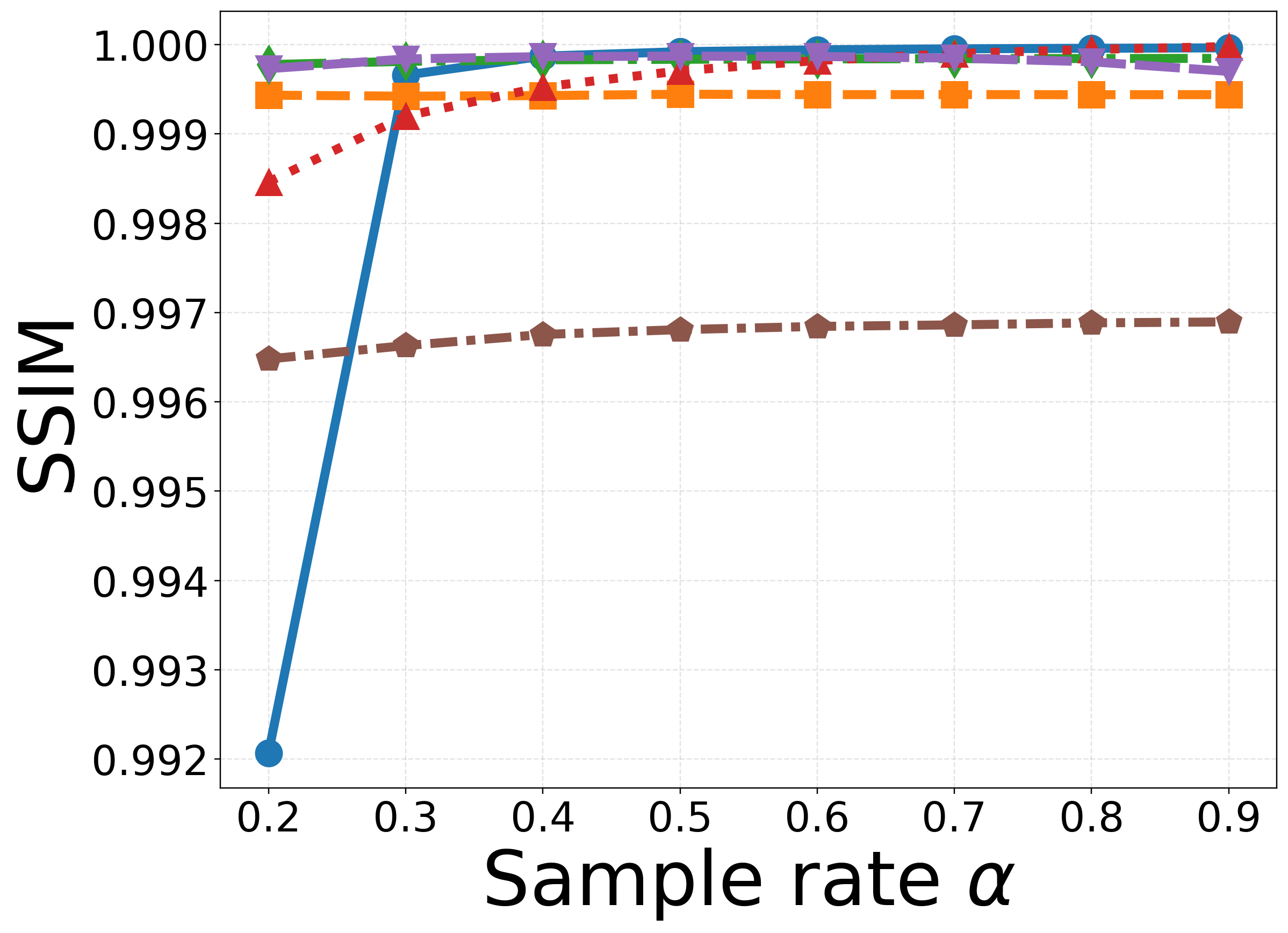}
\includegraphics[width=0.32\textwidth]{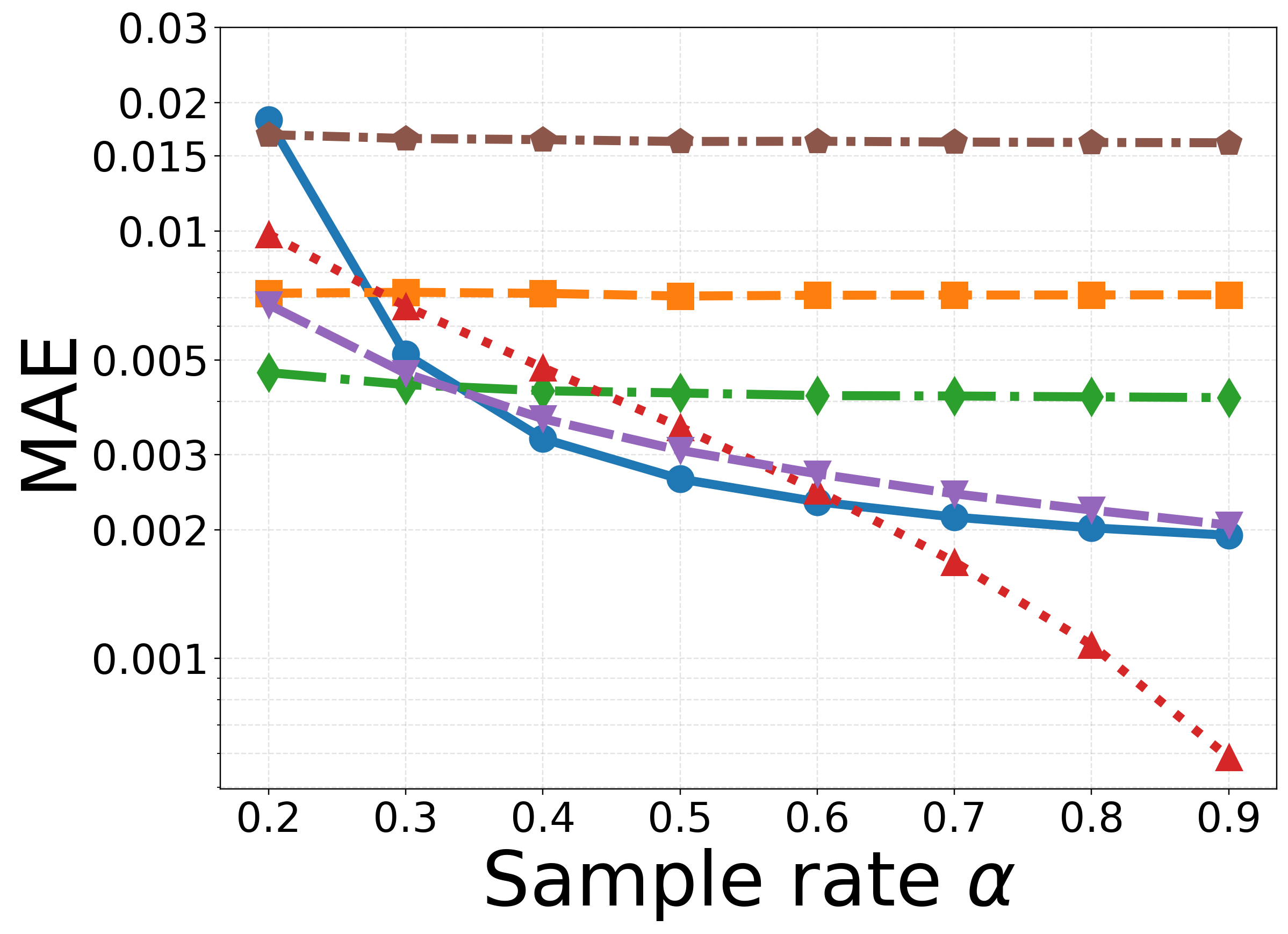}

  \vspace{-0.2in}
\includegraphics[width=0.88\textwidth]{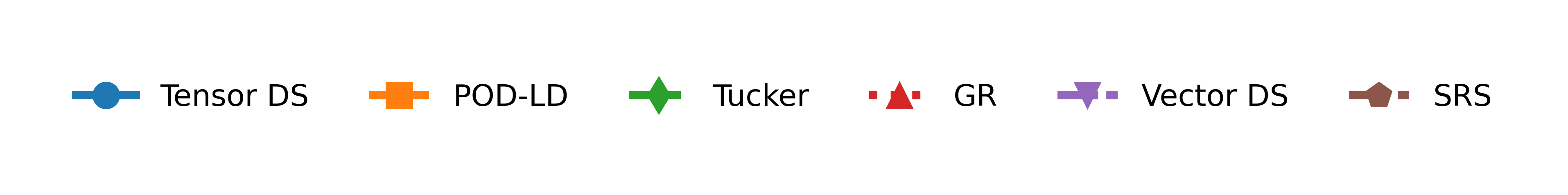}
\caption{Initial-state metric comparison on the velocity dataset. From left to right: SNR, SSIM, and MAE versus sample rate $\alpha$. The MAE panel is displayed on a logarithmic scale. Tensor DS gives the strongest overall initial-state recovery across the reported sampling range, showing that the t-product system model remains effective for a velocity field with sharper local variations than the SST data.}
\label{fig:velocity_initial_metrics}
\end{figure*}

\begin{figure*}[ht!]
\centering
\includegraphics[width=0.32\textwidth]{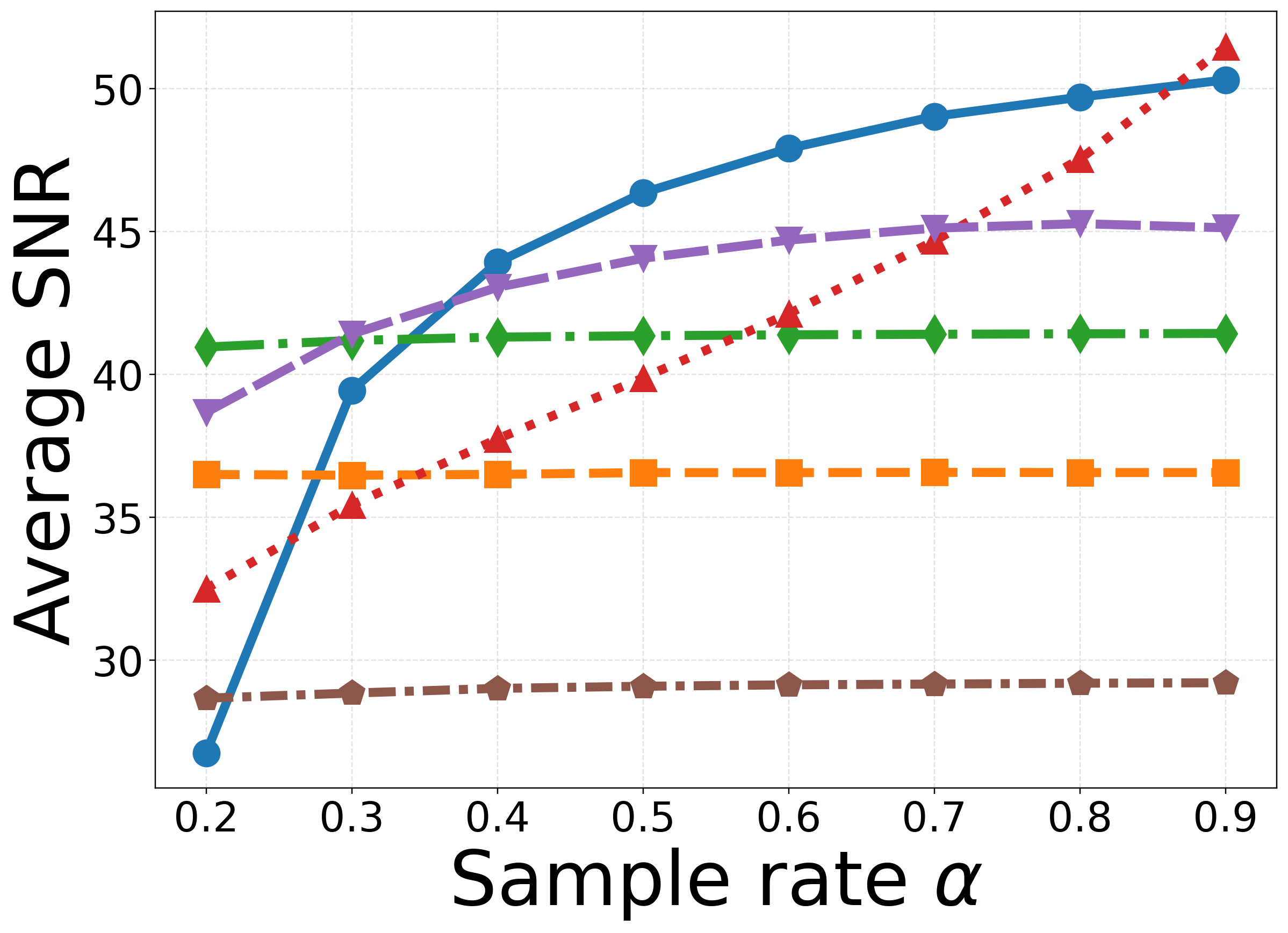}
\includegraphics[width=0.32\textwidth]{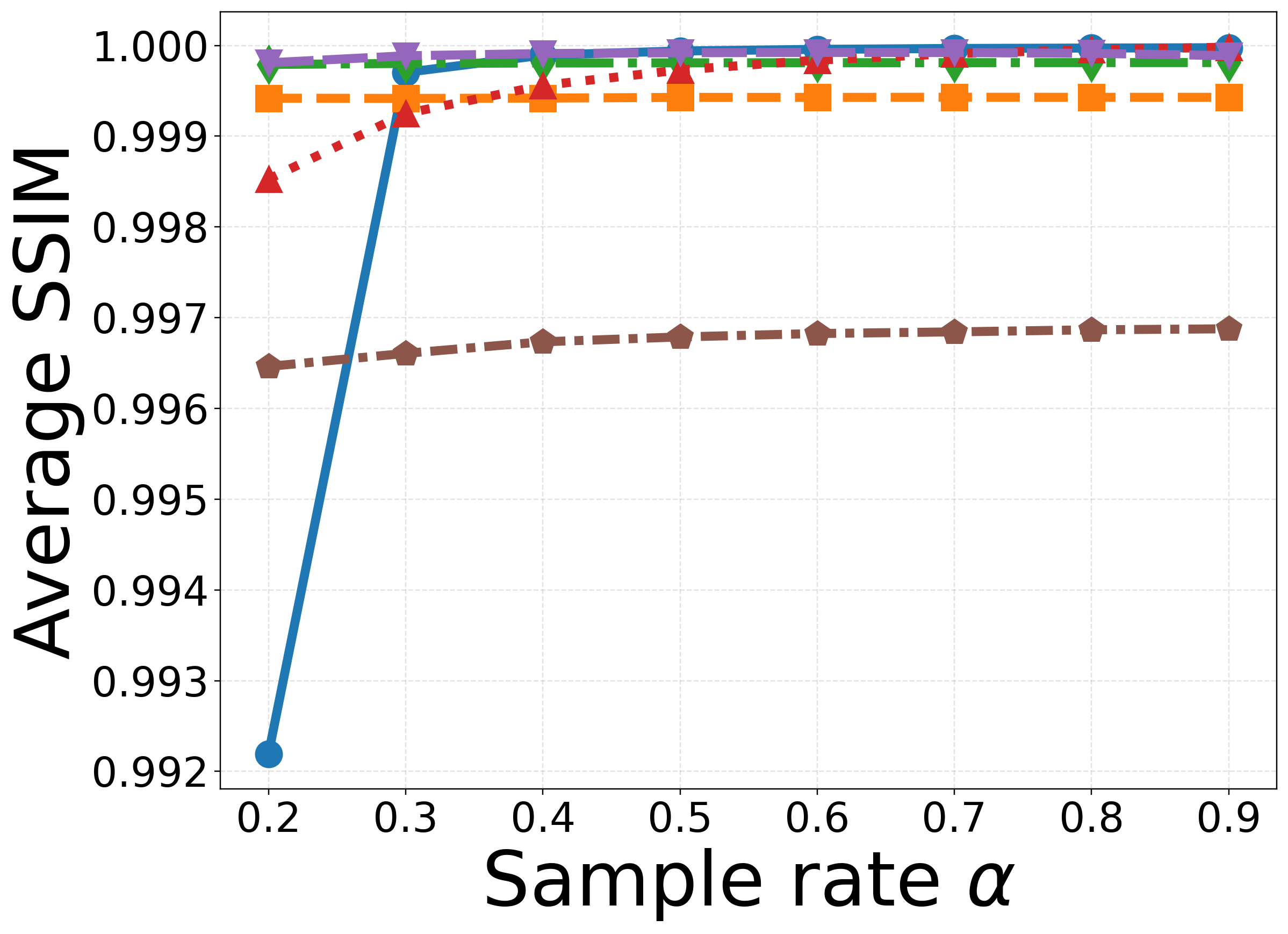}
\includegraphics[width=0.32\textwidth]{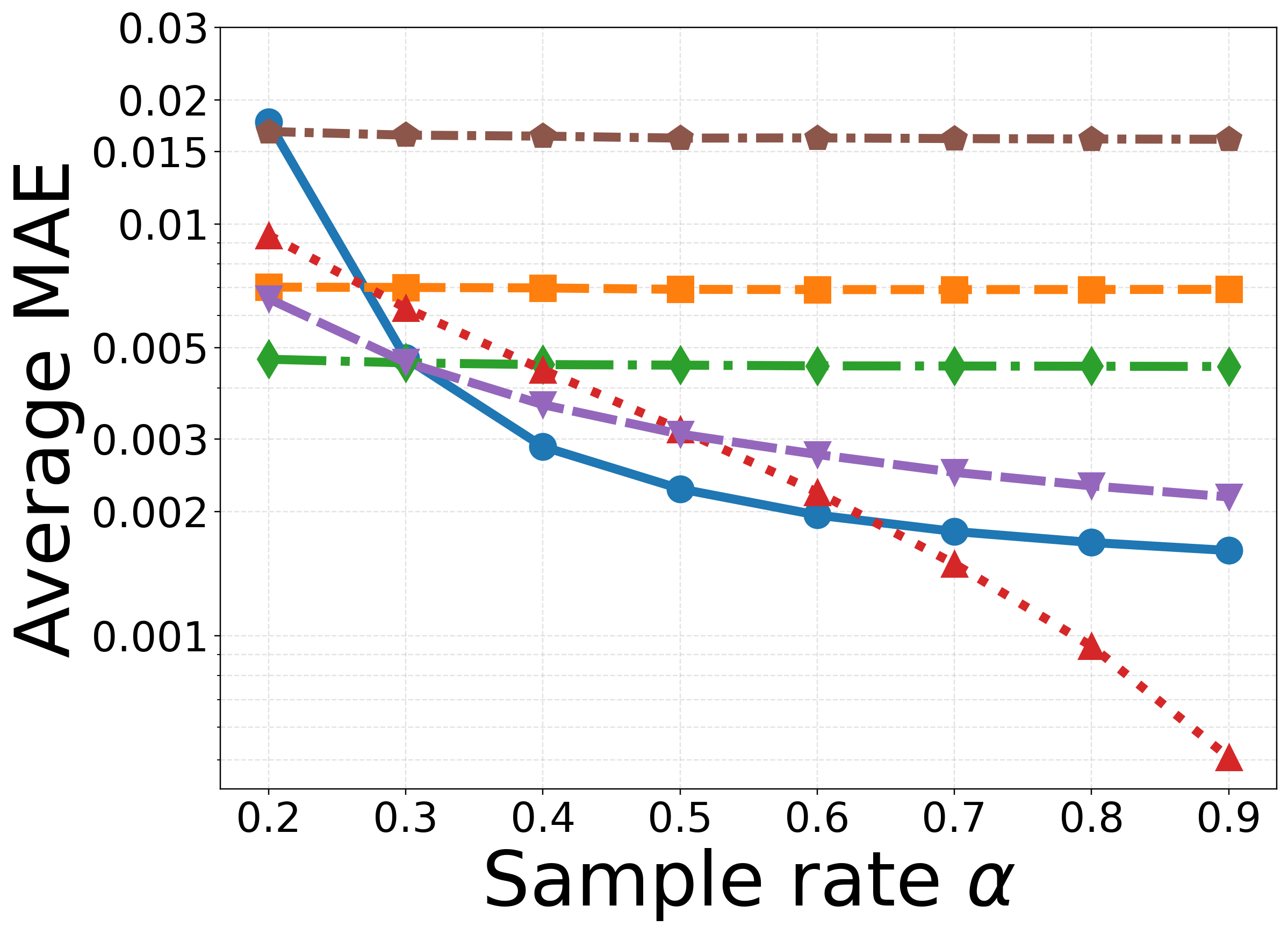}

  \vspace{-0.2in}
\includegraphics[width=0.88\textwidth]{Figures/Fig7-8/velocity_legend_only.png}
\caption{Average evolving-state metric comparison on the velocity dataset over the prediction horizon. From left to right: averaged SNR, averaged SSIM, and averaged MAE versus sample rate $\alpha$. The MAE panel is displayed on a logarithmic scale. Tensor DS maintains its advantage over the reconstruction horizon, indicating that the learned tensor dynamics preserve temporal structure more effectively than the vectorized low-rank, Tucker-type, and graph-based baselines.}
\label{fig:velocity_evolve_metrics}
\end{figure*}

Figure~\ref{fig:velocity_initial_metrics} presents the initial-state reconstruction performance on the velocity dataset. Tensor DS gives the strongest overall performance across the reported sampling range, producing higher SNR and SSIM together with lower MAE than the competing methods. This is notable because the velocity field is less smooth than the SST field and contains more localized texture. The result suggests that the gain of Tensor DS is not limited to smooth geophysical data.

Figure~\ref{fig:velocity_evolve_metrics} reports the average performance over the six-frame reconstruction horizon. Tensor DS remains the most stable and accurate method throughout the multi-step reconstruction. The improvement over POD-LD and Tucker indicates that preserving the t-product tensor organization is more effective than learning dynamics only after matrix or Tucker-core reduction. The improvement over GR, Vector DS, and SRS further shows that graph-spectral structure alone does not fully capture the localized turbulent patterns in this experiment.

For the fixed-rate framewise error table and the qualitative visualization below, we use the representative sampling rate $\alpha=0.6$. The quantitative curves in Figures~\ref{fig:velocity_initial_metrics} and~\ref{fig:velocity_evolve_metrics} are obtained by averaging over the random-mask trials for each reported sampling rate.

\begin{table}[ht!]
\centering
\caption{Velocity framewise RE at $\alpha=0.6$ with $T=6$.}
\label{tab:velocity_re_t6_alpha06}
\setlength{\tabcolsep}{4pt}
\resizebox{\linewidth}{!}{
\begin{tabular}{c|cccccc}
\toprule
Step & Tensor DS & POD-LD & Tucker & GR & Vector DS & SRS \\
\midrule
\midrule
1 & $\mathbf{4.69\times10^{-3}}$ & $1.49\times10^{-2}$ & $7.81\times10^{-3}$ & $8.24\times10^{-3}$ & $\underline{7.69\times10^{-3}}$ & $3.48\times10^{-2}$ \\
2 & $\mathbf{4.04\times10^{-3}}$ & $1.37\times10^{-2}$ & $8.27\times10^{-3}$ & $7.77\times10^{-3}$ & $\underline{4.36\times10^{-3}}$ & $3.49\times10^{-2}$ \\
3 & $\mathbf{3.79\times10^{-3}}$ & $1.40\times10^{-2}$ & $8.79\times10^{-3}$ & $7.67\times10^{-3}$ & $\underline{4.97\times10^{-3}}$ & $3.49\times10^{-2}$ \\
4 & $\mathbf{3.66\times10^{-3}}$ & $1.69\times10^{-2}$ & $9.28\times10^{-3}$ & $7.71\times10^{-3}$ & $\underline{6.87\times10^{-3}}$ & $3.50\times10^{-2}$ \\
5 & $\mathbf{4.04\times10^{-3}}$ & $2.22\times10^{-2}$ & $9.92\times10^{-3}$ & $\underline{7.90\times10^{-3}}$ & $9.81\times10^{-3}$ & $3.51\times10^{-2}$ \\
6 & $\mathbf{5.82\times10^{-3}}$ & $2.93\times10^{-2}$ & $1.10\times10^{-2}$ & $\underline{8.48\times10^{-3}}$ & $1.34\times10^{-2}$ & $3.51\times10^{-2}$ \\
\bottomrule
\end{tabular}
}
\par\smallskip
\footnotesize Bold and underlined entries denote the smallest and second-smallest RE values in each row, respectively.
\end{table}

Table~\ref{tab:velocity_re_t6_alpha06} reports the framewise relative errors at $\alpha=0.6$ over the six-frame horizon. Tensor DS yields the smallest error at every step. 
Vector DS is close to second-best in the early frames, while GR is typically second-best in the later frames. Nevertheless, both remain consistently less accurate than Tensor DS. POD-LD and Tucker capture part of the large-scale structure but are less accurate over the reconstruction horizon. SRS is less effective for this turbulent velocity field.

\medskip \noindent
\textbf{Qualitative comparison.}

\begin{figure}[ht!]
    \centering
    \includegraphics[width=1\linewidth]{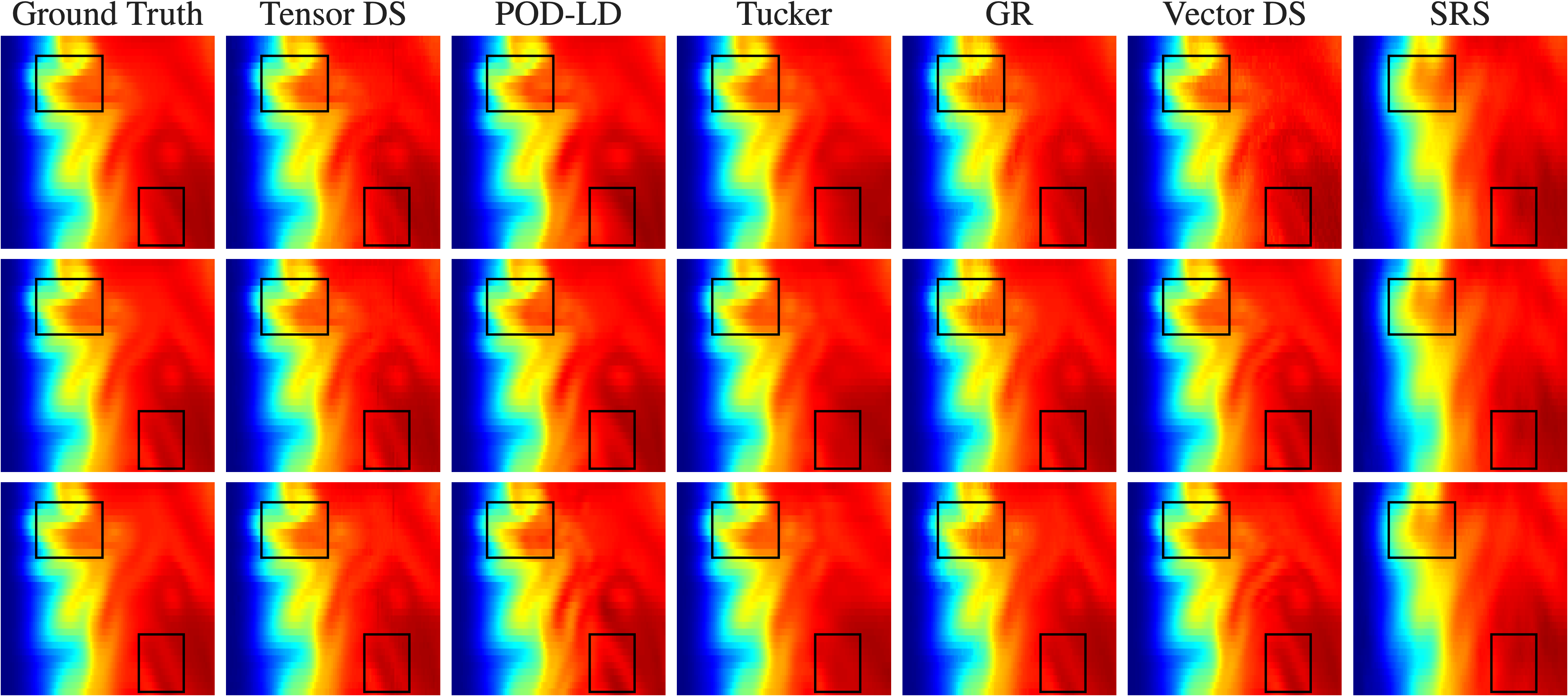}
    \caption{Visual comparison on velocity data from Frames 61, 63 and 65, with highlighted regions of interest. Tensor DS remains visually closest to the ground truth throughout the reconstruction horizon and preserves both the large-scale velocity-magnitude pattern and finer localized texture. POD-LD, Tucker, and GR recover much of the global transition pattern but tend to smooth or weaken small-scale variations, while Vector DS loses some localized texture and SRS is more strongly oversmoothed.}
    \label{fig:velocity_visual_compare_box}
\end{figure}

Figure~\ref{fig:velocity_visual_compare_box} presents qualitative comparisons over Frames 61, 63, and 65. Tensor DS remains visually closest to the ground truth throughout the reconstruction horizon. It preserves the main large-scale transition pattern together with finer local texture inside the highlighted ROIs. POD-LD, Tucker, and GR recover much of the global structure but tend to smooth or weaken small-scale variations. Vector DS is competitive in the early framewise RE table, but the visual comparison shows that it loses part of the localized texture in the highlighted regions. SRS is more strongly oversmoothed and does not preserve the localized texture inside the highlighted regions. These visual observations are consistent with the quantitative metrics and show that Tensor DS better preserves both global flow organization and localized turbulent features.

\subsection{Summary of numerical findings}

The synthetic experiments confirm the two main theory-facing predictions of the paper. The phase transition diagrams support the blockwise sampling scale governed by the aggregated dimensions $d_\lambda$, while the temporal stability experiment illustrates that identifiability and numerical stability are distinct once the Vandermonde factor is inverted. The real-data experiments then show that the same tensor dynamical modeling principle remains useful beyond the exact diagonalizable setting used in the theory.

On both SST and turbulence velocity data, Tensor DS gives stable and structurally faithful reconstruction performance among the tested methods. The improvement is not only quantitative, as measured by SNR, SSIM, MAE, and RE, but also structural. Tensor DS better preserves coherent warm-band geometry in the SST experiment and localized texture in the velocity experiment. 
The experiments also show that the relative advantage depends on the sampling level and the smoothness of the dataset. At very low sampling rates, the empirical observation system can be poorly conditioned, so strongly regularized low-rank or graph-spectral baselines may be competitive. On the smooth SST field, the static graph-spectral SRS baseline is competitive because the dominant SST component is well aligned with a low-frequency graph representation, although the ROI comparison still reveals loss of localized thermal contrast. On the more textured velocity field, Tensor DS separates more clearly from graph-spectral smoothing baselines. The full Vector DS baseline is useful for comparison because it tests the effect of vectorizing the field and learning a high-dimensional homogeneous dynamics, and its weaker SST behavior highlights the importance of preserving tensor-valued structure rather than relying only on full vectorization. This is consistent with the role of the empirical Gram and observation matrices in the theory.

\section{Conclusion}

We developed a tensor dynamical sampling framework for third-order tensor-valued systems evolving under the t-product. The main point is that the Fourier-domain slicewise representation does not by itself describe the recovery geometry. Shared eigenvalues across frontal slices create aggregated eigenspace blocks, and these blocks determine the correct deterministic frame conditions, the spatial sampling burden, and the randomized recovery scale.

The randomized theory shows that the sampling rate is governed by intrinsic quantities of these aggregated blocks, including their dimensions, incoherence, and population conditioning. The stability analysis further shows that temporal depth has a dual role. It is needed for temporal separation, but it may also worsen numerical conditioning through the Vandermonde factor when the spectral magnitudes are unbalanced.

The numerical experiments support these conclusions. Synthetic phase transitions track the predicted blockwise sampling thresholds, while the SST and turbulence experiments show that the learned t-product system operator can improve reconstruction in practical spatio-temporal datasets. They also indicate that tensor dynamics are most effective once the sampling level is high enough to produce an informative recovery system, and that carefully tuned graph-spectral baselines can be strong on very smooth data. Future work includes extending the theory to more general tensor products, nonstationary or time-varying tensor dynamics, and adaptive sampling strategies that exploit the aggregated spectral geometry directly.

\section{Proofs of the main results}

We now give the proofs of the main deterministic, randomized, and stability results. The order follows the development of the paper: we first prove the structural factorization underlying the deterministic theory, then the temporal decoupling and randomized Gram estimates, and finally the temporal conditioning statements.

\subsection{Proof of Theorem \ref{thm:stacked}}

We begin with an auxiliary factorization.
\begin{proposition}\label{prop:decomp}
For each fixed $j\in[p]$, the stacked matrix $M_j(T)$ admits 
\begin{equation*}
    M_j(T)Q = \rb{\mathsf{V}_T(\Lambda)\otimes I_{mn}}
\textrm{blkdiag}\rb{B_{j,\lambda_1},\ldots,B_{j,\lambda_R}},
\end{equation*}
where $Q=WP$ is defined after \eqref{eq:reordering1}, the blocks $B_{j,\lambda}$ are defined in \eqref{eq:Bblock}, and $\mathsf{V}_T(\Lambda)$ is the Vandermonde matrix
given by
\begin{equation*}
    \mathsf{V}_T(\Lambda) := \qm{ 1 &\cdots & 1\\
\lambda_1 & \cdots & \lambda_R\\
\vdots & & \vdots\\
\lambda_1^{T-1} & \cdots & \lambda_R^{T-1} } \in \C^{T\times R}.
\end{equation*}
\end{proposition}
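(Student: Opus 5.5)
The plan is to verify the identity one time block at a time and then read off the Kronecker structure. By \eqref{eq:prelim-Mj}, the $t$-th row block of $M_j(T)Q$ is $A_3(j)A_1(t)Q$, so the task reduces to computing this product for each $t=0,\ldots,T-1$ and comparing it with the $t$-th row block of the right-hand side.

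First I would rewrite $A_1(t)Q$ using \eqref{eq:decomp}. Since $Q=WP$ and $P$ is a permutation matrix,
\begin{equation*}
A_1(t)Q=W\Delta_tW^{-1}WP=W\Delta_tP=(WP)(P^{-1}\Delta_tP)=Q\,\textrm{blkdiag}\rb{\lambda_1^tI_{d_{\lambda_1}},\ldots,\lambda_R^tI_{d_{\lambda_R}}},
\end{equation*}
where the last equality is \eqref{eq:reordering1}. Multiplying on the left by $A_3(j)$ and using the column partition \eqref{eq:Bblock} of $A_3(j)Q$, the $t$-th row block becomes
\begin{equation*}
A_3(j)A_1(t)Q=\qm{\lambda_1^tB_{j,\lambda_1} & \cdots & \lambda_R^tB_{j,\lambda_R}}.
\end{equation*}

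Next I would identify this with the $t$-th row block of $\rb{\mathsf V_T(\Lambda)\otimes I_{mn}}\textrm{blkdiag}\rb{B_{j,\lambda_1},\ldots,B_{j,\lambda_R}}$. The $(t,r)$ block of $\mathsf V_T(\Lambda)\otimes I_{mn}$ is $\lambda_r^tI_{mn}$, indexing rows from $t=0$. The block-diagonal factor has $r$-th diagonal block $B_{j,\lambda_r}\in\C^{mn\times d_{\lambda_r}}$. So the $(t,r)$ block of the product is $\lambda_r^tI_{mn}B_{j,\lambda_r}=\lambda_r^tB_{j,\lambda_r}$. Its row partition ($T$ blocks of height $mn$) and column partition ($R$ blocks of widths $d_{\lambda_r}$) agree with those of $M_j(T)Q$, so stacking over $t$ proves the claimed factorization.

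There is no real obstacle here. The only point that needs care is bookkeeping: checking that the column ordering induced by $P$ in \eqref{eq:reordering1} is the same ordering used to define the blocks $B_{j,\lambda}$ in \eqref{eq:Bblock}. It also uses the convention $\lambda^0=1$, including when $\lambda=0$, so that the $t=0$ row of $\mathsf V_T(\Lambda)$ matches $A_1(0)=I$.
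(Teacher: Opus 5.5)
Your proposal is correct and follows the paper's proof essentially line for line. Both rewrite $A_1(t)Q=Q\,\textrm{blkdiag}(\lambda_1^tI_{d_{\lambda_1}},\ldots,\lambda_R^tI_{d_{\lambda_R}})$ using \eqref{eq:decomp}, \eqref{eq:reordering1} and $Q=WP$, partition $A_3(j)Q$ into the blocks $B_{j,\lambda}$, and read off the Kronecker structure from the stacked row blocks; your explicit insertion of $PP^{-1}$ and the remarks that $P$ is the same for every $t$ and that $\lambda^0=1$ are details the paper leaves implicit.
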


\begin{proof}
Recall that $M_j(T)$ is the stacked matrix in \eqref{eq:prelim-Mj}, while $A_1(t)$ and $A_3(j)$ are defined in \eqref{eq:prelim-A1} and \eqref{eq:prelim-A3}.
Note that
\begin{equation}\label{pf:M1}
    M_j(T)Q = \qm{
A_3(j)A_1(0)Q\\
A_3(j)A_1(1)Q\\
\vdots\\
A_3(j)A_1(T-1)Q}.
\end{equation}
Using \eqref{eq:decomp}, \eqref{eq:reordering1}, and $Q=WP$, we have
\begin{equation*}
    A_3(j)A_1(t)Q = A_3(j)(W\Delta_t W^{-1})(WP)=A_3(j)W\Delta_t P = A_3(j)Q\, \textrm{blkdiag}\rb{\lambda_1^t I_{d_{\lambda_1}},\ldots,\lambda_R^t I_{d_{\lambda_R}}}.
\end{equation*}
Now partitioning $A_3(j)Q$ as in \eqref{eq:Bblock}, we obtain
\begin{align*}
    A_3(j)A_1(t)Q &= \qm{ B_{j,\lambda_1} & \cdots & B_{j,\lambda_R} }  \textrm{blkdiag}\rb{\lambda_1^t I_{d_{\lambda_1}},\ldots,\lambda_R^t I_{d_{\lambda_R}}} = \qm{ \lambda_1^t B_{j,\lambda_1} & \cdots & \lambda_R^t B_{j,\lambda_R} }.
\end{align*}

Substituting this identity into \eqref{pf:M1} gives
\begin{equation*}
M_j(T)Q = \qm{
B_{j,\lambda_1} & \cdots & B_{j,\lambda_R}\\
\lambda_1 B_{j,\lambda_1} & \cdots & \lambda_R B_{j,\lambda_R}\\
\vdots & & \vdots\\
\lambda_1^{T-1} B_{j,\lambda_1} & \cdots & \lambda_R^{T-1} B_{j,\lambda_R}}.
\end{equation*}
By the definition of the Kronecker product, the desired result follows immediately.
\end{proof}

\begin{proof}[Proof of Theorem~\ref{thm:stacked}]
By Proposition~\ref{prop:decomp}, we have
\begin{equation}\label{pf:decomp2}
    M_j(T)Q =
    \bigl(\mathsf{V}_T(\Lambda)\otimes I_{mn}\bigr)
    \operatorname{blkdiag}\!\left(
    B_{j,\lambda_1},\ldots,B_{j,\lambda_R}
    \right).
\end{equation}
Since $Q$ is invertible, right multiplication by $Q$ preserves column rank. Hence $M_j(T)$ has full column rank if and only if $M_j(T)Q$ has full column rank.

We first prove that $(i)\Rightarrow (ii)$. Suppose that $M_j(T)$ has full column rank. 
We claim that each block $B_{j,\lambda_\ell}$ has full column rank. Suppose, to the contrary, that for some $\ell\in\{1,\ldots,R\}$, there exists a nonzero vector $z_\ell$ such that $B_{j,\lambda_\ell}z_\ell=0$.
Define
\begin{equation*}
    z=
    \begin{bmatrix}
    0 & \cdots & 0 & z_\ell & 0 & \cdots & 0
    \end{bmatrix}^{\top},
\end{equation*}
where $z_\ell$ occupies the $\ell$-th block. Then $z\neq 0$ and
\begin{equation*}
    \operatorname{blkdiag}\!\left(
    B_{j,\lambda_1},\ldots,B_{j,\lambda_R}
    \right)z=0.
\end{equation*}
Using \eqref{pf:decomp2}, we obtain
\begin{equation*}
    M_j(T)Qz=0.
\end{equation*}
This contradicts the full column rank of $M_j(T)Q$. Therefore, each $B_{j,\lambda_\ell}$ has full column rank.

Next, we show that
\begin{equation*}
    \ker\bigl(\mathsf{V}_T(\Lambda)\otimes I_{mn}\bigr) \cap
    \operatorname{range}\!\left(
    \operatorname{blkdiag}\!\left(
    B_{j,\lambda_1},\ldots,B_{j,\lambda_R}
    \right) \right) = \{0\}.
\end{equation*}
Let
\begin{equation*}
    g \in \ker\bigl(\mathsf{V}_T(\Lambda)\otimes I_{mn}\bigr)
    \cap \operatorname{range}\!\left(
    \operatorname{blkdiag}\!\left(
    B_{j,\lambda_1},\ldots,B_{j,\lambda_R}
    \right)
    \right).
\end{equation*}
Since $g$ belongs to the range of the block diagonal matrix, there exists a vector $z$ such that
\begin{equation*}
    g= \operatorname{blkdiag}\!\left(
    B_{j,\lambda_1},\ldots,B_{j,\lambda_R}
    \right)z.
\end{equation*}
Since $g\in \ker(\mathsf{V}_T(\Lambda)\otimes I_{mn})$, we have
\begin{equation*}
    \bigl(\mathsf{V}_T(\Lambda)\otimes I_{mn}\bigr)g=0.
\end{equation*}
Therefore,
\begin{equation*}
    \bigl(\mathsf{V}_T(\Lambda)\otimes I_{mn}\bigr)
    \operatorname{blkdiag}\!\left(
    B_{j,\lambda_1},\ldots,B_{j,\lambda_R}
    \right)z=0.
\end{equation*}
By \eqref{pf:decomp2}, this gives $M_j(T)Qz=0.$
Since $M_j(T)Q$ has full column rank, it follows that $z=0$, and hence $g=0$. This proves the desired intersection condition.

We now prove $(ii)\Rightarrow(i)$. Suppose that each $B_{j,\lambda_\ell}$ has full column rank and that
\begin{equation*}
    \ker\bigl(\mathsf{V}_T(\Lambda)\otimes I_{mn}\bigr)
    \cap \operatorname{range}\!\left(
    \operatorname{blkdiag}\!\left(
    B_{j,\lambda_1},\ldots,B_{j,\lambda_R}
    \right) \right) = \{0\}.
\end{equation*}
We show that $M_j(T)Q$ has full column rank. Let $z$ satisfy $M_j(T)Qz=0$.
Using \eqref{pf:decomp2}, we obtain
\begin{equation*}
    \bigl(\mathsf{V}_T(\Lambda)\otimes I_{mn}\bigr)
    \operatorname{blkdiag}\!\left(
    B_{j,\lambda_1},\ldots,B_{j,\lambda_R}
    \right)z=0.
\end{equation*}
Set
\begin{equation*}
    g= \operatorname{blkdiag}\!\left(
    B_{j,\lambda_1},\ldots,B_{j,\lambda_R}
    \right)z.
\end{equation*}
Then
\begin{equation*}
    g\in \operatorname{range}\!\left(
    \operatorname{blkdiag}\!\left(
    B_{j,\lambda_1},\ldots,B_{j,\lambda_R}
    \right)\right)
\end{equation*}
and
\begin{equation*}
    g\in \ker\bigl(\mathsf{V}_T(\Lambda)\otimes I_{mn}\bigr).
\end{equation*}
By the assumed intersection condition, we have $g=0$. Hence
\begin{equation*}
    \operatorname{blkdiag}\!\left(
    B_{j,\lambda_1},\ldots,B_{j,\lambda_R}
    \right)z=0.
\end{equation*}
Since each $B_{j,\lambda_\ell}$ has full column rank, the block diagonal matrix also has full column rank. Therefore $z=0$. Thus $M_j(T)Q$ has full column rank. Since $Q$ is invertible, $M_j(T)$ has full column rank as well.
\end{proof}

\subsection{Proof of Corollary \ref{coro:conjecture}}

Assume that $\Xcal$ is uniquely recoverable from $\Psi_{\Xcal}$. Suppose, on the contrary, that there exists $j_0\in[p]$ such that $\Omega_{j_0}=\varnothing$. Then
\begin{equation*}
[P_\Omega]_{i,j_0,k}=0,
\qquad i\in[m],\ k\in[n].
\end{equation*}
Hence $[\widehat{P_\Omega}]_{\ell,j_0,:}=0$ for all $\ell\in [m]$, and therefore by \eqref{eq:prelim-A2},
\begin{equation*}
[A_2(j_0)]_{:,:,\,\ell}=\frac{1}{n} \operatorname{circ}\rb{[\widehat{P_\Omega}]_{\ell,j_0,:}}=0, \qquad \ell\in[m].
\end{equation*}
It follows from \eqref{eq:prelim-A3} that $A_3(j_0)=0$. Consequently, $A_3(j_0)Q=0$. By the block decomposition \eqref{eq:Bblock}, this means 
\begin{equation*}
B_{j_0,\lambda}=0, \qquad \lambda\in\Lambda,
\end{equation*}
contradicting Theorem~\ref{thm:stacked}. Therefore $\Omega_j\ne\varnothing$ for every $j\in[p]$. \qed

\subsection{Proof of Proposition \ref{prop:temp-regroup}}

Fix $j\in[p]$ and $(i,k)\in\Omega_j$. By the inverse Fourier transform along the third mode, we have
\begin{equation*}
y_t(i,k) = [\Acal^t * \Xcal]_{i,j,k} =  \frac{1}{n}\sum_{s=1}^n \wn^{(s-1)(k-1)} [\widehat{\Acal^t * \Xcal}]_{i,j,s}.
\end{equation*}
Since the t-product becomes slicewise multiplication in the Fourier domain,
\begin{equation*}
[\widehat{\Acal^t * \Xcal}]_{:,:,s} = [\widehat{\Acal}]_{:,:,s}^{\,t}[\widehat{\Xcal}]_{:,:,s}, \qquad s\in[n].
\end{equation*}
Hence $[\widehat{\Acal^t * \Xcal}]_{i,j,s} = e_i^* [\widehat{\Acal}]_{:,:,s}^{\,t} [\widehat{\Xcal}]_{:,j,s}$. By using \eqref{eq:diag} and \eqref{eq:spec1}, and by the definitions of $U_{\lambda,s}$ and $W_{\lambda,s}$,
\begin{equation*}
U_s D_s^t U_s^{-1} = \sum_{\lambda\in\Lambda_s}\lambda^t U_{\lambda,s}W_{\lambda,s}.
\end{equation*}
Therefore
\begin{align*}
[\widehat{\Acal^t * \Xcal}]_{i,j,s}
&= e_i^* \sum_{\lambda\in\Lambda_s}\lambda^t U_{\lambda,s}W_{\lambda,s} [\widehat{\Xcal}]_{:,j,s}= \sum_{\lambda\in\Lambda_s}\lambda^t e_i^*U_{\lambda,s}W_{\lambda,s}[\widehat{\Xcal}]_{:,j,s}= \sum_{\lambda\in\Lambda_s}\lambda^t \ab{ U_{\lambda,s}^* e_i,\; W_{\lambda,s}[\widehat{\Xcal}]_{:,j,s} }.
\end{align*}
Substituting this into the inverse Fourier expansion gives
\begin{align*}
y_t(i,k) &= \sum_{s=1}^n \frac{1}{n}\,\wn^{(s-1)(k-1)}
\sum_{\lambda\in\Lambda_s}\lambda^t \ab{ U_{\lambda,s}^* e_i,\; W_{\lambda,s}[\widehat{\Xcal}]_{:,j,s} }\\
&= \sum_{\lambda\in\Lambda}\lambda^t \sum_{s\in\mathcal I_\lambda} \frac{1}{n}\,\wn^{(s-1)(k-1)} \ab{ U_{\lambda,s}^* e_i,\; W_{\lambda,s}[\widehat{\Xcal}]_{:,j,s} }.
\end{align*}
This is exactly \eqref{sys1} with coefficients given by \eqref{eq:g-lambda-coord}. \qed

\subsection{Proof of Theorem \ref{thm:det}}

We first prove sufficiency. Assume that the condition of Theorem~\ref{thm:det} holds. It remains to show that the observed samples determine $\Xcal$ uniquely. Equivalently, we shall prove that if
\begin{equation*}
[\Acal^t * \Xcal]_{i,j,k}=0,
~\textnormal{for all~} (i,j,k)\in\Omega,\quad t=0,1,\ldots,T-1,
\end{equation*}
then $\Xcal=0$.

Fix $j\in[p]$ and $(i,k)\in\Omega_j$.
Let $\Lambda = \cb{\lambda_1,\ldots,\lambda_R}$ be the set of $R$ distinct eigenvalues across all slices $[\widehat{\Acal}]_{:,:,s}$, $s\in [n]$. Then by Proposition \ref{prop:temp-regroup}, for each $t= 0,1,\ldots,T-1$,
\begin{equation*}
0=\sum_{r=1}^R \lambda_r^t\, g_{\lambda_r}(i,k),
\end{equation*}
where
\begin{equation*}
g_{\lambda_r}(i,k) = \sum_{ s\in \mathcal{I}_{\lambda_r}} \frac{1}{n}\, \omega_n^{(s-1)(k-1)} \ab{ U_{\lambda_r,s}^* e_i,\; W_{\lambda_r,s} [\widehat{\Xcal}]_{:,j,s}}.
\end{equation*}
and $\mathcal{I}_\lambda$ is defined in \eqref{C1}. Equivalently,
\begin{equation*}
\mathsf{V}_T(\Lambda) \qm{g_{\lambda_1}(i,k)\\ \vdots\\ g_{\lambda_R}(i,k)}=0.
\end{equation*}
Since $T\ge |\Lambda|=R$ and the eigenvalues in $\Lambda$ are distinct, the Vandermonde matrix $\mathsf{V}_T(\Lambda)$ has full column rank. Hence
\begin{equation*}
g_{\lambda_r}(i,k)=0, \qquad r=1,\ldots,R.
\end{equation*}

We now rewrite these coefficients in the invariant form used in the theorem statement. By the definitions of $U_{\lambda,s}$ and $W_{\lambda,s}$,
\begin{equation*}
U_{\lambda,s}=U_sP_{\lambda,s}, \qquad W_{\lambda,s}=P_{\lambda,s}U_s^{-1}
\quad \text{on } \ran\rb{P_{\lambda,s}},
\end{equation*}
and therefore, for each $\lambda\in\Lambda$,
\begin{equation}\label{sum1}
    \sum_{ s\in \mathcal{I}_{\lambda}}   \frac{1}{n}\, \omega_n^{(s-1)(k-1)} \ab{  P_{\lambda,s} U_s^* e_i , U_s^{-1} [\widehat{\mathcal{X}}]_{:,j,s}}=0,\quad (i,k)\in \Omega_j,
\end{equation}
or equivalently, $\litab{V^{(i,k)}_\lambda, X^j_\lambda} =0$ for all $(i,k)\in \Omega_j$, where
\begin{equation}\label{vec1}
    V^{(i,k)}_\lambda:=\bigoplus_{s\in \mathcal{I}_\lambda} \omega_n^{-(s-1)(k-1)} P_{\lambda,s} U_{s}^* e_i,\quad X^j_\lambda :=\bigoplus_{s\in \mathcal{I}_\lambda} P_{\lambda,s} U_s^{-1}[\widehat{\mathcal{X}}]_{:,j,s}.
\end{equation}
Since, for each $j\in [p]$, the set of vectors $\big\{V^{(i,k)}_\lambda : (i,k)\in \Omega_j \big\}$ forms a frame for the concatenated eigenspace, given by $\bigoplus_{s\in \mathcal{I}_\lambda} \ran\rb{P_{\lambda,s}}$,
we have $X^j_\lambda =0$ for all $j\in [p]$ and $\lambda\in \Lambda$, i.e.,
\begin{equation*}
    P_{\lambda,s} U_s^{-1} [\widehat{\mathcal{X}}]_{:,j,s} =0,
\end{equation*}
for all $j\in [p]$, $\lambda\in \Lambda$ and $s\in[n]$, Hence, we conclude that $[\widehat{\Xcal}]_{:,j,:} =0$ for all $j\in [p]$, which completes the proof of sufficiency.

We now prove necessity.
Suppose, on the contrary, that there exist $j_0 \in [p]$ and $\lambda_*\in \Lambda$ such that the set of vectors $\big\{V^{(i,k)}_{\lambda_*} : (i,k)\in \Omega_{j_0} \big\}$ does not form a frame for $ \bigoplus_{s\in \mathcal{I}_{\lambda_*}} \ran\rb{P_{\lambda_*,s}}$.
Hence one may choose a nonzero vector
\begin{equation*}
b = \bigoplus_{s\in\mathcal I_{\lambda_*}} b_s
\in \bigoplus_{s\in\mathcal I_{\lambda_*}} \ran\rb{P_{\lambda_*,s}}
\end{equation*}
such that
\begin{equation*}
b \perp \textrm{span} \cb{ V_{\lambda_*}^{(i,k)} : (i,k)\in\Omega_{j_0} } .
\end{equation*}
Define $\Xcal \in \C^{m\times p\times n}$ by
\begin{equation*}
    [\widehat{\Xcal}]_{:,j,s} =
    \begin{cases}
        0, & \text{if } j\ne j_0,\\
        0, & \text{if } j= j_0,\ s \not\in \mathcal{I}_{\lambda_*},\\
        U_s b_s, & \text{if } j= j_0,\ s\in \mathcal{I}_{\lambda_*}.
    \end{cases}
\end{equation*}
Since $b\ne 0$, the tensor $\Xcal$ is nonzero.
On the other hand, by construction, \eqref{sum1} vanishes for all sampled indices. Thus the nonzero tensor $\Xcal$ produces the same observed spatio-temporal samples as the zero tensor, so unique recovery fails, which proves necessity. \qed

\subsection{Proof of Lemma \ref{lem:diag}}

Recall that ${\bf v}_\lambda^{(i,k)}$ is the coordinate sensing vector defined in \eqref{sensing2}, and that $\Sigma_\lambda=n \sum_{i=1}^m\sum_{k=1}^n{\bf v}_\lambda^{(i,k)}({\bf v}_\lambda^{(i,k)})^*$.
Fix $\lambda\in\Lambda$ and write $\mathcal{I}_\lambda=\{s_1,\dots,s_r\}$.
For each $(i,k)$, the reduced sensing vector has block form
\begin{equation*}
    {\bf v}_\lambda^{(i,k)}= \frac{1}{n}
\begin{bmatrix}
\omega_n^{-(s_1-1)(k-1)} U_{\lambda,s_1}^* e_i \\
\vdots \\
\omega_n^{-(s_r-1)(k-1)} U_{\lambda,s_r}^* e_i
\end{bmatrix} \in \C^{d_\lambda\times 1}.
\end{equation*}
Hence the $(s,s')$-block of $\Sigma_\lambda$ equals
\begin{align*}
        (\Sigma_\lambda)_{s,s'} &= \frac{1}{n} \sum_{i=1}^m \sum_{k=1}^n \omega_n^{-(s-s')(k-1)} U_{\lambda,s}^* e_i e_i^* U_{\lambda,s'} \\
        &= \rb{ \frac{1}{n}\sum_{k=1}^n \omega_n^{-(s-s')(k-1)}} U_{\lambda,s}^* \rb{\sum_{i=1}^m e_i e_i^*} U_{\lambda,s'}= \mathbf 1_{\{s=s'\}} U_{\lambda,s}^* U_{\lambda,s'},
\end{align*}
where we used
\begin{equation*}
    \sum_{k=1}^n \omega_n^{-(s-s')(k-1)}=n\,\mathbf 1_{\{s=s'\}}, \quad \sum_{i=1}^m e_i e_i^* = I_m,
\end{equation*}
and $\mathbf 1_{\{s=s'\}}$ equals $1$ if $s=s'$ and $0$ otherwise. This completes the proof. \qed

\subsection{Proof of Theorem \ref{thm:union}}

We use the following standard matrix Chernoff inequality, stated in the form needed below; see \cite[Theorem 1.1]{Tropp2012}.

\begin{thm}[Matrix Chernoff inequality]
Let $X_1,\ldots,X_N$ be independent, random, self-adjoint matrices in $\C^{d\times d}$ such that
\begin{equation*}
\lambda_{\max}(X_\ell)\le R, \qquad \text{almost surely}
\quad \ell=1,\ldots,N.
\end{equation*}
Assume that
\begin{equation*}
\mu_{\min}:=\lambda_{\min}\rb{\sum_{\ell=1}^N \mathbb E[X_\ell]}.
\end{equation*}
Then for every $\eta\in[0,1]$,
\begin{equation*}
\mathbb P\cb{ \lambda_{\min}\rb{\sum_{\ell=1}^N X_\ell}\le (1-\eta)\mu_{\min} } \le d \rb{\frac{e^{-\eta}}{(1-\eta)^{1-\eta}}}^{\mu_{\min}/R}.
\end{equation*}
\end{thm}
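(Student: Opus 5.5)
The statement is the lower-tail matrix Chernoff bound of \cite{Tropp2012}. As written it omits one hypothesis that Tropp's theorem needs: each $X_\ell$ must be positive semidefinite, so that $0\preceq X_\ell$ and $\lambda_{\max}(X_\ell)\le R$. This hypothesis is used below, and it holds in our application, where $X_{i,k}=n\,\delta_{i,j,k}{\bf v}_\lambda^{(i,k)}({\bf v}_\lambda^{(i,k)})^*\succeq 0$. The plan is the standard matrix Laplace-transform argument.

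\emph{Step 1 (Laplace transform).} Write $Y=\sum_\ell X_\ell$. Fix $\theta<0$ and a threshold $s$. If $\lambda_{\min}(Y)\le s$, then $\lambda_{\max}(\theta Y)\ge \theta s$. Since $e^{\lambda_{\max}(\theta Y)}\le \operatorname{tr}\exp(\theta Y)$, Markov's inequality gives
\begin{equation*}
\mathbb P\cb{\lambda_{\min}(Y)\le s}\le e^{-\theta s}\,\mathbb E\operatorname{tr}\exp(\theta Y).
\end{equation*}

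\emph{Step 2 (subadditivity of matrix cumulants).} Use Lieb's concavity theorem: the map $A\mapsto \operatorname{tr}\exp(H+\log A)$ is concave on positive definite matrices. Condition on all summands but one and apply Jensen's inequality; iterating over $\ell$ using independence yields
\begin{equation*}
\mathbb E\operatorname{tr}\exp(\theta Y)\le \operatorname{tr}\exp\Big(\sum_\ell \log \mathbb E\, e^{\theta X_\ell}\Big).
\end{equation*}
I expect this step to be the main obstacle, since it is the only genuinely noncommutative ingredient. I would simply invoke Lieb's theorem, in the form of Tropp's corollary, rather than reprove it.

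\emph{Step 3 (single-summand bound).} The function $x\mapsto e^{\theta x}$ is convex, so on $[0,R]$ it lies below its chord, $e^{\theta x}\le 1+\frac{e^{\theta R}-1}{R}x$. The spectrum of $X_\ell$ lies in $[0,R]$, so the transfer rule gives $e^{\theta X_\ell}\preceq I+\frac{g(\theta)}{R}X_\ell$ with $g(\theta):=e^{\theta R}-1<0$. Take expectations and use $I+A\preceq e^{A}$ together with operator monotonicity of $\log$ to get
\begin{equation*}
\log\mathbb E\, e^{\theta X_\ell}\preceq \frac{g(\theta)}{R}\,\mathbb E X_\ell .
\end{equation*}
Monotonicity of $\operatorname{tr}\exp$ then gives $\operatorname{tr}\exp\big(\frac{g(\theta)}{R}\sum_\ell\mathbb E X_\ell\big)$ as an upper bound. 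Because $g(\theta)<0$, the largest eigenvalue of the exponent is $\frac{g(\theta)}{R}\mu_{\min}$, so the trace is at most $d\,e^{g(\theta)\mu_{\min}/R}$.

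\emph{Step 4 (optimize).} Combining the steps gives
\begin{equation*}
\mathbb P\cb{\lambda_{\min}(Y)\le s}\le d\,e^{-\theta s+g(\theta)\mu_{\min}/R}.
\end{equation*}
Set $s=(1-\eta)\mu_{\min}$ and choose $\theta=R^{-1}\log(1-\eta)<0$, so that $g(\theta)=-\eta$. The exponent becomes
\begin{equation*}
\frac{\mu_{\min}}{R}\bigl(-(1-\eta)\log(1-\eta)-\eta\bigr),
\end{equation*}
which is exactly the claimed bound $d\bigl(e^{-\eta}(1-\eta)^{-(1-\eta)}\bigr)^{\mu_{\min}/R}$. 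The endpoint $\eta=1$ follows by a limiting argument, and $\eta=0$ is trivial.
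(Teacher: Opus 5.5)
Your argument is correct and is exactly the standard Laplace-transform proof of Tropp's Theorem 1.1: Lieb's concavity for subadditivity of cumulants, the chord bound for $e^{\theta x}$ on $[0,R]$, and the choice $\theta=R^{-1}\log(1-\eta)$. The paper does not reprove this result but simply cites it. You are also right that the hypothesis $X_\ell\succeq 0$ is missing from the paper's statement and is genuinely needed, since the lower-tail bound can fail for summands with a large negative part; it does hold for the rank-one summands $n\,\delta_{i,j,k}{\bf v}_\lambda^{(i,k)}({\bf v}_\lambda^{(i,k)})^*$ to which the paper applies it.
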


We first establish the following local concentration estimate.
\begin{theorem}\label{chernoff1}
Assume that the tensor evolution is diagonalizable as in \eqref{eq:diag} and $T \ge |\Lambda|$. Fix $j\in [p]$ and $\lambda \in \Lambda$ and let $\delta_{i,j,k} \sim \mathrm{Bernoulli}(\alpha)$ be independent random variables. Let $G_\lambda^j$ be the empirical Gram matrix defined by
\begin{equation*}
    G_\lambda^j = n \sum_{i=1}^m \sum_{k=1}^n \delta_{i,j,k} \mathbf{v}_\lambda^{(i,k)} (\mathbf{v}_\lambda^{(i,k)})^*.
\end{equation*}
Then for any $\eta \in (0,1)$,
\begin{equation}\label{tail1}
    \mathbb{P} \left\{ \lambda_{\min}(G_\lambda^j) \le (1-\eta) \alpha \gamma_{\lambda, \min} \right\} \le d_\lambda \exp \rb{ -\frac{\eta^2}{3} \frac{\alpha \gamma_{\lambda, \min}}{R_\lambda} },
\end{equation}
where $\gamma_{\lambda, \min} = \lambda_{\min}(\Sigma_\lambda) > 0$ and $R_\lambda = n \max_{(i,k)} \litnorm{\mathbf{v}_\lambda^{(i,k)}}_2^2$ is the uniform spectral bound for a single summand.

Moreover, for any $\varepsilon_\lambda\in (0,1)$, if the sampling rate $\alpha$ satisfies
\begin{equation}\label{bound1}
    \alpha \ge \frac{3 \mu_\lambda d_\lambda}{\eta^2 mn \gamma_{\lambda, \min}} \log \rb{ \frac{d_\lambda}{\varepsilon_\lambda} },
\end{equation}
then the recovery condition holds with high probability
\begin{equation*}
    \mathbb{P} \left\{ \lambda_{\min}(G_\lambda^j) \ge (1-\eta) \alpha \gamma_{\lambda, \min} \right\} \ge 1-\varepsilon_\lambda.
\end{equation*}
\end{theorem}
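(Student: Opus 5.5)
We apply the matrix Chernoff inequality stated above directly to the decomposition $G_\lambda^j=\sum_{i=1}^m\sum_{k=1}^n X_{i,k}$, where $X_{i,k}=n\,\delta_{i,j,k}\,\mathbf v_\lambda^{(i,k)}(\mathbf v_\lambda^{(i,k)})^*$. These are $mn$ summands in $\C^{d_\lambda\times d_\lambda}$.

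\textbf{Hypotheses of the Chernoff inequality.} Since $j$ is fixed, the $\delta_{i,j,k}$ are independent across $(i,k)$, so the $X_{i,k}$ are independent. Each $X_{i,k}$ is self-adjoint and positive semidefinite. Its only nonzero eigenvalue is at most $n\|\mathbf v_\lambda^{(i,k)}\|_2^2\le R_\lambda$. By linearity, $\sum\mathbb E[X_{i,k}]=\alpha\Sigma_\lambda$. Hence $\mu_{\min}=\alpha\lambda_{\min}(\Sigma_\lambda)=\alpha\gamma_{\lambda,\min}$, which is positive by Lemma~\ref{lem:diag}.

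\textbf{Tail bound \eqref{tail1}.} The Chernoff inequality gives the bound $d_\lambda\bigl(e^{-\eta}(1-\eta)^{-(1-\eta)}\bigr)^{\alpha\gamma_{\lambda,\min}/R_\lambda}$. To reach \eqref{tail1}, we use the elementary inequality
\begin{equation*}
-\eta-(1-\eta)\log(1-\eta)\le -\eta^2/2\le-\eta^2/3, \qquad \eta\in(0,1).
\end{equation*}
This follows from the series $(1-\eta)\log(1-\eta)=-\eta+\sum_{k\ge2}\frac{\eta^k}{k(k-1)}\ge -\eta+\eta^2/2$. Exponentiating yields \eqref{tail1}.

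\textbf{High-probability statement.} We need the right-hand side of \eqref{tail1} to be at most $\varepsilon_\lambda$. This holds if and only if $\alpha\ge \frac{3R_\lambda}{\eta^2\gamma_{\lambda,\min}}\log(d_\lambda/\varepsilon_\lambda)$. By the incoherence estimate \eqref{incoherence1}, $R_\lambda\le\mu_\lambda d_\lambda/(mn)$, so condition \eqref{bound1} implies this inequality. The complement event then has probability at least $1-\varepsilon_\lambda$. Strictly, the complement of ``$\le$'' is ``$>$'', which is contained in ``$\ge$'', so the stated bound follows.

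\textbf{Main obstacle.} The only nonmechanical point is the scalar inequality comparing the Chernoff exponent with $\eta^2/3$. Beyond that, we only need to confirm that $R_\lambda$ in \eqref{incoherence1} matches the almost-sure bound required by the Chernoff theorem, which it does by the factor $n$ in its definition.
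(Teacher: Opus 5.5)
Your proposal is correct and follows the paper's proof essentially step for step. You apply the matrix Chernoff inequality to the independent positive semidefinite summands $X_{i,k}$ with $\mu_{\min}=\alpha\gamma_{\lambda,\min}$ and $R=R_\lambda$, weaken the exponent to $\eta^2/3$, and then use $R_\lambda\le \mu_\lambda d_\lambda/(mn)$ from \eqref{incoherence1} to obtain \eqref{bound1}. The only addition is that you prove the scalar inequality $e^{-\eta}(1-\eta)^{-(1-\eta)}\le e^{-\eta^2/2}\le e^{-\eta^2/3}$ via the power series, which the paper simply invokes.
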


\begin{proof}
The empirical Gram matrix $G_\lambda^j$ is a sum of independent Hermitian positive semidefinite matrices $X_{i,k} :=n\, \delta_{i,j,k} \mathbf{v}_\lambda^{(i,k)} (\mathbf{v}_\lambda^{(i,k)})^*$. Since $\mathbb{E}[\delta_{i,j,k}] = \alpha$ for all $(i,j,k)\in [m]\times [p] \times [n]$, the expectation of the sum is given by
\begin{equation*}
    \mathbb{E}[G_\lambda^j] = \alpha n \sum_{i=1}^m \sum_{k=1}^n \mathbf{v}_\lambda^{(i,k)} (\mathbf{v}_\lambda^{(i,k)})^* = \alpha \Sigma_\lambda,
\end{equation*}
which implies that the minimum eigenvalue of the expected sum is $\mu_{\min} := \alpha \gamma_{\lambda, \min}$. Furthermore, each summand $X_{i,k}$ is a rank-one positive semidefinite matrix (and thus self-adjoint) whose maximum eigenvalue is bounded by
\begin{equation*}
    \lambda_{\max}(X_{i,k}) \le n \litnorm{\mathbf{v}_\lambda^{(i,k)}}_2^2 \le R_\lambda
\end{equation*}
for all $i\in[m]$ and $k\in [n]$. 
Applying the matrix Chernoff inequality stated above with $d = d_\lambda$, $R = R_\lambda$, and $\mu_{\min} = \alpha \gamma_{\lambda,\min}$, we obtain
\begin{equation*}
    \mathbb{P} \cb{ \lambda_{\min}(G_\lambda^j) \le (1-\eta) \alpha \gamma_{\lambda,\min} } \le d_\lambda \rb{ \frac{e^{-\eta}}{(1-\eta)^{1-\eta}} }^{ \alpha \gamma_{\lambda,\min} / R_\lambda}.
\end{equation*}
Using the scalar inequality $e^{-\eta} / (1-\eta)^{1-\eta} \le \exp(-\eta^2/3)$ for $\eta \in (0,1)$ and substituting the expression for $\mu_{\min}$, we obtain the desired tail bound in \eqref{tail1}.

Finally, to ensure that the recovery condition holds with probability at least $1-\varepsilon_\lambda$, we set the right-hand side of \eqref{tail1} to be at most $\varepsilon_\lambda$. Solving for $\alpha$ and utilizing the relation $R_\lambda \le \frac{\mu_\lambda d_\lambda}{mn}$ from the definition of incoherence in \eqref{incoherence1}, we arrive at the sufficient condition for the sampling rate provided in \eqref{bound1}. 
\end{proof}

We now prove Theorem~\ref{thm:union}.
  \begin{proof}[The proof of Theorem~\ref{thm:union}]
For each lateral slice $j \in [p]$ and each distinct eigenvalue $\lambda \in \Lambda$, let
\begin{equation*}
    \mathcal{E}_{j,\lambda} := \cb{ \lambda_{\min}(G_\lambda^j) < (1-\eta)\alpha \gamma_{\lambda,\min} }.
\end{equation*}
Applying Theorem~\ref{chernoff1} with $\varepsilon/(p|\Lambda|)$ in place of $\varepsilon$, we obtain
that if \eqref{eq:alpha_uniform} holds, then
\begin{equation*}
    \mathbb{P}(\mathcal{E}_{j,\lambda}) \le d_\lambda \exp \left( -\frac{\eta^2}{3} \frac{\alpha \gamma_{\lambda, \min}}{R_\lambda} \right) \le \frac{\varepsilon}{p |\Lambda|},
\end{equation*}
for every pair $(j, \lambda)$. 

Applying the union bound over all $j\in [p]$ and $\lambda \in \Lambda$, we obtain
\begin{equation*}
    \mathbb{P} \left( \bigcup_{j=1}^p \bigcup_{\lambda \in \Lambda} \mathcal{E}_{j,\lambda} \right) \le \sum_{j=1}^p \sum_{\lambda \in \Lambda} \mathbb{P}(\mathcal{E}_{j,\lambda}) \le p |\Lambda| \cdot \frac{\varepsilon}{p |\Lambda|} = \varepsilon.
\end{equation*}
Hence, with probability at least $1-\varepsilon$,
\begin{equation*}
    \lambda_{\min}(G_\lambda^j)\ge (1-\eta)\alpha\gamma_{\lambda,\min}>0
\end{equation*}
for all $j\in[p]$ and $\lambda\in\Lambda$.
In particular, every $G_\lambda^j$ is invertible.
Therefore each component ${\bf x}_\lambda^j$ is uniquely determined, and hence so is the tensor signal $\Xcal$.   \end{proof}

\subsection{Proof of Proposition \ref{prop:temp-stability}}

Recall that the noisy temporal data are written as
\begin{equation*}
\widetilde{\bf y}^{(i,k)}={\bf y}^{(i,k)}+{\boldsymbol\eta}^{(i,k)},
\qquad
{\bf y}^{(i,k)}=\mathsf V_T(\Lambda){\bf g}^{(i,k)}
\end{equation*}
for each $(i,k)\in\Omega_j$, where ${\bf g}^{(i,k)}$ is defined in \eqref{yg-system}.

Since the eigenvalues in $\Lambda$ are pairwise distinct and $T\ge |\Lambda|$, the Vandermonde matrix $\mathsf{V}_T(\Lambda)$ has full column rank. Hence, for each $(i,k)\in\Omega_j$, the least-squares estimate
\begin{equation*}
\widetilde{\bf g}^{(i,k)}=\mathsf{V}_T(\Lambda)^\dagger \widetilde{\bf y}^{(i,k)}
\end{equation*}
satisfies
\begin{equation*}
\widetilde{\bf g}^{(i,k)}-{\bf g}^{(i,k)} = \mathsf{V}_T(\Lambda)^\dagger {\boldsymbol\eta}^{(i,k)}.
\end{equation*}
Therefore
\begin{equation}\label{pf:temp1}
\norm{\widetilde{\bf g}^{(i,k)}-{\bf g}^{(i,k)}}_2 \le
\norm{\mathsf{V}_T(\Lambda)^\dagger}_2\,\norm{{\boldsymbol\eta}^{(i,k)}}_2 = \frac{1}{\sigma_{\min}\rb{\mathsf{V}_T(\Lambda)}}\norm{{\boldsymbol\eta}^{(i,k)}}_2,
\qquad (i,k)\in\Omega_j.
\end{equation}
Here we used the identity $\norm{\mathsf{V}_T(\Lambda)^\dagger}_2 = 1/\sigma_{\min}\rb{\mathsf{V}_T(\Lambda)}$.

Now fix $\lambda\in\Lambda$. The reconstruction of the $\lambda$-block is obtained from the scalar data $\cb{\widetilde g_\lambda(i,k):(i,k)\in\Omega_j}$.
Since the family $\cb{{\bf v}_\lambda^{(i,k)}:(i,k)\in\Omega_j}$ has lower frame bound $C_\lambda^j$, the associated analysis operator has a bounded left inverse with norm at most $(C_\lambda^j)^{-1/2}$.
Consequently,
\begin{equation}\label{pf:temp2}
\norm{\widetilde{\bf x}_\lambda^j-{\bf x}_\lambda^j}_2 \le
(C_\lambda^j)^{-1/2} \rb{ \sum_{(i,k)\in\Omega_j}
\vb{\widetilde g_\lambda(i,k)-g_\lambda(i,k)}^2 }^{1/2}.
\end{equation}

Finally, for each $(i,k)\in\Omega_j$, the $\lambda$-block of the vector error $\widetilde{\bf g}^{(i,k)}-{\bf g}^{(i,k)}$
is exactly $\widetilde g_\lambda(i,k)-g_\lambda(i,k)$, and therefore
\begin{equation*}
\vb{\widetilde g_\lambda(i,k)-g_\lambda(i,k)} \le \norm{\widetilde{\bf g}^{(i,k)}-{\bf g}^{(i,k)}}_2.
\end{equation*}
Substituting \eqref{pf:temp1} into \eqref{pf:temp2}, we obtain
\begin{align*}
\norm{\widetilde{\bf x}_\lambda^j-{\bf x}_\lambda^j}_2
&\le (C_\lambda^j)^{-1/2} \rb{ \sum_{(i,k)\in\Omega_j}
\norm{\widetilde{\bf g}^{(i,k)}-{\bf g}^{(i,k)}}_2^2 }^{1/2}\le (C_\lambda^j)^{-1/2}
\frac{1}{\sigma_{\min}\rb{\mathsf{V}_T(\Lambda)}} \rb{ \sum_{(i,k)\in\Omega_j} \norm{{\boldsymbol\eta}^{(i,k)}}_2^2 }^{1/2},
\end{align*}
which is exactly the desired estimate. \qed

\subsection{Proof of Theorem \ref{thm:temp-conditioning}}

Recall that $\Lambda=\cb{\lambda_1,\ldots,\lambda_R}$, $R=|\Lambda|$, and that $\rho_-=\min_{\lambda\in\Lambda}\vb{\lambda}$, $\rho_+=\max_{\lambda\in\Lambda}\vb{\lambda}$, $\phi_T$ is defined in \eqref{eq:phiT-def}, and $\Phi_T=\phi_T(\rho_-)/\phi_T(\rho_+)$.

For each $r=1,\ldots,R$, where $R:=|\Lambda|$, let
\begin{equation*}
c_r:=\qm{1 & \lambda_r & \cdots & \lambda_r^{T-1}}^\top
\end{equation*}
denote the $r$th column of $\mathsf V_T(\Lambda)$. Then
\begin{equation*}
\norm{c_r}_2=\rb{\sum_{t=0}^{T-1}\vb{\lambda_r}^{2t}}^{1/2}=\phi_T\rb{\vb{\lambda_r}}.
\end{equation*}
Hence
\begin{equation*}
\min_{1\le r\le R}\norm{c_r}_2=\phi_T(\rho_-), \qquad \max_{1\le r\le R}\norm{c_r}_2=\phi_T(\rho_+).
\end{equation*}

We first prove {\rm(i)}. Since
\begin{equation*}
\sigma_{\min}\rb{\mathsf V_T(\Lambda)}=\min_{\norm{x}_2=1}\norm{\mathsf V_T(\Lambda)x}_2\le \norm{\mathsf V_T(\Lambda)e_r}_2=\norm{c_r}_2
\end{equation*}
for every $r$, we obtain
\begin{equation*}
\sigma_{\min}\rb{\mathsf V_T(\Lambda)}\le \phi_T(\rho_-).
\end{equation*}
Similarly,
\begin{equation*}
\sigma_{\max}\rb{\mathsf V_T(\Lambda)}=\max_{\norm{x}_2=1}\norm{\mathsf V_T(\Lambda)x}_2\ge \norm{\mathsf V_T(\Lambda)e_r}_2=\norm{c_r}_2
\end{equation*}
for every $r$, so
\begin{equation*}
\sigma_{\max}\rb{\mathsf V_T(\Lambda)}\ge \phi_T(\rho_+).
\end{equation*}
Therefore
\begin{equation*}
\frac{1}{\kappa_2\rb{\mathsf V_T(\Lambda)}}=\frac{\sigma_{\min}\rb{\mathsf V_T(\Lambda)}}{\sigma_{\max}\rb{\mathsf V_T(\Lambda)}}\le \frac{\phi_T(\rho_-)}{\phi_T(\rho_+)}=\Phi_T.
\end{equation*}

We next prove {\rm(ii)}. For $T\ge R$, set
\begin{equation*}
S_T(\rho):=\sum_{t=0}^{T-1}\rho^{2t}, \qquad h_T(\rho):=\frac{\rho^{2T}}{S_T(\rho)}.
\end{equation*}
Then
\begin{equation}\label{pf:ratio}
    \frac{\Phi_{T+1}^2}{\Phi_T^2}=\frac{S_{T+1}(\rho_-)}{S_T(\rho_-)}\frac{S_T(\rho_+)}{S_{T+1}(\rho_+)}=\frac{1+h_T(\rho_-)}{1+h_T(\rho_+)}.
\end{equation}
Thus it is enough to show that $h_T(\rho)$ is strictly increasing in $\rho>0$.

Let $0<\rho_1<\rho_2$. Then
\begin{align*}
\rho_1^{2T}S_T(\rho_2)&=\sum_{t=0}^{T-1}\rho_1^{2T}\rho_2^{2t}<\sum_{t=0}^{T-1}\rho_2^{2T}\rho_1^{2t}=\rho_2^{2T}S_T(\rho_1),
\end{align*}
because for each $t=0,\ldots,T-1$, $\rho_1^{2T}\rho_2^{2t}<\rho_2^{2T}\rho_1^{2t}$
and $T-t\ge 1$. Hence
\begin{equation*}
h_T(\rho_1)<h_T(\rho_2).
\end{equation*}
Since $\rho_-<\rho_+$, \eqref{pf:ratio} gives
\begin{equation*}
\frac{\Phi_{T+1}^2}{\Phi_T^2}<1,
\end{equation*}
and therefore $\Phi_{T+1}<\Phi_T$. This proves {\rm(ii)}.

We now prove {\rm(iii)}. First consider the case $\rho_+=1$. Since $\rho_+>\rho_-$, we have $\rho_-<1$, and therefore
\begin{equation*}
\phi_T(\rho_-)\le \rb{\sum_{t=0}^{\infty}\rho_-^{2t}}^{1/2}=\frac{1}{\sqrt{1-\rho_-^2}}, \qquad \phi_T(\rho_+)=\phi_T(1)=\sqrt{T}.
\end{equation*}
Hence
\begin{equation*}
0\le \Phi_T\le \frac{1}{\sqrt{1-\rho_-^2}}\frac{1}{\sqrt{T}}\to 0 \qquad\text{as } T\to\infty.
\end{equation*}

Next assume $\rho_+>1$. Since $\phi_T(\rho_+)\ge \rho_+^{T-1}$, while
\begin{equation*}
\phi_T(\rho_-)\le
\begin{cases}
\dfrac{1}{\sqrt{1-\rho_-^2}}, & 0<\rho_-<1,\\[0.8em]
\sqrt{T}, & \rho_-=1,\\[0.4em]
\sqrt{T}\,\rho_-^{T-1}, & \rho_->1,
\end{cases}
\end{equation*}
we obtain
\begin{equation*}
0\le \Phi_T\le
\begin{cases}
\dfrac{1}{\sqrt{1-\rho_-^2}}\,\rho_+^{-(T-1)}, & 0<\rho_-<1,\\[0.8em]
\sqrt{T}\,\rho_+^{-(T-1)}, & \rho_-=1,\\[0.4em]
\sqrt{T}\,\rb{\dfrac{\rho_-}{\rho_+}}^{T-1}, & \rho_->1.
\end{cases}
\end{equation*}
In every case the right-hand side tends to $0$ as $T\to\infty$, because $\rho_+>1$ and $\rho_-/\rho_+<1$. 
\qed

\subsection{Proof of Corollary \ref{coro:explicit}}
We use the notation $\rho_-$, $\rho_+$, $\phi_T$, and $\Phi_T$ from Theorem~\ref{thm:temp-conditioning}.

Since $0\le \rho_-<1$ and $\rho_+>1$, we have
\begin{align*}
    \phi_T(\rho_-)^2=\sum_{t=0}^{T-1}\rho_-^{2t}\le \frac{1}{1-\rho_-^2},\qquad
    \phi_T(\rho_+)^2=\sum_{t=0}^{T-1}\rho_+^{2t}\ge \rho_+^{2(T-1)},
\end{align*}
which implies that
\begin{equation*}
\Phi_T=\frac{\phi_T(\rho_-)}{\phi_T(\rho_+)}
\le \frac{1}{\sqrt{1-\rho_-^2}}\,\rho_+^{-(T-1)}.
\end{equation*}
If \eqref{eq:Ttau} holds, then $\Phi_T\le \tau$, and the conclusion follows from Theorem~\ref{thm:temp-conditioning} {\rm(i)}.

It remains to justify \eqref{eq:Ttau-rough}. If
\begin{equation*}
\frac{1}{\tau\sqrt{1-\rho_-^2}}\le 1,
\end{equation*}
then \eqref{eq:Ttau} holds automatically for every $T\ge R$. Otherwise, using
\begin{equation*}
\log \rho_+ \ge \frac{\rho_+-1}{\rho_+},
\end{equation*}
the bound \eqref{eq:Ttau-rough} implies
\begin{equation*}
(T-1)\log \rho_+ \ge \log\rb{\frac{1}{\tau\sqrt{1-\rho_-^2}}},
\end{equation*}
which is equivalent to \eqref{eq:Ttau}. \qed

\medskip\noindent
\section*{Acknowledgments}
The NOAA Optimum Interpolation (OI) SST V2 data used in this study were provided by the NOAA Physical Sciences Laboratory, Boulder, Colorado, USA, from the \href{https://psl.noaa.gov}{NOAA PSL website}. The forced isotropic turbulence data were obtained from the Johns Hopkins Turbulence Database (JHTDB), available at \href{https://turbulence.idies.jhu.edu/datasets/homogeneousTurbulence/isotropic}{the JHTDB forced isotropic turbulence dataset page}. The research is in part supported by NSF Grant DMS-2603463.

\printbibliography

\appendix

\section{Jordan structure across frontal slices in Fourier domain}\label{app:jordan}

In this appendix, we extend the deterministic identifiability theory beyond the diagonalizable setting. In the presence of Jordan blocks, temporal samples are no longer governed solely by powers $\lambda^t$ of distinct eigenvalues, but also involve additional polynomial factors induced by the nilpotent structure. Consequently, the relevant recovery spaces are no longer individual eigenspaces but aggregated generalized eigenspaces. Accordingly, the frame condition in Theorem~\ref{thm:det} must be replaced by a higher-order condition that captures the contribution of the nilpotent components in the Jordan decomposition.

 For each frontal slice $[\widehat{\Acal}]_{:,:,s}$, we consider its Jordan canonical form. That is, for each $s\in[n]$, there exists an invertible matrix $U_s$ such that
\begin{equation}\label{eq:jordan}
[\widehat{\Acal}]_{:,:,s}=U_sJ_sU_s^{-1},
\end{equation}
where $J_s$ is in Jordan canonical form.

Let $\Lambda$ denote the set of distinct eigenvalues appearing across all
slices. For each $s\in[n]$ and $\lambda\in\Lambda_s$, let $E_{\lambda,s}$ denote the coordinate projection onto the Jordan blocks of $J_s$
corresponding to $\lambda$, and define the generalized eigenspace
\begin{equation*}
\mathcal{J}_{\lambda,s}: =\ker\big((J_s-\lambda I)^{\nu_{\lambda,s}}\big)=\ran(E_{\lambda,s}),
\end{equation*}
where $\nu_{\lambda,s}$ is the size of the largest Jordan block of $J_s$ corresponding to $\lambda$ in slice $s$.

We also define the nilpotent operator
\begin{equation*}
N_{\lambda,s}:=(J_s-\lambda I)E_{\lambda,s},
\end{equation*}
which satisfies $N_{\lambda,s}^{\nu_{\lambda,s}}=0$.
For each $\lambda\in\Lambda$, let
\begin{equation*}
\nu_\lambda:=\max_{s\in\mathcal{I}_\lambda}\nu_{\lambda,s}.
\end{equation*}

Thus, compared with the diagonalizable case, one must recover not only the
spectral amplitudes associated with each $\lambda$, but also the additional
nilpotent directions generated by the Jordan structure.
The following theorem characterizes the identifiability of the  signal
in the presence of Jordan blocks.

\begin{theorem}\label{thm:jordan}
Assume
\begin{equation*}
T\ge \sum_{\lambda\in\Lambda}\nu_\lambda.
\end{equation*}
The signal $\Xcal$ is uniquely recoverable from $\Psi_\Xcal$ if and only if, for each $j\in[p]$ and each $\lambda\in\Lambda$, the collection
\begin{equation*}
\Big\{V_{\lambda,r}^{(i,k)}:(i,k)\in\Omega_j,\ 0\le r\le \nu_\lambda-1\Big\}
\end{equation*}
forms a frame for $\bigoplus_{s\in\mathcal{I}_\lambda}\mathcal{J}_{\lambda,s},$
where
\begin{equation*}
V_{\lambda,r}^{(i,k)} := \bigoplus_{s\in\mathcal{I}_\lambda} \omega_n^{-(s-1)(k-1)} E_{\lambda,s}(N_{\lambda,s}^*)^rU_s^*e_i.
\end{equation*}

\end{theorem}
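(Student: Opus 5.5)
The proof follows the template of Theorem~\ref{thm:det}: establish the Jordan analogue of Proposition~\ref{prop:temp-regroup}, remove the temporal part by injectivity of a confluent Vandermonde matrix, and reduce identifiability to a spatial spanning condition on each aggregated generalized eigenspace. By linearity, unique recovery is equivalent to the implication that $[\Acal^t*\Xcal]_{i,j,k}=0$ for all $(i,j,k)\in\Omega$ and $0\le t\le T-1$ forces $\Xcal=0$. For the first step, fix $j$ and $(i,k)\in\Omega_j$ and use the inverse DFT along the third mode together with \eqref{eq:jordan}:
\begin{equation*}
[\Acal^t*\Xcal]_{i,j,k}=\frac1n\sum_{s=1}^n \wn^{(s-1)(k-1)}\, e_i^*U_sJ_s^tU_s^{-1}[\widehat\Xcal]_{:,j,s}.
\end{equation*}
On $\ran(E_{\lambda,s})$ we have $J_s=\lambda I+N_{\lambda,s}$. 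Because $N_{\lambda,s}$ commutes with $E_{\lambda,s}$ and is nilpotent of index $\nu_{\lambda,s}$, the binomial theorem gives $J_s^tE_{\lambda,s}=\sum_{r=0}^{\nu_{\lambda,s}-1}\binom{t}{r}\lambda^{t-r}N_{\lambda,s}^rE_{\lambda,s}$. Inserting $I=\sum_{\lambda\in\Lambda_s}E_{\lambda,s}$ and regrouping by distinct $\lambda$ as in Proposition~\ref{prop:temp-regroup} then yields
\begin{equation*}
y_t(i,k)=\sum_{\lambda\in\Lambda}\sum_{r=0}^{\nu_\lambda-1}\binom{t}{r}\lambda^{t-r}\, g_{\lambda,r}(i,k),
\qquad
g_{\lambda,r}(i,k)=\frac1n\sum_{s\in\mathcal I_\lambda}\wn^{(s-1)(k-1)}\Big\langle E_{\lambda,s}(N_{\lambda,s}^*)^rU_s^*e_i,\;E_{\lambda,s}U_s^{-1}[\widehat\Xcal]_{:,j,s}\Big\rangle .
\end{equation*}
Here terms with $r\ge\nu_{\lambda,s}$ vanish, and the $\frac1n$ and conjugated phase are absorbed into the inner product. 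Thus $g_{\lambda,r}(i,k)=\frac1n\langle V^{(i,k)}_{\lambda,r},X^j_\lambda\rangle$ with $X^j_\lambda:=\bigoplus_{s\in\mathcal I_\lambda}E_{\lambda,s}U_s^{-1}[\widehat\Xcal]_{:,j,s}$.

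The second step is temporal separation, which I expect to be the main obstacle. The vector $(y_t(i,k))_{t<T}$ equals $\mathsf C_T\,\mathbf g^{(i,k)}$, where $\mathsf C_T\in\C^{T\times\sum_\lambda\nu_\lambda}$ is the confluent Vandermonde matrix with columns $\big(\binom{t}{r}\lambda^{t-r}\big)_{t}$. I would show it has full column rank when $T\ge\sum_\lambda\nu_\lambda$ by a generating-function argument. A kernel vector $c$ means that the sequence $a_t=\sum_{\lambda,r}c_{\lambda,r}\binom{t}{r}\lambda^{t-r}$ vanishes for $t<T$. This sequence satisfies the linear recurrence whose characteristic polynomial $\prod_\lambda(z-\lambda)^{\nu_\lambda}$ has degree $\sum_\lambda\nu_\lambda\le T$, so $a_t=0$ for all $t$. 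Linear independence of the quasi-polynomial sequences $\binom{t}{r}\lambda^{t-r}$ then forces $c=0$. A separate point is needed when $\lambda=0$ is an eigenvalue: then $\binom{t}{r}0^{t-r}$ is the indicator $\delta_{t,r}$, and these sequences are still independent of the others by the same recurrence argument. The upshot is that the samples vanish if and only if $g_{\lambda,r}(i,k)=0$ for all $\lambda$, all $r<\nu_\lambda$, and all $(i,k)\in\Omega_j$.

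The third step closes both directions. For sufficiency, the frame hypothesis says the vectors $V^{(i,k)}_{\lambda,r}$ span $\bigoplus_s\mathcal J_{\lambda,s}$. Since $X^j_\lambda$ lies in this space and is orthogonal to all of them, $X^j_\lambda=0$. Hence $E_{\lambda,s}U_s^{-1}[\widehat\Xcal]_{:,j,s}=0$ for every $\lambda\in\Lambda_s$; summing over $\lambda$ gives $U_s^{-1}[\widehat\Xcal]_{:,j,s}=0$, so $\Xcal=0$. For necessity, suppose the frame condition fails for some $j_0$ and $\lambda_*$. Pick a nonzero $b=\bigoplus b_s\in\bigoplus_s\mathcal J_{\lambda_*,s}$ orthogonal to every $V^{(i,k)}_{\lambda_*,r}$. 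Define $[\widehat\Xcal]_{:,j_0,s}=U_sb_s$ for $s\in\mathcal I_{\lambda_*}$ and set all other Fourier slices to zero, exactly as in the proof of Theorem~\ref{thm:det}. Then $X^{j_0}_{\lambda_*}=b$, $X^{j}_{\lambda}=0$ for all other pairs, and every $g_{\lambda,r}$ vanishes, so this nonzero tensor produces zero samples. Necessity does not use the bound on $T$.
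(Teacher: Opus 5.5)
Your proposal is correct and follows essentially the same route as the paper. Both arguments use the binomial expansion of $J_s^t$ on each generalized eigenspace, regroup into the coefficients $g_{\lambda,r}(i,k)=\frac1n\langle V_{\lambda,r}^{(i,k)},X_\lambda^j\rangle$, invoke injectivity of the confluent Vandermonde matrix for $T\ge\sum_{\lambda}\nu_\lambda$, and close both directions with the orthogonal-complement argument, using the same counterexample $[\widehat\Xcal]_{:,j_0,s}=U_sb_s$ for necessity.

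The differences are minor. You sketch a justification of the confluent Vandermonde step via the recurrence with characteristic polynomial $\prod_\lambda(z-\lambda)^{\nu_\lambda}$, including the $\lambda=0$ case, whereas the paper simply asserts it. You also correctly observe that necessity does not use the bound on $T$.
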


The diagonalizable and Jordan results above provide deterministic characterizations of injectivity in terms of frame conditions on the corresponding aggregated spectral spaces.  In practice, however, one would like to know when these conditions hold under random spatial subsampling. 

This appendix completes the deterministic extension to non-diagonalizable slice dynamics. 
The randomized theory in the main text is stated in the diagonalizable setting, where the frame geometry admits a cleaner Gram-matrix formulation.

\begin{proof}
We use the Jordan notation introduced above. Thus $[\widehat{\Acal}]_{:,:,s}=U_sJ_sU_s^{-1}$, $E_{\lambda,s}$ is the coordinate projection onto the Jordan blocks for $\lambda$, $N_{\lambda,s}=(J_s-\lambda I)E_{\lambda,s}$, and $\nu_\lambda=\max_{s\in\mathcal I_\lambda}\nu_{\lambda,s}$.

We prove sufficiency by showing that the measurement operator has a trivial null space.
Suppose that $\Xcal$ produces zero measurements, i.e.
\begin{equation*}
[\Acal^t*\Xcal]_{i,j,k}=0,
\qquad (i,j,k)\in\Omega,\ t=0,1,\dots,T-1.
\end{equation*}

Proceeding as in the proof of Theorem~\ref{thm:det}, we obtain
\begin{equation*}
0= \sum_{s=1}^n\frac{1}{n}\omega_n^{(s-1)(k-1)}
\langle e_i,[\widehat{\Acal}]_{:,:,s}^t[\widehat{\Xcal}]_{:,j,s}\rangle.
\end{equation*}
Using $[\widehat{\Acal}]_{:,:,s}^t=U_sJ_s^tU_s^{-1}$ and setting
\begin{equation*}
z_{j,s}:=U_s^{-1}[\widehat{\Xcal}]_{:,j,s},
\end{equation*}
this becomes
\begin{equation*}
0= \sum_{s=1}^n\frac{1}{ n}\omega_n^{(s-1)(k-1)}
\langle U_s^*e_i,J_s^t z_{j,s}\rangle.
\end{equation*}

Fix $s$. Since $J_s$ is in Jordan form, we have the block decomposition
\begin{equation*}
J_s=\sum_{\lambda\in\Lambda_s}(\lambda I+N_{\lambda,s})E_{\lambda,s}.
\end{equation*}
Using the binomial expansion on each $\lambda$-block,
\begin{equation*}
(\lambda I+N_{\lambda,s})^t = \sum_{r=0}^{\nu_{\lambda,s}-1}\binom{t}{r}\lambda^{t-r}N_{\lambda,s}^r,
\end{equation*}
where $\binom{t}{r}$ with $r>t$ are understood to be zero.
Since $N_{\lambda,s} E_{\lambda,s} = E_{\lambda,s} N_{\lambda,s}$ and $E_{\lambda,s} E_{\lambda',s}=0$ for $\lambda \ne \lambda'$, it follows
\begin{align*}
    J_s^t &=  \sum_{\lambda\in\Lambda_s}(\lambda I+N_{\lambda,s})^t E_{\lambda,s} = \sum_{\lambda\in\Lambda_s}\sum_{r=0}^{\nu_{\lambda,s}-1} \binom{t}{r}\lambda^{t-r}N_{\lambda,s}^rE_{\lambda,s}
\end{align*}

Substituting into the measurement equation and regrouping by distinct eigenvalues yields
\begin{equation*}
0=\sum_{\lambda\in\Lambda}\sum_{r=0}^{\nu_\lambda-1}\binom{t}{r}\lambda^{t-r}\,
g_{\lambda,r}(i,k),
\qquad t=0,1,\dots,T-1,
\end{equation*}
where
\begin{equation*}
g_{\lambda,r}(i,k) :=
\sum_{s\in\mathcal{I}_\lambda}\frac{1}{n}\omega_n^{(s-1)(k-1)}
\langle U_s^*e_i,\,N_{\lambda,s}^rE_{\lambda,s}z_{j,s}\rangle.
\end{equation*}
Equivalently, by moving $N_{\lambda,s}^r$ to the left,
\begin{equation*}
g_{\lambda,r}(i,k)
=
\sum_{s\in\mathcal{I}_\lambda}\frac{1}{ n}\omega_n^{(s-1)(k-1)}
\langle (N_{\lambda,s}^*)^rU_s^*e_i,\,E_{\lambda,s}z_{j,s}\rangle.
\end{equation*}

Since $T\ge\sum_{\lambda\in\Lambda}\nu_\lambda$, the family of functions
$\{\binom{t}{r}\lambda^{t-r}\}_{\lambda\in\Lambda,\,0\le r\le\nu_\lambda-1}$
is linearly independent on $\{0,1,\dots,T-1\}$ (equivalently, the associated
confluent Vandermonde matrix is invertible). Therefore
\begin{equation*}
g_{\lambda,r}(i,k)=0
\qquad \text{for all }\lambda\in\Lambda,\ 0\le r\le\nu_\lambda-1,\ (i,k)\in\Omega_j.
\end{equation*}

Define, for each $\lambda\in\Lambda$,
\begin{equation*}
X_\lambda^j
:=
\bigoplus_{s\in\mathcal{I}_\lambda}E_{\lambda,s}z_{j,s}
\in
\bigoplus_{s\in\mathcal{I}_\lambda}\mathcal{J}_{\lambda,s}.
\end{equation*}
By definition of $V_{\lambda,r}^{(i,k)}$ in the theorem, we have
\begin{equation*}
\langle V_{\lambda,r}^{(i,k)},X_\lambda^j\rangle =
\sum_{s\in\mathcal{I}_\lambda}\omega_n^{(s-1)(k-1)}
\langle E_{\lambda,s}(N_{\lambda,s}^*)^rU_s^*e_i,\,E_{\lambda,s}z_{j,s}\rangle
= n \, g_{\lambda,r}(i,k),
\end{equation*}
where we used $E_{\lambda,s}=E_{\lambda,s}^*$ and $E_{\lambda,s}^2=E_{\lambda,s}$.
Hence
\begin{equation*}
\langle V_{\lambda,r}^{(i,k)},X_\lambda^j\rangle=0
\qquad (i,k)\in\Omega_j,\ 0\le r\le\nu_\lambda-1.
\end{equation*}

Denote this frame family by
\begin{equation*}
\mathcal V_{\lambda,j}:=\cb{V_{\lambda,r}^{(i,k)}:(i,k)\in\Omega_j,\ 0\le r\le\nu_\lambda-1}.
\end{equation*}
By the frame assumption in the theorem, the collection $\mathcal V_{\lambda,j}$ forms a frame for
$\bigoplus_{s\in\mathcal{I}_\lambda}\mathcal{J}_{\lambda,s}$, and therefore the only vector
orthogonal to all frame elements is the zero vector. Thus
\begin{equation*}
X_\lambda^j=0
\qquad \text{for all }\lambda\in\Lambda.
\end{equation*}
In particular, $E_{\lambda,s}z_{j,s}=0$ for all $\lambda\in\Lambda_s$. Since
$\sum_{\lambda\in\Lambda_s}E_{\lambda,s}=I$, we conclude that $z_{j,s}=0$, hence
$[\widehat{\Xcal}]_{:,j,s}=U_sz_{j,s}=0$ for all $j$ and $s$. Therefore $\Xcal=0$.

We now prove necessity.
Suppose that the condition of Theorem~\ref{thm:jordan} fails.
Then there exist $j_0\in[p]$ and $\lambda_*\in\Lambda$ such that
\begin{equation*}
\mathcal V_{\lambda_*,j_0} = \cb{V_{\lambda_*,r}^{(i,k)} : (i,k)\in\Omega_{j_0},\ 0\le r\le \nu_{\lambda_*}-1}
\end{equation*}
does not form a frame for
\begin{equation*}
\bigoplus_{s\in\mathcal I_{\lambda_*}}\mathcal J_{\lambda_*,s}.
\end{equation*}
Hence one may choose a nonzero vector
\begin{equation*}
b=\bigoplus_{s\in\mathcal I_{\lambda_*}} b_s
\in \bigoplus_{s\in\mathcal I_{\lambda_*}}\mathcal J_{\lambda_*,s}
\end{equation*}
such that $b \perp \textrm{span} \rb{\mathcal{V}_{\lambda_*,j_0}}$.

Define $\Xcal\in\C^{m\times p\times n}$ by
\begin{equation*}
[\widehat{\Xcal}]_{:,j,s} =
\begin{cases}
0, & j\ne j_0,\\
0, & j=j_0,\ s\notin\mathcal I_{\lambda_*},\\
U_s b_s, & j=j_0,\ s\in\mathcal I_{\lambda_*}.
\end{cases}
\end{equation*}
Then $\Xcal\ne 0$, and
\begin{equation*}
X_{\lambda_*}^{j_0} = \bigoplus_{s\in\mathcal I_{\lambda_*}}E_{\lambda_*,s}b_s = b, \qquad
X_\lambda^{j_0}=0 \ \text{for } \lambda\ne\lambda_*.
\end{equation*}
Therefore,
\begin{equation*}
\langle V_{\lambda_*,r}^{(i,k)},X_{\lambda_*}^{j_0}\rangle = \langle V_{\lambda_*,r}^{(i,k)},b \rangle =0
\end{equation*}
for every $(i,k)\in\Omega_{j_0}$ and every $0\le r\le \nu_{\lambda_*}-1$.
Equivalently, all coefficients $g_{\lambda,r}(i,k)$ vanish on the sampled set.
Hence
\begin{equation*}
[\Acal^t * \Xcal]_{i,j,k}=0,
\qquad (i,j,k)\in\Omega,\quad t=0,1,\ldots,T-1.
\end{equation*}
Thus the nonzero tensor $\Xcal$ produces the same observed spatio-temporal
samples as the zero tensor, and unique recovery fails.
This proves necessity.
\end{proof}

\section{Conditioning across slices and basis scaling}\label{app:scaling}

In this appendix we clarify how the population conditioning parameter $\gamma_{\lambda,\min}$ depends on the choice of slice-wise eigenbases.
This issue is relevant for the randomized sampling bounds, since the parameter $\gamma_{\lambda,\min}$ enters directly into the lower bound for the sampling rate $\alpha$.

Fix $\lambda\in\Lambda$ and write
\begin{equation*}
\mathcal I_\lambda=\cb{s_1,\ldots,s_r}.
\end{equation*}
By Lemma~\ref{lem:diag},
\begin{equation*}
\Sigma_\lambda=\mathrm{diag}\rb{U_{\lambda,s_1}^*U_{\lambda,s_1},\ldots,U_{\lambda,s_r}^*U_{\lambda,s_r}},
\end{equation*}
and hence
\begin{equation*}
\gamma_{\lambda,\min}=\lambda_{\min}\rb{\Sigma_\lambda}
=\min_{s\in\mathcal I_\lambda}\lambda_{\min}\rb{U_{\lambda,s}^*U_{\lambda,s}}
=\min_{s\in\mathcal I_\lambda}\sigma_{\min}^2\rb{U_{\lambda,s}}.
\end{equation*}
In particular, $\gamma_{\lambda,\min}>0$ if and only if each $U_{\lambda,s}$ has full column rank.

This identity shows that $\gamma_{\lambda,\min}$ is not purely a subspace quantity.
Indeed, for each $s\in\mathcal I_\lambda$, replacing $U_{\lambda,s}$ by
\begin{equation*}
\widetilde U_{\lambda,s}=U_{\lambda,s}D_s,
\end{equation*}
where $D_s$ is any invertible diagonal matrix of size $m_{\lambda,s}\times m_{\lambda,s}$, does not change the eigenspace $\ran\rb{U_{\lambda,s}}$.

In particular, without fixing a normalization, one can make $\gamma_{\lambda,\min}$ arbitrarily small by a purely artificial diagonal rescaling.
For example, if
\begin{equation*}
D_s(\varepsilon):=\diag\rb{\varepsilon,1,\ldots,1},
\qquad 0<\varepsilon\ll 1,
\end{equation*}
then
\begin{equation*}
\sigma_{\min}\rb{U_{\lambda,s}D_s(\varepsilon)}
\le \norm{U_{\lambda,s}D_s(\varepsilon)e_1}_2
=\varepsilon \norm{U_{\lambda,s}e_1}_2 \to 0
\qquad \text{as } \varepsilon\to 0.
\end{equation*}
Hence the corresponding value of $\gamma_{\lambda,\min}$, and therefore the lower bound for the sampling rate $\alpha$, may become arbitrarily pessimistic under such a choice of scaling.

A simple way to remove this trivial scaling freedom is to normalize the columns of $U_{\lambda,s}$ to unit Euclidean norm.
Writing $u_{s,1},\ldots,u_{s,m_{\lambda,s}}$ for the columns of $U_{\lambda,s}$, define
\begin{equation*}
\widehat D_s:=\diag\rb{\norm{u_{s,1}}_2^{-1},\ldots,\norm{u_{s,m_{\lambda,s}}}_2^{-1}},
\qquad
\widehat U_{\lambda,s}:=U_{\lambda,s}\widehat D_s.
\end{equation*}
Then the columns of $\widehat U_{\lambda,s}$ all have unit Euclidean norm.
Moreover, any other diagonal scaling with unit-norm columns has the form
$\widehat U_{\lambda,s}J_s$, where $J_s$ is diagonal and each diagonal entry has modulus one.
Hence the singular values of the normalized matrix are independent of the remaining phase or sign freedom.

We use the following $2$-norm special case of a diagonal-scaling estimate of van der Sluis
\cite[Theorem~3.5(b)]{vandersluis1969}, stated in the form needed here.

\begin{theorem}[{\cite[Theorem~3.5(b)]{vandersluis1969}}]
Let $U\in\C^{M\times r}$ have full column rank, and let $\widehat D$ be an invertible diagonal matrix such that the columns of $U\widehat D$ have equal Euclidean norms.
Then
\begin{equation*}
\kappa_2\rb{U\widehat D} \le \sqrt r \inf\cb{\kappa_2\rb{UD}:D\in\C^{r\times r}\text{ is invertible diagonal}}.
\end{equation*}
\end{theorem}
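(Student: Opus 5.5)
The plan is to reduce the statement to the $2$-norm conditioning of Gram matrices and then compare $U\widehat D$ with an arbitrary diagonal scaling $UD$ one column at a time. First I would remove trivial freedom. Replacing $D$ by $D\Theta$ with $\Theta$ diagonal unitary, or by $cD$ with $c>0$, leaves $\kappa_2(UD)$ unchanged. Hence it is enough to compare $U\widehat D$, normalized so that all its columns have unit norm, with $UD$ for an arbitrary invertible diagonal $D$ with positive entries. Write $V:=U\widehat D$; its columns are $v_1,\ldots,v_r$ with $\norm{v_\ell}_2=1$. Then $UD=VE$ with $E:=\widehat D^{-1}D$ diagonal and invertible, say $E=\diag(e_1,\ldots,e_r)$ with $e_\ell>0$.

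Next I would bound $\sigma_{\max}(V)$ and $\sigma_{\min}(V)$ separately in terms of $VE$.

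\emph{Upper bound.} Since $\sigma_{\max}(V)\le \norm{V}_F=\sqrt r$, and since $\sigma_{\max}(VE)\ge \norm{VEe_\ell}_2=e_\ell$ for every $\ell$, we get $\sigma_{\max}(VE)\ge \max_\ell e_\ell$. Together these give
\begin{equation*}
\sigma_{\max}(V)\le \sqrt r\,\frac{\sigma_{\max}(VE)}{\max_\ell e_\ell}.
\end{equation*}

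\emph{Lower bound.} For unit $x$ set $y:=E^{-1}x$. Then
\begin{equation*}
\norm{Vx}_2=\norm{VEy}_2\ge \sigma_{\min}(VE)\norm{E^{-1}x}_2\ge \sigma_{\min}(VE)/\max_\ell e_\ell,
\end{equation*}
so $\sigma_{\min}(V)\ge \sigma_{\min}(VE)/\max_\ell e_\ell$.

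Dividing the upper bound by the lower bound, the factor $\max_\ell e_\ell$ cancels, and $\kappa_2(V)\le\sqrt r\,\kappa_2(VE)=\sqrt r\,\kappa_2(UD)$. Taking the infimum over $D$ gives the claim. The hypothesis that $U\widehat D$ has equal column norms, rather than unit norms, is handled by the scalar invariance noted at the start.

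The main obstacle I expect is finding the right pairing of the two one-sided estimates. Each estimate loses a factor depending on $E$, and the losses must cancel. This works only because both bounds are expressed through the same quantity $\max_\ell e_\ell$: the upper bound through a column-norm lower estimate of $\sigma_{\max}(VE)$, and the lower bound through $\norm{E^{-1}}_2$. One could instead try a bound in terms of $\min_\ell e_\ell$ on both sides, but then the $\sigma_{\max}$ step does not close. So I would commit to the $\max_\ell e_\ell$ pairing. The only remaining point to check is that $\sigma_{\max}(V)\le\sqrt r$ uses nothing beyond unit column norms, which the Frobenius bound ensures.
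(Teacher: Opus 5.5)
Your proof is correct and complete. The Frobenius bound $\sigma_{\max}(V)\le\sqrt r$, the column estimate $\sigma_{\max}(VE)\ge\max_\ell e_\ell$, and the bound $\norm{E^{-1}x}_2\ge 1/\norm{E}_2$ for unit $x$ combine exactly as you describe, so the factor $\max_\ell e_\ell$ cancels. Full column rank is what makes $\sigma_{\min}(A)=\min_{\norm{x}_2=1}\norm{Ax}_2$ positive and lets it replace $1/\norm{A^{-1}}_2$ in this rectangular setting.

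The paper gives no proof of its own here; it only cites van der Sluis. Your argument is essentially the classical one behind that result.

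One cosmetic point: choosing $D$ with positive entries does not make $E=\widehat D^{-1}D$ positive unless you also absorb the phases of $\widehat D$. Nothing depends on this, since you can work with $\vb{e_\ell}$ throughout.
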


Applying the theorem with $U=U_{\lambda,s}$ and $r=m_{\lambda,s}$ gives
\begin{equation*}
\kappa_2\rb{\widehat U_{\lambda,s}} \le \sqrt{m_{\lambda,s}}
\inf\cb{\kappa_2\rb{U_{\lambda,s}D_s}:D_s\in\C^{m_{\lambda,s}\times m_{\lambda,s}}\text{ is invertible diagonal}}.
\end{equation*}
Thus normalizing the eigenvectors to have unit Euclidean norm is not merely a cosmetic convention.
It gives a diagonally rescaled eigenbasis whose $2$-norm condition number is within the standard factor $\sqrt{m_{\lambda,s}}$ of the best possible diagonal rescaling.

Consequently, in the randomized estimates one should interpret $\gamma_{\lambda,\min}$ after fixing such a normalization convention for the slice-wise eigenbases.
Under this convention, a small value of $\gamma_{\lambda,\min}$ reflects nonorthogonality or near-dependence in the chosen eigenspace representation, rather than arbitrary rescaling of individual eigenvectors.
In the normal or unitary diagonalizable case, one may choose the eigenvectors orthonormally, and then
\begin{equation*}
U_{\lambda,s}^*U_{\lambda,s} = I_{m_{\lambda,s}},
\qquad
\gamma_{\lambda,\min} = 1,
\end{equation*}
so this basis-scaling issue disappears.

\end{document}